\documentclass[10pt,reqno,twoside]{amsart}

\usepackage[
  paperwidth=6.73228in,
  paperheight=9.68504in,
  inner=0.748in,
  outer=0.667in,
  top=0.743in,
  bottom=0.725in,
  includehead,
  headheight=10pt,
  headsep=0.19in
]{geometry}

\usepackage{amsmath,amssymb,mathtools}
\usepackage{mathrsfs}
\usepackage[T1]{fontenc}
\usepackage{lmodern}
\usepackage{microtype}
\usepackage{enumitem}
\usepackage{fancyhdr}
\usepackage[colorlinks=true,citecolor=blue,linkcolor=blue,urlcolor=blue]{hyperref}

\setlist{topsep=4pt,itemsep=1pt,parsep=0pt,partopsep=0pt}
\makeatletter
\renewcommand{\section}{%
  \@startsection{section}{1}{\z@}%
    {-2.75ex plus -.5ex minus -.2ex}%
    {1.35ex plus .2ex}%
    {\normalfont\normalsize\bfseries}}
\renewcommand{\subsection}{%
  \@startsection{subsection}{2}{\z@}%
    {-2.35ex plus -.4ex minus -.2ex}%
    {1.05ex plus .2ex}%
    {\normalfont\normalsize\itshape}}
\renewcommand{\subsubsection}{%
  \@startsection{subsubsection}{3}{\z@}%
    {-2ex plus -.4ex minus -.2ex}%
    {.8ex plus .2ex}%
    {\normalfont\normalsize\itshape}}
\makeatother

\fancypagestyle{plain}{%
  \fancyhf{}
  }

\newcommand{\N}{\mathbb N}

\newcommand{\dd}{\mathop{}\!\mathrm d}
\newcommand{\ind}{\mathbf 1}
\renewcommand{\Re}{\operatorname{Re}}
\renewcommand{\Im}{\operatorname{Im}}
\newcommand{\A}{\mathcal A}
\newcommand{\Hankel}{\mathcal H}

\title{Helson Forms and Integral Operators on Bergman Spaces of Dirichlet Series}

\author{Tengfei Ma}
\address{School of Mathematics Sciences, Dalian University of Technology, Dalian, Liaoning 116024, P. R. China}
\email{503304283@qq.com}

\author{Yufeng Lu}
\address{School of Mathematics Sciences, Dalian University of Technology, Dalian, Liaoning 116024, P. R. China}
\email{lyfdlut@dlut.edu.cn}

\author{Chao Zu\textsuperscript{*}}
\address{School of Mathematics Sciences, Dalian University of Technology, Dalian, Liaoning 116024, P. R. China}
\email{zuchao@dlut.edu.cn}
\thanks{\textsuperscript{*}Corresponding author.}

\subjclass[2020]{Primary 47B35; Secondary 30B50, 47A10}
\keywords{Weighted Helson form, multiplicative Hilbert matrix, Bergman space of Dirichlet series, spectral analysis}

\date{}

\newtheorem{theorem}{Theorem}[section]
\newtheorem{lemma}[theorem]{Lemma}

\theoremstyle{remark}
\newtheorem{remark}[theorem]{Remark}

\begin{document}

\begin{abstract}
We study a family of operators \(T_\alpha\), \(\alpha>1\), on Bergman
spaces of Dirichlet series. With respect to the natural orthonormal basis,
these operators are represented by weighted multiplicative Hilbert matrices
involving the generalized divisor coefficients \(d_\alpha\). We prove that
their essential and absolutely continuous spectra are both
\([0,\Lambda_\alpha]\), with absolutely continuous spectrum of multiplicity
one. The singular continuous spectrum is empty, and there are no eigenvalues
in \([0,\Lambda_\alpha]\). Above \(\Lambda_\alpha\), there are only finitely
many eigenvalues, and all of them are simple. Moreover, there is a unique
\(\alpha_*\in(1,2)\) such that no eigenvalues occur above \(\Lambda_\alpha\)
for \(1<\alpha\leq\alpha_*\), while such eigenvalues exist for every
\(\alpha>\alpha_*\). We also prove a general spectral result for weighted
integral Hankel operators with kernels
\(w(x)b(x+y)\overline{w(y)}\). We obtain further boundedness and compactness
results for the corresponding weighted Helson forms, including a
characterization of boundedness for forms induced by finite positive
measures.
\end{abstract}

\maketitle

\section{Introduction}

The Hilbert matrix is a classical non-compact Hankel operator. Magnus proved
that its spectrum is \([0,\pi]\) and that it has no eigenvalues, while
Rosenblum gave an explicit diagonalization and showed that its spectrum is
purely absolutely continuous with multiplicity one; see
\cite{Magnus1950,Rosenblum1958I,Rosenblum1958II}. The corresponding
integral operator is the Carleman operator with kernel \((x+y)^{-1}\).

A multiplicative analogue appears in the theory of Dirichlet series.
Instead of matrices whose entries depend on the additive index \(m+n\),
one considers matrices whose entries depend on the product \(mn\).
These are known as Helson matrices, or multiplicative Hankel matrices.
They arise naturally on Hardy spaces of Dirichlet series; see
\cite{HedenmalmLindqvistSeip1997} for the basic Hilbert space setting and
\cite{Helson2006,Helson2010} for Helson forms in this context.

A basic example is the multiplicative Hilbert operator on
\[
  \mathscr H_0^2
  =
  \left\{
    f(s)=\sum_{n\geq2}a_nn^{-s}:
    \sum_{n\geq2}|a_n|^2<\infty
  \right\}.
\]
The operator was introduced in \cite{BrevigEtAl2016} and is given by
\[
  (Hf)(s)
  =
  \int_{1/2}^{\infty}f(t)\bigl(\zeta(s+t)-1\bigr)\dd t.
\]
With respect to the orthonormal basis \((n^{-s})_{n\geq2}\) of
\(\mathscr H_0^2\), its matrix is
\[
  \left(
    \frac{1}{\sqrt{mn}\,\log(mn)}
  \right)_{m,n\geq2}.
\]
Combining the results of
\cite{BrevigEtAl2016,PerfektPushnitski2018Spectrum}
gives the following description of its spectrum.

\medskip
\noindent\textbf{Theorem A.}
The operator \(H\) is bounded and strictly positive on \(\mathscr H_0^2\),
and
\[
  \|H\|=\pi.
\]
Moreover,
\[
  \sigma(H)=\sigma_{\mathrm{ac}}(H)=[0,\pi],
  \qquad
  \sigma_{\mathrm{sc}}(H)=\varnothing,
  \qquad
  \sigma_{\mathrm p}(H)=\varnothing,
\]
and the absolutely continuous spectrum has multiplicity one.
\medskip

We now turn to weighted Bergman spaces of Dirichlet series. For
\(\alpha>1\), let \(d_\alpha\) be defined by
\[
  \zeta(s)^\alpha
  =
  \sum_{n=1}^{\infty}d_\alpha(n)n^{-s},
  \qquad \operatorname{Re}s>1.
\]
 The corresponding Hilbertian Bergman space \(\A_\alpha^2\) is given by
\[
  \left\|\sum_{n\geq1}a_nn^{-s}\right\|_{\A_\alpha^2}^2
  =
  \sum_{n\geq1}\frac{|a_n|^2}{d_\alpha(n)},
\]
and its reproducing kernel is
\[
  K_w(s)=\zeta(s+\overline w)^\alpha.
\]
Weighted multiplicative Hankel forms on Bergman spaces of Dirichlet series
were studied in \cite{BayartEtAl2019}. In the case \(\alpha=2\), one of the
examples considered there is the bounded non-compact form
\[
  (f,g)
  \longmapsto
  \int_{1/2}^{\infty}
  f(t)g(t)\left(t-\frac12\right)\dd t,
\]
whose symbol has coefficients
\[
  \frac{d(n)}{\sqrt n\,(\log n)^2}.
\]
The relation between multiplicative Hankel forms and Volterra operators on
weighted Hilbert spaces of Dirichlet series was further studied by
Bommier-Hato \cite{BommierHato2020}.

Let \(\A_{\alpha,0}^2\) denote the subspace of \(\A_\alpha^2\) consisting
of functions with zero constant term. Its reproducing kernel is
\(\zeta(s+\overline w)^\alpha-1\). On this space, consider the operator
initially defined on Dirichlet polynomials by
\[
  (T_\alpha f)(s)
  =
  \int_{1/2}^{\infty}
  f(t)\bigl(\zeta(s+t)^\alpha-1\bigr)
  \left(t-\frac12\right)^{\alpha-1}\dd t.
\]
The boundedness of \(T_\alpha\) follows from the same argument used for the
Hardy-space operator in \cite{BrevigEtAl2016}. Olsen's local embedding
theorem for Hilbert spaces of Dirichlet series \cite{Olsen2011}, together
with point-evaluation estimates away from the boundary, shows that
\[
  \dd\mu_\alpha(t)
  =
  \left(t-\frac12\right)^{\alpha-1}\dd t,
  \qquad t>\frac12,
\]
is a Carleson measure for \(\A_{\alpha,0}^2\). Using the reproducing kernel,
we therefore obtain, for every Dirichlet polynomial \(f\) with zero
constant term,
\[
  \langle T_\alpha f,f\rangle_{\A_\alpha^2}
  =
  \int_{1/2}^{\infty}
  |f(t)|^2\left(t-\frac12\right)^{\alpha-1}\dd t.
\]
Hence \(T_\alpha\) extends to a bounded non-negative operator on
\(\A_{\alpha,0}^2\).

If
\[
  e_n(s)=\sqrt{d_\alpha(n)}\,n^{-s},
  \qquad n\geq2,
\]
then \((e_n)_{n\geq2}\) is an orthonormal basis of
\(\A_{\alpha,0}^2\), and the matrix of \(T_\alpha\) with respect to this
basis is
\[
  \left(
    \frac{\Gamma(\alpha)\sqrt{d_\alpha(m)d_\alpha(n)}}
    {\sqrt{mn}\,(\log(mn))^\alpha}
  \right)_{m,n\geq2}.
\]
Apart from the factor \(\Gamma(\alpha)\), this is the weighted
multiplicative Hankel matrix associated with the sequence
\[
  \frac{d_\alpha(n)}
  {\sqrt n\,(\log n)^\alpha},
  \qquad n\geq2.
\]
At the formal endpoint \(\alpha=1\), the matrix reduces to the
multiplicative Hilbert matrix. For \(\alpha=2\), the corresponding
bilinear form is exactly the bounded non-compact multiplicative Hankel
form considered in \cite{BayartEtAl2019}.

Thus \(T_\alpha\) may be viewed as a weighted Bergman analogue of the
multiplicative Hilbert operator in Theorem A. We ask which of its spectral
properties remain valid for \(T_\alpha\) and how the spectrum depends on
\(\alpha\).

The Hardy-space case does not immediately apply here, because both the
underlying Hilbert space and the matrix entries depend on \(\alpha\). In
particular, it remains to determine the continuous spectrum, whether it
contains eigenvalues, and when discrete eigenvalues occur above it.

Our main result is the following.

\begin{theorem}\label{thm:complete-spectrum-Ta}
Let \(\alpha>1\) and define
\[
  \Lambda_\alpha
  =
  \frac{\Gamma(\alpha/2)^2}{\Gamma(\alpha)}.
\]
Then the following assertions hold.
\begin{enumerate}
\item \(T_\alpha\) is bounded and non-negative, and
\[
  \ker T_\alpha=\{0\}.
\]
\item Its essential, absolutely continuous, and singular continuous spectra
satisfy
\[
  \sigma_{\mathrm{ess}}(T_\alpha)
  =
  \sigma_{\mathrm{ac}}(T_\alpha)
  =
  [0,\Lambda_\alpha],
  \qquad
  \sigma_{\mathrm{sc}}(T_\alpha)=\varnothing,
\]
and the absolutely continuous spectrum has multiplicity one.
\item The discrete spectrum above the essential spectrum is finite:
\[
  N\bigl((\Lambda_\alpha,\infty);T_\alpha\bigr)<\infty.
\]
Every eigenvalue of \(T_\alpha\) in
\((\Lambda_\alpha,\infty)\) is simple.
\item There exists a unique number \(\alpha_*\in(1,2)\) such that
\[
  \sigma_{\mathrm p}(T_\alpha)
  \cap(\Lambda_\alpha,\infty)=\varnothing,
  \qquad 1<\alpha\leq\alpha_*,
\]
whereas
\[
  \sigma_{\mathrm p}(T_\alpha)
  \cap(\Lambda_\alpha,\infty)\neq\varnothing,
  \qquad \alpha>\alpha_*.
\]
\item $T_\alpha$ has no eigenvalues in its essential spectrum::
\[
  \sigma_{\mathrm p}(T_\alpha)
  \cap[0,\Lambda_\alpha]=\varnothing.
\]
\end{enumerate}
\end{theorem}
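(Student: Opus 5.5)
The plan is to factor \(T_\alpha\) through the Carleson embedding and then analyse the resulting integral operator on \(L^2(0,\infty)\). Let \(J\colon \A^2_{\alpha,0}\to L^2(\mu_\alpha)\) be the restriction \(f\mapsto f|_{(1/2,\infty)}\). By the identity \(\langle T_\alpha f,f\rangle=\int|f(t)|^2\dd\mu_\alpha(t)\) we have \(T_\alpha=J^*J\). Hence \(T_\alpha\) and \(JJ^*\) are unitarily equivalent off their kernels.

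For (1), \(\ker T_\alpha=\ker J=\{0\}\): a Dirichlet series in \(\A^2_{\alpha,0}\) vanishing on \((1/2,\infty)\) vanishes identically. The kernel of \(JJ^*\) is \(\ker J^*=(\operatorname{ran}J)^\perp\). After the substitution \(t=\tfrac12+x\) and the unitary map \(g\mapsto x^{(\alpha-1)/2}g\), the operator \(JJ^*\) is the integral operator on \(L^2(0,\infty)\) with kernel
\[
  x^{\beta}\bigl(\zeta(1+x+y)^\alpha-1\bigr)y^{\beta},\qquad \beta=\tfrac{\alpha-1}{2}.
\]
This kernel has exactly the form \(w(x)b(x+y)\overline{w(y)}\), with \(w(x)=x^\beta\) and \(b(s)=\zeta(1+s)^\alpha-1\). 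I therefore plan to deduce (2), (3) and (5) from the general spectral result for such weighted integral Hankel operators announced in the introduction. The step that must be checked is that any kernel of \(JJ^*\) does not affect the claims. It lies in the eigenspace at \(0\), and \(0\) is not an eigenvalue of \(T_\alpha\). If needed, I would show that \(\operatorname{ran}J\) is dense, or else argue only with \(T_\alpha\) restricted to \((0,\infty)\).

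Next I split \(b=b_0+b_1\) with \(b_0(s)=s^{-\alpha}\). The model operator with kernel \(x^\beta(x+y)^{-\alpha}y^\beta\) is homogeneous of degree \(-1\). The Mellin transform diagonalises it, with symbol
\[
  \varphi(\xi)=\int_0^\infty t^{\beta-1/2+i\xi}(1+t)^{-\alpha}\dd t
  =\frac{\Gamma(\alpha/2+i\xi)\Gamma(\alpha/2-i\xi)}{\Gamma(\alpha)} .
\]
This symbol is even, with maximum \(\varphi(0)=\Lambda_\alpha\) and \(\varphi(\xi)\to0\) as \(|\xi|\to\infty\). The model therefore has purely a.c.\ spectrum \([0,\Lambda_\alpha]\) of multiplicity two, exactly as for the Carleman operator. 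The remainder has two effects.

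\emph{At infinity.} For large \(s\) we have \(\zeta(1+s)^\alpha-1=O(2^{-s})\). The true kernel is thus negligible for large \(x,y\), while the model kernel is not. Cutting off to \((0,1)\), with a trace-class or smooth error, removes one of the two channels \(\xi\to\pm\infty\) of the Mellin variable. This is the mechanism that turns Carleman into Hilbert, and it gives multiplicity one.

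\emph{Near \(0\).} Near \(s=0\), \(b_1(s)=\alpha\gamma s^{1-\alpha}+O(s^{2-\alpha})\), which is one power less singular than \(b_0\). The corresponding operator is therefore "smooth" relative to the model in the Kato–Yafaev sense.

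Given these two facts, the general theorem (via Mourre theory with conjugate operator the generator of dilations) yields:
\begin{itemize}
\item the essential and a.c.\ spectra are \([0,\Lambda_\alpha]\);
\item \(\sigma_{\mathrm{sc}}=\varnothing\);
\item eigenvalues cannot lie in \((0,\Lambda_\alpha)\);
\item eigenvalues cannot accumulate except possibly at thresholds.
\end{itemize}
The thresholds \(0\) and \(\Lambda_\alpha=\varphi(0)\) need separate treatment. At \(\Lambda_\alpha\) the symbol is non-degenerate, \(\varphi''(0)<0\). Because the relative perturbation decays like an extra power of \(x\), i.e.\ exponentially in the logarithmic variable, a Birman–Schwinger count near a quadratic maximum gives finitely many eigenvalues above \(\Lambda_\alpha\) and excludes an eigenvalue at \(\Lambda_\alpha\). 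The same decay, combined with (1), handles \(0\).

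Simplicity of each eigenvalue above \(\Lambda_\alpha\) comes from multiplicity one of the channel. An eigenfunction solves a Wiener–Hopf type equation in the variable \(\log x\) and is determined by its leading asymptotics at \(x\to0\). That asymptotic space is one-dimensional, since there is a single decaying root of \(\varphi(\xi)=\lambda\) in the appropriate strip.

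For (4), define \(F(\alpha)=\sup\sigma(T_\alpha)-\Lambda_\alpha\ge0\). Existence of eigenvalues for \(\alpha\ge2\), and in particular some \(\alpha<2\), will follow from a test function \(x^{-1/2}\)-type trial, localized in the Mellin variable near \(\xi=0\) and concentrated at \(x\to0\). The Rayleigh quotient then equals \(\Lambda_\alpha+\) a first-order correction coming from \(b_1\), with sign governed by \(\alpha\gamma x^{1-\alpha}\) and the cutoff at infinity. For \(\alpha\) near \(1\) I would show the quadratic form is \(\le\Lambda_\alpha\|g\|^2\) by a Schur test with weight \(x^{-1/2}\). This amounts to the pointwise inequality \(x^{\beta}\bigl(\zeta(1+x+y)^\alpha-1\bigr)y^\beta\le\) an admissible majorant, so no eigenvalues appear.

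The main obstacle is \emph{uniqueness} of \(\alpha_*\), i.e.\ that the set of \(\alpha\) with an eigenvalue above \(\Lambda_\alpha\) is an interval \((\alpha_*,\infty)\). I would try to prove that the normalized operator \(\Lambda_\alpha^{-1}JJ^*\), conjugated to a fixed space, is monotone increasing in \(\alpha\) in form sense modulo the model. The idea is to compare \(\bigl(\zeta(1+s)^\alpha-1\bigr)/\Lambda_\alpha\) with its model \(s^{-\alpha}/\Lambda_\alpha\) via log-convexity of \(\alpha\mapsto\zeta(1+s)^\alpha s^{\alpha}\). I expect this monotonicity to be the hardest step. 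If it fails directly, I would fall back on analytic dependence on \(\alpha\) together with a threshold-resonance argument: an eigenvalue can only emerge from \(\Lambda_\alpha\) through a threshold resonance, and one shows that such a resonance happens at exactly one \(\alpha\) and that it is crossed transversally.
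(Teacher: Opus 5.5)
Your treatment of (1), (2) and the finiteness half of (3) is in line with the paper: the factorisation $T_\alpha=J^*J$, a Howland-type Mourre argument for the weighted Hankel kernel, and a Birman--Schwinger count at the quadratic maximum of the symbol. For multiplicity one you would use a trace-class comparison with the truncated model instead of Fedele--Pushnitski, which is a reasonable alternative.

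\textbf{The genuine gap is part (5).} You list ``eigenvalues cannot lie in $(0,\Lambda_\alpha)$'' among the outputs of the Mourre theorem. But your construction only produces $E_\Delta(T)\,i[\mathbf A,T]\,E_\Delta(T)\ge c\,E_\Delta(T)+K_\Delta$ with $K_\Delta$ compact, because the commutator of the perturbation is compact, not small. That yields only absence of singular continuous spectrum and finitely many eigenvalues of finite multiplicity in $\Delta$, which is exactly what Theorem~\ref{thm:weighted-Hankel-no-sc} states.
\begin{itemize}
\item Excluding eigenvalues would need a strict estimate ($K_\Delta=0$) plus the virial theorem, and you do not have one.
\item Kato--Yafaev smoothness and the limiting absorption principle are available only on intervals already known to be free of eigenvalues, so they cannot exclude them.
\item A Birman--Schwinger count as $\lambda\downarrow\Lambda_\alpha$ says nothing about an eigenvalue at $\Lambda_\alpha$ itself.
\end{itemize}
The real obstruction is that the Mellin transform of an $L^2$ eigenfunction can a priori be continued to the left of $\Re z=\frac12$. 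It may then acquire a pole at a non-real root of $\mathfrak m_\rho(z)=\lambda$, so that $f(x)\approx x^{x_*-1/2}\cos(\eta\log x+c)$ as $x\downarrow0$. Or it may continue indefinitely, so that $f$ vanishes to infinite order at $0$. The paper needs two new ideas here.
\begin{itemize}
\item For the infinite-order case, a quantitative Mellin recursion on the lines $\Re s=-n-\frac12$ shows that such vanishing forces $f\equiv0$.
\item For the oscillatory case, truncate the eigenfunction to a logarithmic window of length $L$ and apply a spectral projection and Gaussian smoothing; this gives at least $(1-2\delta)\eta_1L/\pi+O(1)$ sign changes. Strict total positivity of the kernel (Pinkus' oscillation theorem) together with Szeg\H{o}-type asymptotics for the truncated convolution model gives the upper bound $N_L(\lambda-\varepsilon)-1\le\xi_{\lambda-\varepsilon-\gamma}L/\pi+o(L)$. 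The strict Gamma-modulus inequality supplies $\eta_1>\xi_\lambda$, which makes the two bounds incompatible.
\end{itemize}
Nothing in your proposal plays either role.

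\textbf{The same missing ingredient undermines your simplicity argument.} Saying an eigenfunction ``is determined by its leading asymptotics'' presupposes that an eigenfunction whose coefficient at the first (real, simple) root vanishes must be zero. That is precisely the dichotomy above. The paper avoids it: it proves strict total positivity of $k_\alpha$ by Cauchy--Binet applied to $\phi_\alpha(t)=\sum_{n\ge2}d_\alpha(n)n^{-1}e^{-t\log n}$, then runs a Perron--Frobenius argument on the exterior powers $K_\alpha^{\wedge r}$.

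\textbf{For the uniqueness of $\alpha_*$ you have no working mechanism.} The map $\alpha\mapsto(s\zeta(1+s))^\alpha$ is log-linear, so its log-convexity carries no information. Moreover, $K_\alpha$ and $K_\beta$ carry different weights $x^{(\alpha-1)/2}$ and normalisations $\Lambda_\alpha$, so there is no form comparison on a fixed space. The resonance-crossing fallback just restates the problem: showing that a threshold resonance occurs for exactly one $\alpha$ is the whole difficulty. The paper's monotonicity, valid for $1<\alpha\le2$ and $\beta>\alpha$, uses three ingredients:
\begin{itemize}
\item the contraction $C_{\alpha,\beta}$ on $L^2(\dd x/x)$;
\item a beta-integral composition identity for the averages $\mathcal A_\alpha(q;t)$;
\item the inequality $R_{\alpha,\beta}\ge Q_\alpha$, which rests on concavity of $1/\zeta$ and on $-\zeta'/\zeta\le\zeta-1$ (a divisor-sum estimate).
\end{itemize}
Openness of $E$ is also needed so that $\alpha_*\notin E$.

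\textbf{Two smaller points.} First, the Schur test near $\alpha=1$ cannot be a pointwise majorisation by the model kernel, since $\zeta(1+t)^\alpha-1>t^{-\alpha}$ for small $t$ when $\alpha>1$. You must instead show that the integrated Schur function $\Lambda_\alpha+x^{\alpha/2}I_\alpha(x)$ stays at most $\Lambda_\alpha$. Second, in your trial-function argument at $\alpha=2$, the first-order correction is a competition between a positive contribution near $0$ and a negative one from the decay at infinity. Its sign has to be computed; the paper checks $\log2-1+J_0>0$.
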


We first pass from the Dirichlet-series operator to an weighted integral Hankel
operator. We factor \(T_\alpha=N_\alpha^*N_\alpha\). Hence the non-zero
spectral parts of \(T_\alpha\) and \(K_\alpha=N_\alpha N_\alpha^*\) are
unitarily equivalent, where
\[
  (K_\alpha F)(x)
  =
  \int_0^\infty
  (xy)^{(\alpha-1)/2}
  \bigl(\zeta(1+x+y)^\alpha-1\bigr)F(y)\dd y.
\]
Set \(\rho=(\alpha-1)/2\). As \(x+y\to0\), the kernel is asymptotic to
\[
  \frac{(xy)^\rho}{(x+y)^{1+2\rho}},
\]
which defines the homogeneous operator
\[
  (C_\rho F)(x)
  =
  \int_0^\infty
  \frac{(xy)^\rho}{(x+y)^{1+2\rho}}F(y)\dd y.
\]
The Mellin transform turns \(C_\rho\) into multiplication by
\[
  m_\rho(\xi)
  =
  \frac{|\Gamma(\alpha/2+i\xi)|^2}{\Gamma(\alpha)}.
\]
This function is even, strictly decreasing for \(\xi>0\), and satisfies
\(m_\rho(0)=\Lambda_\alpha\). Hence \(C_\rho\) has spectrum
\([0,\Lambda_\alpha]\).

To control the continuous spectrum,  we prove a general result for weighted Hankel operators with kernels $w(x)b(x+y)\overline{w(y)}$. Under assumptions on the behaviour of $w$ and $b$ at $0$ and $\infty$, together with two derivative bounds, the theorem determines the essential spectrum and excludes singular continuous spectrum. The proof uses a Mourre estimate and follows ideas of
Howland \cite{Howland1992}; see also
\cite{Mourre1981,Yafaev2014}. This general weighted Hankel result is useful beyond the particular zeta kernel considered here. The absolutely continuous spectrum is obtained by combining this analysis with the theorem of Fedele and Pushnitski \cite{FedelePushnitski2017}.

To study eigenvalues above \(\Lambda_\alpha\), we use the expansion of the
model multiplier at \(\xi=0\):
\[
  m_\rho(\xi)
  =
  \Lambda_\alpha-\nu_\alpha\xi^2+O(\xi^4),
  \qquad
  \nu_\alpha
  =
  \Lambda_\alpha\psi_1(\alpha/2)>0.
\]
From this expansion we obtain a resolvent estimate near \(\Lambda_\alpha\)
whose singular part has rank one. The difference between \(K_\alpha\) and
\(C_\rho\) is controlled by Hilbert--Schmidt estimates. These estimates show
that only finitely many eigenvalues can lie above \(\Lambda_\alpha\).

To prove that these eigenvalues are simple, we use strict total positivity.
The identity
\[
  \zeta(1+t)^\alpha-1
  =
  \sum_{n\geq2}\frac{d_\alpha(n)}{n}e^{-t\log n}
\]
and the Cauchy--Binet formula show that every finite minor of the kernel of
\(K_\alpha\) is positive; compare \cite{Pinkus1996,Schoenberg1950}.
Applying this fact to the exterior powers of \(K_\alpha\) shows that every
eigenvalue above the essential spectrum is simple.

The critical parameter \(\alpha_*\) follows from three estimates. A Schur
estimate gives \(\|K_\alpha\|=\Lambda_\alpha\) when \(\alpha>1\) is close
enough to \(1\). A test function gives \(\|K_2\|>\Lambda_2\). An identity
involving the beta function then shows that if
\(\|K_\alpha\|>\Lambda_\alpha\) for one \(1<\alpha\leq2\), the same
inequality holds for every larger parameter. Continuity in \(\alpha\)
therefore gives a unique \(\alpha_*\in(1,2)\).

The exclusion of non-zero eigenvalues from the essential spectrum is more delicate.
Assuming \(K_\alpha f=\lambda f\) with \(0<\lambda\leq\Lambda_\alpha\), we use the Mellin
transform and continue it through successive vertical strips. This leads to two alternatives.
In the first,
\[
  x^{-N}f\in L^2(\mathbb R_+)
  \qquad\text{for every integer }N\geq0.
\]
A local expansion of the kernel and the Mellin identities obtained in Section~5 then
imply that \(f\) vanishes identically. In the second, the continued Mellin transform has
a non-removable pole in the half-plane \(\Re z<1/2\). The asymptotic formula obtained
from this pole gives a lower bound for the number of sign changes after truncation.
Strict total positivity gives an upper bound for the same number in terms of the
eigenvalues of the truncated operator. The eigenvalue-counting asymptotic for the
homogeneous model then gives the contradiction. Hence no non-zero eigenvalue can occur
in the essential spectrum.

Positive Helson matrices were studied by Perfekt and Pushnitski
\cite{PerfektPushnitski2018Moments}. They obtained moment representations
for such matrices and related their boundedness to Carleson measures; they
also studied non-negativity and finite rank.

Section~6 considers the corresponding weighted forms on the coefficient
space associated with \(\A_\alpha^2\). For a sequence
\(\varrho=(\varrho(n))_{n\geq1}\), the form is given by
\[
  \Hankel_{\varrho,d_\alpha}(a,b)
  =
  \sum_{m,n\geq1}
  a_mb_n\frac{\varrho(mn)}{d_\alpha(mn)}.
\]
The choice
\[
  \varrho_0(n)
  =
  \frac{d_\alpha(n)}{\sqrt n\,(\log n)^\alpha},
  \qquad n\geq2,
\]
gives, on the indices \(m,n\geq2\),
\[
  \Hankel_{\varrho_0,d_\alpha}(a,b)
  =
  \sum_{m,n\geq2}
  \frac{a_mb_n}{\sqrt{mn}\,(\log(mn))^\alpha}.
\]
Thus, apart from the factor \(\Gamma(\alpha)\), this is the form
corresponding to the matrix of \(T_\alpha\). We prove that it is bounded and
non-compact, and we also study more general weighted Helson forms by
comparing their defining sequences with \(\varrho_0\). Finally, for forms
induced by finite positive measures on \((1/2,\infty)\), we characterize
boundedness in terms of the measure, its moment sequence, and the associated
integral operator.

The paper is organized as follows. Section~2 introduces the divisor weights,
the Bergman spaces of Dirichlet series, and weighted Helson forms. Section~3
passes from \(T_\alpha\) to the integral Hankel model and determines the
essential, absolutely continuous, and singular continuous spectra. Section~4
studies eigenvalues above the essential spectrum, proving their finiteness
and simplicity and establishing the existence and uniqueness of the critical
parameter \(\alpha_*\). Section~5 proves the absence of embedded
eigenvalues. Section~6 returns to weighted multiplicative Hankel forms,
describes the critical coefficient scale, and proves the measure
characterization described above.

\section{Preliminaries}

Throughout the paper, \(\mathbb R_+=(0,\infty)\) and
\(\N_{\geq2}=\{2,3,\ldots\}\).

\subsection{Divisor weights and Bergman spaces of Dirichlet series}

For \(\alpha>0\), define \(d_\alpha(n)\) by
\[
  \zeta(s)^\alpha
  =
  \sum_{n=1}^{\infty}d_\alpha(n)n^{-s},
  \qquad \operatorname{Re}s>1.
\]
For every prime \(p\) and \(k\geq0\),
\[
  d_\alpha(p^k)
  =
  \frac{\Gamma(k+\alpha)}{\Gamma(\alpha)\Gamma(k+1)}.
\]
In particular, \(d_1(n)=1\) and \(d_2(n)=d(n)\), the ordinary divisor
function.

For \(\alpha>1\), the Hilbertian Bergman space of Dirichlet series
\(\A_\alpha^2\) may be described as the completion of Dirichlet polynomials
for the norm
\[
  \left\|\sum_{n\geq1}a_nn^{-s}\right\|_{\A_\alpha^2}^2
  =
  \sum_{n\geq1}\frac{|a_n|^2}{d_\alpha(n)}.
\]
This is the Hilbert-space realization obtained from the Bergman--Bohr
construction; see \cite{BayartEtAl2019,BommierHato2020}. It follows from the
definition of \(d_\alpha\) and the Cauchy--Schwarz inequality that point
evaluation is bounded in the half-plane \(\operatorname{Re}s>1/2\). More
precisely,
\[
  |f(s)|^2
  \leq
  \|f\|_{\A_\alpha^2}^2
  \sum_{n\geq1}d_\alpha(n)n^{-2\operatorname{Re}s}
  =
  \|f\|_{\A_\alpha^2}^2
  \zeta(2\operatorname{Re}s)^\alpha.
\]
Consequently the reproducing kernel is
\[
  K_w(s)=\zeta(s+\overline w)^\alpha.
\]
We write
\[
  \A_{\alpha,0}^2
  =
  \{f\in\A_\alpha^2:f(+\infty)=0\}
\]
for the closed subspace of Dirichlet series with zero constant coefficient.
Its reproducing kernel is
\[
  K_w^0(s)=\zeta(s+\overline w)^\alpha-1,
\]
and
\[
  e_n(s)=\sqrt{d_\alpha(n)}\,n^{-s},
  \qquad n\geq2,
\]
is an orthonormal basis of \(\A_{\alpha,0}^2\).

\subsection{Weighted Helson forms}

Define
\[
  \ell^2_{d_\alpha}(\N)
  =
  \left\{
    a=(a_n)_{n\geq1}:
    \|a\|_{\ell^2_{d_\alpha}}^2
    :=
    \sum_{n\geq1}\frac{|a_n|^2}{d_\alpha(n)}<\infty
  \right\}.
\]
The coefficient map identifies \(\A_\alpha^2\) unitarily with
\(\ell^2_{d_\alpha}(\N)\).

Let \(\varrho=(\varrho(n))_{n\geq1}\) be a complex sequence. The associated
weighted multiplicative Hankel form is initially defined on finitely
supported sequences by
\[
  \Hankel_{\varrho,d_\alpha}(a,b)
  =
  \sum_{m,n\geq1}
  a_mb_n\frac{\varrho(mn)}{d_\alpha(mn)}.
\]
We call the form bounded if there is a constant \(C\) such that
\[
  |\Hankel_{\varrho,d_\alpha}(a,b)|
  \leq
  C\|a\|_{\ell^2_{d_\alpha}}
  \|b\|_{\ell^2_{d_\alpha}}
\]
for all finitely supported \(a\) and \(b\). The unitary operator
\[
  \mathcal U_\alpha:
  \ell^2_{d_\alpha}(\N)\longrightarrow\ell^2(\N),
  \qquad
  (\mathcal U_\alpha a)_n
  =
  \frac{a_n}{\sqrt{d_\alpha(n)}},
\]
represents this form by the weighted Helson matrix
\[
  M_{d_\alpha}(\varrho)
  =
  \left(
    \frac{\sqrt{d_\alpha(m)d_\alpha(n)}}{d_\alpha(mn)}
    \varrho(mn)
  \right)_{m,n\geq1}
\]
on \(\ell^2(\N)\). Accordingly, boundedness and compactness of the form
mean boundedness and compactness of this operator.

Under the coefficient identification with \(\A_\alpha^2\), the same object
is a multiplicative Hankel form on Dirichlet series. If
\[
  \varphi(s)=\sum_{n\geq1}\varrho(n)n^{-s},
  \qquad
  f(s)=\sum_{m\geq1}a_mm^{-s},
  \qquad
  g(s)=\sum_{n\geq1}b_nn^{-s},
\]
then, initially for Dirichlet polynomials,
\[
  \Hankel_\varphi(f,g)
  =
  \sum_{m,n\geq1}
  a_mb_n\frac{\varrho(mn)}{d_\alpha(mn)}.
\]
Thus the weighted Helson matrix and the multiplicative Hankel form are two
realizations of the same object.

\subsection{Spectral notation}

For a bounded self-adjoint operator \(S\), we write
\[
  \sigma(S),\quad
  \sigma_{\mathrm{ess}}(S),\quad
  \sigma_{\mathrm{ac}}(S),\quad
  \sigma_{\mathrm{sc}}(S),\quad
  \sigma_{\mathrm p}(S)
\]
for its spectrum, essential spectrum, absolutely continuous spectrum,
singular continuous spectrum, and point spectrum, respectively. If
\(I\subset\mathbb R\) is an interval, then \(N(I;S)\) denotes the number of
eigenvalues of \(S\) in \(I\), counted with multiplicities.

We use \(U\lesssim V\) when \(U\leq CV\) for a constant \(C>0\) independent
of the variables under consideration. The notation \(U\gtrsim V\) has the
reverse meaning, and \(U\asymp V\) means that both \(U\lesssim V\) and
\(U\gtrsim V\) hold. Unless stated otherwise, the constants may depend on
parameters that are fixed in the relevant argument.

\section{Spectral properties of \texorpdfstring{\(T_\alpha\)}{T-alpha}}

We begin by passing from \(T_\alpha\) to an weighted integral Hankel operator.

\subsection{Boundedness, non-negativity, and the kernel}

Let \(\mathcal P_0\) denote the Dirichlet polynomials with zero constant
coefficient.  We shall use the standard unitary equivalence of \(N^*N\)
and \(NN^*\) away from their kernels; see, for example,
\cite[p.~9]{FedelePushnitski2017}.

\begin{theorem}\label{thm:Ta-unitary-Ka}
Let \(\alpha>1\).  Then \(T_\alpha\geq0\) and
\(\ker T_\alpha=\{0\}\).  Moreover, \(T_\alpha\) is unitarily
equivalent to
\[ K_\alpha\big|_{(\ker K_\alpha)^\perp}, \]
where \(K_\alpha\) is the bounded non-negative self-adjoint operator on
\(L^2(\mathbb R_+)\) given by
\[ (K_\alpha F)(u) = \int_0^\infty \bigl(\zeta(1+u+v)^\alpha-1\bigr) (uv)^{(\alpha-1)/2}F(v)\dd v. \]
\end{theorem}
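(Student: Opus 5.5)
We factor \(T_\alpha = N_\alpha^*N_\alpha\) through an explicit map into \(L^2(\mathbb R_+)\). For a Dirichlet polynomial \(f=\sum_{n\ge2}a_nn^{-s}\in\mathcal P_0\), set
\[
  (N_\alpha f)(u)=u^{(\alpha-1)/2}f\Bigl(\tfrac12+u\Bigr),\qquad u>0 .
\]
By the Carleson embedding recalled in the introduction, which rests on Olsen's local embedding theorem together with the point-evaluation bound \(|f(s)|^2\le\|f\|^2\zeta(2\Re s)^\alpha\) for large \(\Re s\), we have
\[
  \|N_\alpha f\|_{L^2}^2=\int_{1/2}^\infty|f(t)|^2\bigl(t-\tfrac12\bigr)^{\alpha-1}\dd t\lesssim\|f\|_{\A_\alpha^2}^2 .
\]
Hence \(N_\alpha\) extends to a bounded operator \(\A_{\alpha,0}^2\to L^2(\mathbb R_+)\).

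\textbf{Step 1: the adjoint.} Point evaluation at \(\tfrac12+v\) is represented by \(K^0_{1/2+v}\), so for \(F\in L^2\) with compact support in \((0,\infty)\) we get
\[
  N_\alpha^*F=\int_0^\infty F(v)\,v^{(\alpha-1)/2}K^0_{1/2+v}\,\dd v .
\]
A density argument then extends this to all of \(L^2\). Inserting this formula, and using that \(K^0\) has real coefficients, gives
\[
  N_\alpha^*N_\alpha f(s)=\int_{1/2}^\infty f(t)\bigl(\zeta(s+t)^\alpha-1\bigr)\bigl(t-\tfrac12\bigr)^{\alpha-1}\dd t=T_\alpha f(s).
\]
So \(T_\alpha=N_\alpha^*N_\alpha\ge0\) on \(\mathcal P_0\), and therefore on the whole space. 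In the same way,
\[
  N_\alpha N_\alpha^*F(u)=u^{(\alpha-1)/2}\int_0^\infty F(v)v^{(\alpha-1)/2}\bigl(\zeta(1+u+v)^\alpha-1\bigr)\dd v=K_\alpha F(u),
\]
since \(K^0_{1/2+v}(\tfrac12+u)=\zeta(1+u+v)^\alpha-1\). It follows that \(K_\alpha\) is bounded, self-adjoint and non-negative. To justify exchanging the integrals, I first work with compactly supported \(F\), where \(\zeta(1+u+v)^\alpha-1\) is bounded on the support in \(v\) and Fubini applies, and then pass to all of \(L^2\) using the boundedness of \(N_\alpha N_\alpha^*\).

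\textbf{Step 2: trivial kernel.} Since \(\ker T_\alpha=\ker N_\alpha\), it suffices to show that \(N_\alpha f=0\) forces \(f=0\). If \(N_\alpha f=0\), then \(f(\tfrac12+u)=0\) for almost every \(u>0\). Every element of \(\A^2_{\alpha,0}\) is given by an absolutely convergent Dirichlet series on \(\Re s>1/2\), by the Cauchy–Schwarz bound above, and so is holomorphic there. Its restriction to the real ray is real-analytic and vanishes a.e., hence vanishes identically. Therefore \(f\equiv0\) on \(\Re s>1/2\), and uniqueness of Dirichlet coefficients for absolutely convergent series gives \(f=0\).

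\textbf{Step 3: unitary equivalence.} Take the polar decomposition \(N_\alpha=V|N_\alpha|\), where \(V\) is a partial isometry from \((\ker N_\alpha)^\perp=\A^2_{\alpha,0}\) onto \(\overline{\operatorname{ran}N_\alpha}=(\ker N_\alpha^*)^\perp=(\ker K_\alpha)^\perp\). Then \(VT_\alpha V^*=N_\alpha N_\alpha^*=K_\alpha\) on \((\ker K_\alpha)^\perp\). This is exactly the standard fact cited from \cite{FedelePushnitski2017}, and since \(\ker N_\alpha=\{0\}\), the map \(V\) is unitary from the whole space onto \((\ker K_\alpha)^\perp\).

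\textbf{Main obstacle.} I expect the only delicate point to be the rigorous justification of the adjoint formula and the kernel identity in Step 1. The function \(\zeta(1+u+v)^\alpha\) blows up like \((u+v)^{-\alpha}\) near the origin, so the interchange of sums and integrals cannot be done naively. I would handle this by restricting to dense classes, namely \(\mathcal P_0\) and compactly supported \(F\), and then extending by the boundedness already obtained from the Carleson estimate.
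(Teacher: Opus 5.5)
Your proposal is correct and follows the paper's proof essentially step for step. You use the same map $N_\alpha f(u)=u^{(\alpha-1)/2}f(\tfrac12+u)$, the factorizations $T_\alpha=N_\alpha^*N_\alpha$ and $N_\alpha N_\alpha^*=K_\alpha$, the identity theorem to get $\ker T_\alpha=\ker N_\alpha=\{0\}$, and the unitary equivalence of $N^*N$ and $NN^*$ off their kernels. The differences are cosmetic: you compute $N_\alpha^*$ through the reproducing kernels $K^0_{1/2+v}$ and take boundedness of $N_\alpha$ directly from the Carleson embedding, whereas the paper computes with Dirichlet coefficients (recovering the matrix entries $\Gamma(\alpha)/(\sqrt{mn}\,(\log mn)^\alpha)$) and takes boundedness of $T_\alpha$ from the introduction; you also spell out the polar decomposition behind the standard fact the paper cites.
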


\begin{proof}
Define
\[ N_\alpha:\mathcal P_0\subset\A^2_{\alpha,0} \longrightarrow L^2(\mathbb R_+) \]
by
\[ (N_\alpha f)(u) = u^{(\alpha-1)/2} f\left(\frac12+u\right), \qquad u>0. \]
We first prove on \(\mathcal P_0\) that
\[ T_\alpha=N_\alpha^*N_\alpha. \]
Let
\[ f(s)=\sum_{m\geq2}a_mm^{-s}, \qquad g(s)=\sum_{n\geq2}b_nn^{-s}. \]
Then
\[
\begin{split}
  \langle N_\alpha f,N_\alpha g\rangle_{L^2(\mathbb R_+)}
  &=
  \int_0^\infty
  f\left(\frac12+u\right)
  \overline{g\left(\frac12+u\right)}
  u^{\alpha-1}\dd u\\
  &=
  \sum_{m,n\geq2}
  \frac{a_m\overline{b_n}}{\sqrt{mn}}
  \int_0^\infty
  (mn)^{-u}u^{\alpha-1}\dd u\\
  &=
  \sum_{m,n\geq2}
  \frac{\Gamma(\alpha)a_m\overline{b_n}}
       {\sqrt{mn}\,(\log(mn))^\alpha}\\
  &=
  \langle T_\alpha f,g\rangle_{\A^2_{\alpha,0}}.
\end{split}
\]
Since \(T_\alpha\) is bounded on \(\A^2_{\alpha,0}\),
\[ \|N_\alpha f\|_{L^2(\mathbb R_+)}^2 = \langle T_\alpha f,f\rangle_{\A^2_{\alpha,0}} \leq \|T_\alpha\|\,\|f\|_{\A^2_{\alpha,0}}^2. \]
Thus \(N_\alpha\) extends uniquely to a bounded operator
\[ N_\alpha:\A^2_{\alpha,0}\longrightarrow L^2(\mathbb R_+), \]
and
\[ T_\alpha=N_\alpha^*N_\alpha \]
on \(\A^2_{\alpha,0}\).  In particular, \(T_\alpha\) is self-adjoint
and non-negative.

We next compute \(N_\alpha N_\alpha^*\).  If
\(F\in C_c^\infty(\mathbb R_+)\), then
\[ (N_\alpha^*F)(s) = \sum_{n\geq2} d_\alpha(n) \left( \int_0^\infty n^{-1/2-v}v^{(\alpha-1)/2}F(v)\dd v \right)n^{-s}. \]
Therefore
\[
\begin{split}
  (N_\alpha N_\alpha^*F)(u)
  &=
  u^{(\alpha-1)/2}
  \int_0^\infty
  \left(
    \sum_{n\geq2}d_\alpha(n)n^{-1-u-v}
  \right)
  v^{(\alpha-1)/2}F(v)\dd v\\
  &=
  u^{(\alpha-1)/2}
  \int_0^\infty
  \bigl(\zeta(1+u+v)^\alpha-1\bigr)
  v^{(\alpha-1)/2}F(v)\dd v.
\end{split}
\]
Hence
\[ N_\alpha N_\alpha^*=K_\alpha. \]
It follows that \(K_\alpha\) is bounded, self-adjoint, and
non-negative.

Since \(T_\alpha=N_\alpha^*N_\alpha\),
\[ \ker T_\alpha=\ker N_\alpha. \]
If \(f\in\ker N_\alpha\), then
\[ f\left(\frac12+u\right)=0 \]
for almost every \(u>0\).  Every function in
\(\A^2_{\alpha,0}\) is analytic in the half-plane
\(\Re s>1/2\), and the identity theorem therefore gives \(f\equiv0\).
Thus
\[ \ker T_\alpha=\{0\}. \]

For every bounded operator \(N\) between Hilbert spaces, the
restrictions of \(N^*N\) and \(NN^*\) to the orthogonal complements of
their kernels are unitarily equivalent \cite[p.~9]{FedelePushnitski2017}.  Applying this fact to
\(N_\alpha\) proves
\[ T_\alpha \simeq K_\alpha\big|_{(\ker K_\alpha)^\perp}. \]
\end{proof}

\subsection{Essential, absolutely continuous, and singular continuous spectra}

The argument below is a weighted version of the two-channel
construction introduced by Howland in
\cite{Howland1992}.  In particular, the logarithmic dilation generator,
the twisting of the two endpoint models, and the transfer of the strict
commutator estimate through a compact perturbation follow the method of
that paper.

\begin{theorem}[Mourre]\label{thm:Mourre}
Let \(H\) and \(A\) be self-adjoint operators on a Hilbert space, with
\(H\) bounded.  Assume that the quadratic-form commutators
\([A,H]\) and \([A,[A,H]]\), initially defined on
\(\mathcal D(A)\), extend to bounded operators.  Let
\(\Delta\subset\mathbb R\) be an open interval and let
\(E_\Delta(H)\) be the corresponding spectral projection.  Suppose
that there are \(\delta>0\) and a compact self-adjoint operator \(K\)
such that
\begin{equation}\label{eq:Mourre-estimate} E_\Delta(H)i[A,H]E_\Delta(H) \geq \delta E_\Delta(H)+K.\end{equation}
Then \(H\) has no singular continuous spectrum in \(\Delta\). Moreover, $H$ has at most finitely many eigenvalues in \(\Delta\), and every such eigenvalue has finite multiplicity.
\end{theorem}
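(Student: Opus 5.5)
The plan is the standard Mourre argument: the virial theorem, then localisation of \eqref{eq:Mourre-estimate} to remove \(K\), then a limiting absorption principle obtained by Mourre's differential inequality.

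\emph{Virial theorem.} Let \(H\psi=E\psi\). I would show that \(\langle\psi,i[A,H]\psi\rangle=0\). Formally this is \(\langle\psi,(AH-HA)\psi\rangle=E\langle\psi,A\psi\rangle-E\langle\psi,A\psi\rangle\), but \(\psi\) need not lie in \(\mathcal D(A)\). So I regularise with the bounded operators \(A_\varepsilon=A(1+i\varepsilon A)^{-1}\). The identity \(\langle\psi,[A_\varepsilon,H]\psi\rangle=0\) is exact. Writing \([A_\varepsilon,H]=(1+i\varepsilon A)^{-1}[A,H](1+i\varepsilon A)^{-1}\) and using that \([A,H]\) is bounded, I let \(\varepsilon\to0\) by strong convergence \((1+i\varepsilon A)^{-1}\to I\).

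\emph{Finiteness of eigenvalues and multiplicities.} Suppose the eigenvalues in \(\Delta\), counted with multiplicity, were infinite in number. Then there would be an orthonormal sequence \(\psi_n\) of eigenvectors with eigenvalues in \(\Delta\), so \(E_\Delta(H)\psi_n=\psi_n\). Inserting \(\psi_n\) into \eqref{eq:Mourre-estimate} and using the virial theorem gives \(0\geq\delta+\langle K\psi_n,\psi_n\rangle\). Since \(\psi_n\rightharpoonup0\) and \(K\) is compact, \(\langle K\psi_n,\psi_n\rangle\to0\), which is a contradiction. Hence \(\sigma_{\mathrm p}(H)\cap\Delta\) is finite and every eigenvalue there has finite multiplicity.

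\emph{Strict estimate away from eigenvalues.} Fix \(E\in\Delta\setminus\sigma_{\mathrm p}(H)\) and let \(\Delta'\ni E\) be a small open interval with \(\overline{\Delta'}\subset\Delta\). Multiplying \eqref{eq:Mourre-estimate} on both sides by \(E_{\Delta'}(H)\) gives
\[
  E_{\Delta'}(H)\,i[A,H]\,E_{\Delta'}(H)\geq\delta E_{\Delta'}(H)+E_{\Delta'}(H)KE_{\Delta'}(H).
\]
As \(\Delta'\) shrinks to \(\{E\}\), \(E_{\Delta'}(H)\) converges strongly to the projection onto \(\ker(H-E)=\{0\}\). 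Since \(K\) is compact, \(\|E_{\Delta'}(H)KE_{\Delta'}(H)\|\to0\). Hence for \(\Delta'\) small enough I obtain a strict estimate with constant \(\delta/2\) and no compact term. The finite eigenvalue set is closed, so \(\Delta\setminus\sigma_{\mathrm p}(H)\) is covered by such intervals.

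\emph{Limiting absorption principle.} On each such \(\Delta'\) I aim to prove
\[
  \sup\bigl\{\|\langle A\rangle^{-1}(H-z)^{-1}\langle A\rangle^{-1}\| : \Re z\in I,\ 0<\Im z\leq1\bigr\}<\infty
\]
for every compact \(I\subset\Delta'\). I would follow Mourre's method. Set \(M=E_{\Delta'}(H)\,i[A,H]\,E_{\Delta'}(H)\) and \(G_\varepsilon(z)=(H-i\varepsilon M-z)^{-1}\). The positivity of \(M\) on the range of \(E_{\Delta'}(H)\) makes \(G_\varepsilon(z)\) exist with good bounds. With \(F_\varepsilon=\langle A\rangle^{-1}G_\varepsilon\langle A\rangle^{-1}\), I would derive a differential inequality of the form
\[
  \bigl\|\tfrac{\dd}{\dd\varepsilon}F_\varepsilon\bigr\|\leq C\bigl(1+\varepsilon^{-1/2}\|F_\varepsilon\|^{1/2}+\|F_\varepsilon\|\bigr).
\]
Integrating it from \(\varepsilon=1\) down to \(\varepsilon=0\) gives a uniform bound. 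Boundedness of \([A,[A,H]]\) is what controls the terms in the derivative. I expect this step to be the main obstacle: one needs careful domain bookkeeping, again via the \(A_\varepsilon\) regularisation, and the decomposition into \(E_{\Delta'}(H)\) and \(1-E_{\Delta'}(H)\) parts.

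\emph{Conclusion.} The uniform resolvent bound implies that for \(\varphi\) in the dense set \(\operatorname{Ran}\langle A\rangle^{-1}\), the spectral measure of \(\varphi\) restricted to \(I\) has a bounded density, namely \(\pi^{-1}\lim_{\eta\downarrow0}\Im\langle(H-\lambda-i\eta)^{-1}\varphi,\varphi\rangle\). Hence \(E_I(H)\) maps into the absolutely continuous subspace. Covering \(\Delta\) minus the finite eigenvalue set by such intervals \(I\) gives \(\sigma_{\mathrm{sc}}(H)\cap\Delta=\varnothing\).
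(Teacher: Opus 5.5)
The paper gives no proof of this statement; it quotes Mourre's theorem. So the question is whether your reconstruction holds up on its own. Your virial theorem is correct, including the formula \((1+i\varepsilon A)^{-1}[A,H](1+i\varepsilon A)^{-1}\) for \([A_\varepsilon,H]\). The finiteness argument and the removal of \(K\) near points of \(\Delta\setminus\sigma_{\mathrm p}(H)\) are also correct.

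\textbf{The gap.} It lies in the limiting absorption step, at the choice \(M=E_{\Delta'}(H)\,i[A,H]\,E_{\Delta'}(H)\). The gain in Mourre's differential inequality comes from the identity
\[
  G_\varepsilon[A,H]G_\varepsilon
  =G_\varepsilon A-AG_\varepsilon
   +i\varepsilon\,G_\varepsilon[A,M]G_\varepsilon,
  \qquad
  G_\varepsilon^{-1}=H-i\varepsilon M-z.
\]
So \(\frac{\dd}{\dd\varepsilon}F_\varepsilon\) contains \(\varepsilon\langle A\rangle^{-1}G_\varepsilon[A,M]G_\varepsilon\langle A\rangle^{-1}\). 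This term is bounded by \(C(\varepsilon+\|F_\varepsilon\|)\) only if \([A,M]\) is bounded.

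With a sharp spectral projection, \([A,M]\) contains \([A,E_{\Delta'}(H)]\), which is in general unbounded. This is not a matter of domain bookkeeping. Take \(H\) to be multiplication by \(\tanh x\) and \(A=-i\,\dd/\dd x\) on \(L^2(\mathbb R)\). Then \([A,H]\) and \([A,[A,H]]\) are bounded multiplication operators, and a strict Mourre estimate holds whenever \(\overline{\Delta'}\subset(-1,1)\). But \(E_{\Delta'}(H)\) is multiplication by the indicator of an interval \((a,b)\), and \([A,E_{\Delta'}(H)]\) is the form \((u,v)\mapsto i\bigl(\overline{u(b)}v(b)-\overline{u(a)}v(a)\bigr)\). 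That form is not bounded on \(L^2\). The same thing happens for the model \(C_\rho\) of this paper in the Mellin picture. Hence the hypothesis on \([A,[A,H]]\) does not control the derivative, and your inequality cannot be derived from this \(M\).

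\textbf{The repair.} The standard fix is smooth localisation.
\begin{enumerate}
\item Take a real \(f\in C_c^\infty(\Delta')\) with \(f=1\) on a neighbourhood of \(I\), and set \(M=f(H)\,i[A,H]\,f(H)\). Since \(f(H)=f(H)E_{\Delta'}(H)\), your strict estimate gives \(M\geq\frac\delta2f(H)^2\geq0\).
\item Boundedness of the form \([A,H]\) implies that \(H\) maps \(\mathcal D(A)\) into itself. Duhamel's formula then gives \(\|[A,e^{itH}]\|\leq|t|\,\|[A,H]\|\).
\item Write \(f(H)\) as a Fourier integral of \(e^{itH}\) against \(\hat f\), where \(\int|t|\,|\hat f(t)|\dd t<\infty\). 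This shows \([A,f(H)]\) is bounded. Together with the hypothesis on \([A,[A,H]]\), \([A,M]\) is bounded.
\end{enumerate}

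What you lose is \((1-E)G_\varepsilon=(H-z)^{-1}(1-E)\), which is why the sharp projection looked convenient. Use instead
\[
  \bigl(1-f(H)^2\bigr)G_\varepsilon
  =\bigl(1-f(H)^2\bigr)(H-z)^{-1}\bigl(1+i\varepsilon MG_\varepsilon\bigr).
\]
Here \(\bigl(1-f(H)^2\bigr)(H-z)^{-1}\) is uniformly bounded for \(\Re z\in I\). Combine this with \(\|MG_\varepsilon\psi\|\lesssim\|f(H)G_\varepsilon\psi\|\) and \(\frac{\delta\varepsilon}{2}\|f(H)G_\varepsilon\psi\|^2\leq|\langle\psi,G_\varepsilon\psi\rangle|\). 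These give \(\|G_\varepsilon\|\lesssim\varepsilon^{-1}\) and \(\|G_\varepsilon\langle A\rangle^{-1}\|\lesssim1+\varepsilon^{-1/2}\|F_\varepsilon\|^{1/2}\), and the same bounds for \(G_\varepsilon^*\). After that, your differential inequality, its integration down to \(\varepsilon=0\), and your conclusion about absolutely continuous spectral measures go through as you describe.
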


We now verify the hypotheses of this theorem for weighted Hankel operators whose normalized kernel and weight have limits at both endpoints.

Let \(\rho>-1/2\), let \(b:(0,\infty)\to\mathbb R\) and \(w:(0,\infty)\to\mathbb C\), and consider
\[ (Hf)(x)=\int_0^\infty w(x)b(x+y)\overline{w(y)}f(y)\dd y \]
on \(L^2(\mathbb R_+)\).  Define
\[ a(t)=t^{1+2\rho}b(t), \qquad \omega(t)=t^{-\rho}w(t), \]
and write \(D^j=(t\partial_t)^j\), \(j=0,1,2\), and \(\langle s\rangle=1+|s|\).

The next theorem expresses the essential and singular continuous spectra in terms of the limits at zero and infinity.

\begin{theorem}\label{thm:weighted-Hankel-no-sc}
Assume that \(a,\omega\in C^2(0,\infty)\), and that there are
\(a_0,a_\infty\in\mathbb R\) and
\(\omega_0,\omega_\infty\in\mathbb C\) such that
\[ a(t)\to a_0, \qquad \omega(t)\to\omega_0, \qquad t\to0, \]
and
\[ a(t)\to a_\infty, \qquad \omega(t)\to\omega_\infty, \qquad t\to\infty. \]
Assume, in addition, that, for \(f=a,\omega\),
\[ |(D^2f)(t)|\leq C\langle\log t\rangle^{-2}, \qquad t>0. \]
Set
\[ q_0=a_0|\omega_0|^2, \qquad q_\infty=a_\infty|\omega_\infty|^2, \qquad \pi_\rho= \frac{\Gamma(\rho+1/2)^2}{\Gamma(1+2\rho)}. \]
Then \(H\) extends to a bounded self-adjoint operator and
\[ \sigma_{\mathrm{ess}}(H) = q_0[0,\pi_\rho]\cup q_\infty[0,\pi_\rho]. \]
Here \(q[0,\pi_\rho]=[q\pi_\rho,0]\) if \(q<0\).  Moreover,
\[ \sigma_{\mathrm{sc}}(H)=\varnothing. \]

More precisely, define
\[
  q(0,\pi_\rho)
  =
  \begin{cases}
    (0,q\pi_\rho),&q>0,\\
    (q\pi_\rho,0),&q<0,\\
    \varnothing,&q=0.
  \end{cases}
\]
If \(\Delta\) is a non-empty bounded open interval such that
\[ \overline\Delta \subset \bigl(q_0(0,\pi_\rho)\cup q_\infty(0,\pi_\rho)\bigr) \setminus\{q_0\pi_\rho,q_\infty\pi_\rho\}, \]
then \(H\) has no singular continuous spectrum in \(\Delta\), and
there are only finitely many eigenvalues in \(\Delta\), each of finite
multiplicity.
\end{theorem}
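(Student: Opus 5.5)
Following Howland \cite{Howland1992}, I pass to logarithmic variables and use the Mellin transform. Let \(V:L^2(\mathbb R_+)\to L^2(\mathbb R)\) be \((Vf)(s)=e^{s/2}f(e^s)\). The model operator
\[
  (C_\rho f)(x)=\int_0^\infty \frac{(xy)^\rho}{(x+y)^{1+2\rho}}f(y)\dd y
\]
is homogeneous of degree \(-1\). Hence \(VC_\rho V^{-1}\) is convolution by
\[
  k(s)=\frac{e^{(\rho+1/2)s}}{(1+e^s)^{1+2\rho}}=\bigl(2\cosh(s/2)\bigr)^{-1-2\rho}.
\]
Its Fourier transform is the multiplier
\[
  m(\xi)=\frac{|\Gamma(\rho+1/2+i\xi)|^2}{\Gamma(1+2\rho)},
\]
which is even, analytic, decreasing in \(|\xi|\), and satisfies \(m(0)=\pi_\rho\). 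Thus \(\sigma(C_\rho)=[0,\pi_\rho]\), purely absolutely continuous.

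Write \(H\) via the kernel
\[
  \omega(x)a(x+y)\overline{\omega(y)}\,\frac{(xy)^\rho}{(x+y)^{1+2\rho}}.
\]
Choose a smooth partition \(\chi_0+\chi_\infty=1\) with \(\chi_0=1\) near \(0\) and \(\chi_\infty=1\) near \(\infty\). Split
\[
  H=q_0\chi_0C_\rho\chi_0+q_\infty\chi_\infty C_\rho\chi_\infty+R .
\]
First I show that \(R\) is compact. The cross terms \(\chi_0C_\rho\chi_\infty\) are compact because, in logarithmic variables, the kernel \(k(s-t)\) decays exponentially while the supports separate. The remaining terms carry the factors \(a(x+y)-a_0\) and \(\omega-\omega_0\), which tend to zero at the relevant endpoint. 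On the diagonal region \(x\asymp y\) this yields compactness of \(k(s-t)\,\varepsilon(s)\), where \(\varepsilon(s)\to0\) as \(s\to\pm\infty\); it is a norm limit of Hilbert--Schmidt operators. Boundedness of \(H\) follows from the same decomposition, since \(a\) and \(\omega\) are bounded (they are continuous with limits at both ends).

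Next comes the essential spectrum. The operators \(\chi_0C_\rho\chi_0\) and \(\chi_\infty C_\rho\chi_\infty\) act, modulo compacts, on the two half-lines in logarithmic variables. Their essential spectra are each \([0,\pi_\rho]\): use Weyl sequences supported far out in the channel, built as modulated bumps \(e^{i\xi s}\phi((s-s_n)/L)\), and note that the compression of a convolution to a half-line has the same essential spectrum as the full convolution. Weyl's theorem then gives
\[
  \sigma_{\mathrm{ess}}(H)=q_0[0,\pi_\rho]\cup q_\infty[0,\pi_\rho].
\]
For the Mourre estimate I follow Howland's twisting. The conjugate operator is
\[
  A=\tfrac12\bigl(g(P)\,\theta(Q)+\theta(Q)\,g(P)\bigr),
\]
where \(Q=s\), \(P=-i\partial_s\), and \(\theta\) is a smooth sign function equal to \(-1\) near \(s=-\infty\) (the point \(0\)) and \(+1\) near \(+\infty\). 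The function \(g\) is chosen so that \(i[g(P)\theta(Q),m(P)]\) has principal part \(-\theta'(Q)\cdot\)(something) plus terms of the form \(m'(P)g(P)\theta(Q)\). The simplest choice is \(A_\pm=\pm\frac12(\theta\, m'(P)+m'(P)\,\theta)\)-like generators, i.e.\ \(A=\theta(Q)\cdot\frac12(QS+SQ)\) with \(S=-m'(P)\) localized, so that in each channel
\[
  i[A,q\,m(P)]\approx q\,m'(P)^2 .
\]
The sign issue (\(q_0\) and \(q_\infty\) may have opposite signs) is handled by putting the sign of \(q\) into \(\theta\) in each channel. Then \(i[A,H]\) equals \(\sum_j |q_j|\,m'(P)^2\) localized in channel \(j\), plus compact terms. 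On \(\Delta\), whose closure avoids \(0\) and \(q_j\pi_\rho\) (the images of \(\xi=\infty\) and \(\xi=0\), where \(m'\) vanishes), the function \(m'(\xi)^2\) is bounded below for \(q_jm(\xi)\in\overline\Delta\). This gives the estimate \eqref{eq:Mourre-estimate}. A channel whose range misses \(\Delta\) contributes \(E_\Delta\)-compact terms, because \(E_\Delta(q_jm(P))=0\) there.

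The main obstacle, which I expect to be the technical core, is to show that \([A,R]\) and \([A,[A,H]]\) are bounded and that \(E_\Delta i[A,R]E_\Delta\) is compact. Since \(A\) contains the unbounded factor \(Q\), commuting it with the variable coefficients produces \(D a\), \(D\omega\) and \(Q\cdot(\text{coefficient}-\text{limit})\). The hypothesis \(|D^2f|\le C\langle\log t\rangle^{-2}\), integrated from the endpoints, gives
\[
  |Df|\lesssim\langle\log t\rangle^{-1},\qquad |f-f_{0,\infty}|\lesssim\langle\log t\rangle^{-1}\ \text{near }0\text{ and }\infty .
\]
Each factor of \(Q=\log t\) is therefore compensated by one logarithmic decay, and the double commutator uses \(D^2\). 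I would estimate these terms by Schur tests in logarithmic variables, exploiting the exponential decay of \(k\) and its derivatives to pass weights from \(s\) to \(t\). Once the commutator bounds and the Mourre estimate are in place, Theorem~\ref{thm:Mourre} gives absence of singular continuous spectrum in \(\Delta\) and finiteness of the eigenvalues there. Covering the complement of the finitely many thresholds \(\{0,q_0\pi_\rho,q_\infty\pi_\rho\}\) by such intervals yields \(\sigma_{\mathrm{sc}}(H)=\varnothing\), since a countable set carries no continuous measure and points outside \(\sigma_{\mathrm{ess}}\) carry only discrete spectrum.
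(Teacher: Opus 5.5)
Your route differs from the paper's. You stay on one copy of $L^2(\mathbb R_+)$ and split it with cutoffs. The paper instead uses Howland's doubling $\mathbf H=H\oplus JHJ^*$, twisted by the unitary matrix function $R(x)$, so that $T=\mathcal R\mathbf H\mathcal R^*$ is a \emph{compact} perturbation of $q_0C_\rho\oplus q_\infty C_\rho$. The essential spectrum and the transfer of the Mourre estimate then follow abstractly in the Calkin algebra. The paper's conjugate operator $\sigma_1A\oplus\sigma_2A$, with $A=-i\bigl(x\log x\,\partial_x+\tfrac{\log x+1}{2}\bigr)$ (that is, $\tfrac12(QP+PQ)$ in $s=\log x$), is a first-order differential operator. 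So every commutator is just $B_x+B_y$ applied to a kernel, followed by Schur and Hilbert--Schmidt bounds.

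Your single-channel version can plausibly be carried through: cross terms are Hilbert--Schmidt because $\chi_0\chi_\infty$ has compact support in $s$, and since $\overline\Delta$ lies on one side of $0$, a single sign per interval would even suffice. Two points in your description need fixing, however.
\begin{itemize}
\item Your first two candidates, $\tfrac12(g(P)\theta(Q)+\theta(Q)g(P))$ and $\pm\tfrac12(\theta m'(P)+m'(P)\theta)$, are bounded operators. Because $\theta'$ has compact support, their commutators with $m(P)$ are compact, so they give no Mourre estimate at all. Only a generator containing $Q$ can work.
\item The generator $\theta(Q)\cdot\tfrac12(QS+SQ)$ must be symmetrized and shown to be self-adjoint. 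Also, with $S=-m'(P)$ non-local, each commutator with a variable coefficient becomes a difference quotient under a convolution, which is much heavier than the paper's local computation.
\end{itemize}

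The genuine gap is in what you call the technical core: proving that $E_\Delta i[A,R]E_\Delta$ is compact (in the paper, that $i[\mathbf A,K]$ is compact). Without this the model's Mourre estimate cannot be transferred. Integrating $|D^2f|\le C\langle\log t\rangle^{-2}$ from the endpoints gives only $|Df|=O(\langle\log t\rangle^{-1})$. Then the terms $(\log t)(Df)(t)$ produced by the commutator are merely bounded. So ``each factor of $Q$ is compensated by one logarithmic decay'' yields boundedness of $[A,R]$, not compactness.

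What is needed, and true, is $\langle\log t\rangle(Df)(t)\to0$ as $t\to0$ and $t\to\infty$. This uses the existence of the endpoint limits, not only the $D^2$ bound. The paper argues by contradiction. Put $F(s)=f(e^s)$ and suppose $|F'(s_n)|\ge\varepsilon/s_n$ along $s_n\to\infty$. The bound $|F''|\le Cs^{-2}$ keeps $F'$ within $\varepsilon/(4s_n)$ of $F'(s_n)$ on $[s_n,(1+\delta)s_n]$ for small $\delta$. This forces $|F((1+\delta)s_n)-F(s_n)|\ge3\varepsilon\delta/4$, contradicting the convergence of $F$.

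Your other claimed consequence, $|f-f_{0,\infty}|\lesssim\langle\log t\rangle^{-1}$, is false. The function with $f(t)=1/\log\log t$ for large $t$ (i.e.\ $F(s)=1/\log s$) satisfies every hypothesis. So terms of the form $Q\cdot(\text{coefficient}-\text{limit})$ could not be controlled. Fortunately they do not arise. Commuting with a dilation-type generator only differentiates the coefficients, giving $\log x\,(D\omega)(x)$ and $\log(x+y)\,(Da)(x+y)$ up to bounded factors. It is the vanishing of exactly these quantities at the corners that makes the corner pieces operator-norm limits of Hilbert--Schmidt operators.
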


\begin{proof}
Put
\[
  h_\rho(x,y)=\frac{x^\rho y^\rho}{(x+y)^{1+2\rho}},
\]
and let \(C_\rho\) be the integral operator on \(L^2(\mathbb R_+)\)
with kernel \(h_\rho\).  We organize the proof into five steps.

\smallskip
\noindent\emph{Step 1: the model operator.}
The Mellin transform
\[
  (\mathcal Mf)(\xi)
  =\frac{1}{\sqrt{2\pi}}
   \int_0^\infty x^{-1/2+i\xi}f(x)\dd x
\]
is unitary on \(L^2(\mathbb R_+)\) and satisfies
\begin{equation}\label{eq:Mellin-model}
  \mathcal MC_\rho\mathcal M^*=M_{m_\rho},
  \qquad
  m_\rho(\xi)
  =\frac{|\Gamma(\rho+1/2+i\xi)|^2}{\Gamma(1+2\rho)}.
\end{equation}
The Gamma product formula \cite[Eq.~(5.8.3)]{NIST2010} gives, with
\(s=\rho+1/2\),
\[
  |\Gamma(s+i\xi)|^2
  =\Gamma(s)^2
   \prod_{n=0}^\infty
   \frac{(n+s)^2}{(n+s)^2+\xi^2}.
\]
Hence
\[
  \frac{m_\rho'(\xi)}{m_\rho(\xi)}
  =-2\xi\sum_{n=0}^\infty
   \frac{1}{(n+s)^2+\xi^2},
\]
so
\begin{equation}\label{eq:model-monotonicity}
  -\xi m_\rho'(\xi)>0,
  \qquad \xi\neq0.
\end{equation}
Thus \(m_\rho\) is positive and even, strictly decreasing on
\((0,\infty)\), satisfies \(m_\rho(0)=\pi_\rho\), and tends to zero
as \(|\xi|\to\infty\).  On each of \((0,\infty)\) and
\(( -\infty,0)\), the substitution \(\lambda=m_\rho(\xi)\)
represents \(M_{m_\rho}\) as multiplication by \(\lambda\) on a
weighted \(L^2\)-space over \((0,\pi_\rho)\).  Therefore
\[
  \sigma(C_\rho)=[0,\pi_\rho],
\]
the spectrum is absolutely continuous, and its multiplicity on
\((0,\pi_\rho)\) equals two.

Let \(p=-i\dd/\dd x\), and define
\[
  A=\frac12(x\log x\,p+p\,x\log x)
   =-i\left(
      x\log x\,\frac{\dd}{\dd x}
      +\frac{\log x+1}{2}
    \right).
\]
In the Mellin representation,
\[
  \mathcal M i[A,C_\rho]\mathcal M^*
  =M_{-\xi m_\rho'(\xi)}
\]
and
\[
  \mathcal M[iA,[iA,C_\rho]]\mathcal M^*
  =M_{\xi(\xi m_\rho'(\xi))'}.
\]
The functions \(\xi m_\rho'(\xi)\) and
\(\xi(\xi m_\rho'(\xi))'\) are bounded: they are smooth near
\(\xi=0\), while Stirling's formula gives
\[
  m_\rho(\xi)=O\bigl(|\xi|^{2\rho}e^{-\pi|\xi|}\bigr),
  \qquad |\xi|\to\infty,
\]
with the same exponential factor after one or two differentiations.
Hence
\([A,C_\rho]\) and \([A,[A,C_\rho]]\), initially defined as
quadratic forms on \(\mathcal D(A)\), extend to bounded operators.

\smallskip
\noindent\emph{Step 2: the two-channel model and its strict commutator
estimate.}
The kernel of \(H\) is
\[
  k(x,y)
  =h_\rho(x,y)\omega(x)a(x+y)\overline{\omega(y)}.
\]
Since \(a\) and \(\omega\) are bounded,
\[
  |k(x,y)|
  \leq
  \|a\|_\infty\|\omega\|_\infty^2 h_\rho(x,y).
\]
The boundedness of \(C_\rho\) therefore gives
\[
  |(Hf)(x)|
  \leq
  \|a\|_\infty\|\omega\|_\infty^2(C_\rho|f|)(x).
\]
Thus \(H\) is bounded.  Its kernel satisfies
\(k(x,y)=\overline{k(y,x)}\), so \(H\) is self-adjoint.

Define
\[
  (Jf)(x)=x^{-1}f(x^{-1}).
\]
The operator \(J\) is unitary, and \(\widetilde H=JHJ^*\) has kernel
\[
  \widetilde k(x,y)
  =h_\rho(x,y)\omega(x^{-1})a(x^{-1}+y^{-1})
   \overline{\omega(y^{-1})}.
\]
On
\[
  \mathcal H=L^2(\mathbb R_+)\oplus L^2(\mathbb R_+),
\]
put \(\mathbf H=H\oplus\widetilde H\).  Choose
\(R\in C^\infty((0,\infty);U(2))\) such that
\[
  R(x)=I\quad(0<x\leq1/2),
  \qquad
  R(x)=S:=
  \begin{pmatrix}
    0&-1\\
    1&0
  \end{pmatrix}
  \quad(x\geq2),
\]
and let \(\mathcal R\) be multiplication by \(R(x)\).  Define
\[
  T=\mathcal R\mathbf H\mathcal R^*,
  \qquad
  H_0=q_0C_\rho\oplus q_\infty C_\rho,
  \qquad
  K=T-H_0.
\]
Set
\[
  \sigma_1=
  \begin{cases}
    \operatorname{sgn}(q_0),&q_0\neq0,\\
    1,&q_0=0,
  \end{cases}
  \qquad
  \sigma_2=
  \begin{cases}
    \operatorname{sgn}(q_\infty),&q_\infty\neq0,\\
    1,&q_\infty=0,
  \end{cases}
\]
and
\[
  \mathbf A=\sigma_1A\oplus\sigma_2A.
\]
Then
\[
  i[\mathbf A,H_0]
  \simeq
  M_{|q_0|(-\xi m_\rho'(\xi))}
  \oplus
  M_{|q_\infty|(-\xi m_\rho'(\xi))}.
\]

Let \(\Delta\) be a non-empty bounded open interval satisfying the
condition in the theorem, and define
\[
  \Xi_0
  =\{\xi\in\mathbb R:q_0m_\rho(\xi)\in\overline\Delta\},
  \qquad
  \Xi_\infty
  =\{\xi\in\mathbb R:q_\infty m_\rho(\xi)\in\overline\Delta\}.
\]
If \(q_0=0\), then \(\Xi_0=\varnothing\); if
\(q_\infty=0\), then \(\Xi_\infty=\varnothing\).  Each non-empty
set \(\Xi_0\) or \(\Xi_\infty\) is a compact subset of
\(\mathbb R\setminus\{0\}\).  If \((q_0,q_\infty)\neq(0,0)\),
then \eqref{eq:model-monotonicity} gives
\[
  c_\Delta
  :=\min
  \left(
    \{|q_0|(-\xi m_\rho'(\xi)):\xi\in\Xi_0\}
    \cup
    \{|q_\infty|(-\xi m_\rho'(\xi)):\xi\in\Xi_\infty\}
  \right)>0,
\]
where the empty set is omitted from the union.  The spectral calculus
then gives
\[
  E_\Delta(H_0)i[\mathbf A,H_0]E_\Delta(H_0)
  \geq c_\Delta E_\Delta(H_0).
\]

\smallskip
\noindent\emph{Step 3: two kernel estimates.}
The matrix kernel of \(K\) is
\begin{equation}\label{eq:matrix-perturbation-kernel}
  \mathbf K(x,y)
  =R(x)
   \begin{pmatrix}
     k(x,y)&0\\
     0&\widetilde k(x,y)
   \end{pmatrix}
   R(y)^*
   -h_\rho(x,y)
   \begin{pmatrix}
     q_0&0\\
     0&q_\infty
   \end{pmatrix}.
\end{equation}
Write
\[
  B_x=x\log x\,\partial_x+\frac{\log x+1}{2},
  \qquad
  B_y=y\log y\,\partial_y+\frac{\log y+1}{2},
\]
and
\[
  \mathcal D_{jk}=\sigma_jB_x+\sigma_kB_y,
  \qquad j,k\in\{1,2\}.
\]
The \((j,k)\)-entry of \(i[\mathbf A,K]\) has kernel
\(\mathcal D_{jk}K_{jk}\).  The \((j,k)\)-entry of
\([i\mathbf A,[i\mathbf A,K]]\) has kernel
\(\mathcal D_{jk}^2K_{jk}\).

We first derive the estimates on \(a\) and \(\omega\) that will be
used below.  Let \(f=a\) or \(f=\omega\), and put
\(F(s)=f(e^s)\).  The function \(F\) has finite limits as
\(s\to\pm\infty\), and
\[
  F''(s)=(D^2f)(e^s)=O(\langle s\rangle^{-2}).
\]
We claim that
\begin{equation}\label{eq:first-derivative-decay}
  (Df)(t)=o(\langle\log t\rangle^{-1}),
  \qquad t\to0\ \text{or}\ t\to\infty.
\end{equation}
It is enough to prove \(sF'(s)\to0\) as \(s\to\infty\).  Applying
the argument below to the function \(s\mapsto F(-s)\) proves
\(sF'(s)\to0\) as \(s\to-\infty\).
Choose \(C>0\) and \(s_0>0\) such that
\[
  |F''(s)|\leq Cs^{-2},
  \qquad s\geq s_0.
\]
If \(sF'(s)\not\to0\), there are \(\varepsilon>0\) and
\(s_n\to\infty\) such that
\[
  |F'(s_n)|\geq\frac{\varepsilon}{s_n}.
\]
Choose \(\eta_n\in\mathbb C\), \(|\eta_n|=1\), with
\(\eta_nF'(s_n)=|F'(s_n)|\), and choose \(\delta>0\) so that
\(C\delta\leq\varepsilon/4\).  For sufficiently large \(n\) and
\(0\leq h\leq\delta s_n\),
\[
  |F'(s_n+h)-F'(s_n)|
  \leq
  \int_{s_n}^{s_n+h}\frac{C}{t^2}\dd t
  \leq\frac{C\delta}{s_n}.
\]
Therefore
\[
  \operatorname{Re}\bigl(\eta_nF'(s_n+h)\bigr)
  \geq\frac{3\varepsilon}{4s_n},
  \qquad 0\leq h\leq\delta s_n,
\]
and hence
\[
  |F((1+\delta)s_n)-F(s_n)|
  \geq\frac{3\varepsilon\delta}{4}.
\]
This contradicts the existence of \(\lim_{s\to\infty}F(s)\), and
proves \eqref{eq:first-derivative-decay}.  The hypotheses and
\eqref{eq:first-derivative-decay} also give
\begin{equation}\label{eq:coefficient-global-bounds}
  \sup_{t>0}
  \bigl(|f(t)|+|(Df)(t)|+|(D^2f)(t)|\bigr)<\infty,
  \qquad f=a,\omega.
\end{equation}
For \(f=a,\omega\),
\[
  t\partial_tf(t^{-1})=-(Df)(t^{-1}),
  \qquad
  (t\partial_t)^2f(t^{-1})=(D^2f)(t^{-1}),
\]
so \eqref{eq:coefficient-global-bounds} also controls
\(f(t^{-1})\) and its first two derivatives with respect to
\(t\partial_t\).

Set
\[
  \theta=\frac{x}{x+y},
  \qquad
  L(\theta)=1+|\log\theta|+|\log(1-\theta)|.
\]
The identities
\[
  \frac{x\partial_xh_\rho}{h_\rho}
  =\rho-(1+2\rho)\theta,
  \qquad
  \frac{y\partial_yh_\rho}{h_\rho}
  =\rho-(1+2\rho)(1-\theta),
\]
and
\[
  x\partial_x\theta=\theta(1-\theta),
  \qquad
  y\partial_y\theta=-\theta(1-\theta)
\]
give
\[
\begin{aligned}
  \frac{(x\partial_x)^2h_\rho}{h_\rho}
  &=\bigl(\rho-(1+2\rho)\theta\bigr)^2
    -(1+2\rho)\theta(1-\theta),\\
  \frac{(y\partial_y)^2h_\rho}{h_\rho}
  &=\bigl(\rho-(1+2\rho)(1-\theta)\bigr)^2
    -(1+2\rho)\theta(1-\theta),\\
  \frac{(x\partial_x)(y\partial_y)h_\rho}{h_\rho}
  &=\bigl(\rho-(1+2\rho)\theta\bigr)
    \bigl(\rho-(1+2\rho)(1-\theta)\bigr)
    +(1+2\rho)\theta(1-\theta).
\end{aligned}
\]
Thus, for \(p,q\in\mathbb N_0\) with \(p+q\leq2\),
\begin{equation}\label{eq:model-kernel-derivatives}
  |(x\partial_x)^p(y\partial_y)^qh_\rho(x,y)|
  \leq C_{pq}h_\rho(x,y).
\end{equation}

For \(m=0,1,2\), the substitution \(y=xt\) gives
\[
\begin{split}
  &\sup_{x>0}x^{1/2}\int_0^\infty
   h_\rho(x,y)L(\theta)^m y^{-1/2}\dd y\\
  &\qquad=
  \int_0^\infty
  \frac{t^{\rho-1/2}}{(1+t)^{1+2\rho}}
  L\!\left(\frac{1}{1+t}\right)^m\dd t
  <\infty.
\end{split}
\]
Interchanging \(x\) and \(y\) replaces \(\theta\) by \(1-\theta\),
and \(L(\theta)=L(1-\theta)\); hence the second Schur integral has
the same finite value.  The integrals near \(t=0\) and
\(t=\infty\) are finite because \(\rho>-1/2\).  Schur's test
therefore shows that the integral operators with kernels
\begin{equation}\label{eq:schur-kernels}
  h_\rho(x,y)L(\theta)^m,
  \qquad m=0,1,2,
\end{equation}
are bounded on \(L^2(\mathbb R_+)\).

We now establish the first of the two kernel estimates.  Put
\[
  P=x\log x\,\partial_x+y\log y\,\partial_y.
\]
A calculation using \(x=\theta(x+y)\) and
\(y=(1-\theta)(x+y)\) gives
\[
\frac{(B_x+B_y)h_\rho}{h_\rho}
  =1+\left(\rho+\frac12\right)(1-2\theta)
    \log\frac{\theta}{1-\theta}.
\]
Moreover,
\[
\begin{split}
  &P\left[
    1+\left(\rho+\frac12\right)(1-2\theta)
    \log\frac{\theta}{1-\theta}
  \right]\\
  &\quad=
  \left(\rho+\frac12\right)
  \left[
    -2\theta(1-\theta)
     \left(\log\frac{\theta}{1-\theta}\right)^2
    +(1-2\theta)\log\frac{\theta}{1-\theta}
  \right].
\end{split}
\]
More explicitly,
\[
\begin{aligned}
  &\left|1+\left(\rho+\frac12\right)(1-2\theta)
    \log\frac{\theta}{1-\theta}\right|
  \leq CL(\theta),\\
  &\left|\left(\rho+\frac12\right)
  \left[
    -2\theta(1-\theta)
     \left(\log\frac{\theta}{1-\theta}\right)^2
    +(1-2\theta)\log\frac{\theta}{1-\theta}
  \right]\right|
  \leq CL(\theta)^2.
\end{aligned}
\]
The two inequalities just proved can be written as
\[
  \left|
    \frac{(B_x+B_y)h_\rho}{h_\rho}
  \right|
  \leq CL(\theta),
  \qquad
  \left|
    P\left[
      \frac{(B_x+B_y)h_\rho}{h_\rho}
    \right]
  \right|
  \leq CL(\theta)^2.
\]
The first inequality gives
\[
  |(B_x+B_y)h_\rho|
  \leq Ch_\rho L(\theta),
\]
and
\[
\begin{aligned}
  |(B_x+B_y)^2h_\rho|
  &\leq h_\rho
  \left|
    P\left[
      \frac{(B_x+B_y)h_\rho}{h_\rho}
    \right]
  \right|
  +h_\rho
  \left|
    \frac{(B_x+B_y)h_\rho}{h_\rho}
  \right|^2\\
  &\leq Ch_\rho L(\theta)^2.
\end{aligned}
\]
For every \(c\in C^2((0,\infty)^2;\mathbb C)\), the product rule
gives the exact identities
\[
\begin{aligned}
  (B_x+B_y)(h_\rho c)
  &=h_\rho Pc+((B_x+B_y)h_\rho)c,\\
  (B_x+B_y)^2(h_\rho c)
  &=h_\rho P^2c
    +2((B_x+B_y)h_\rho)Pc
    +((B_x+B_y)^2h_\rho)c.
\end{aligned}
\]
Consequently, for every
\(c\in C^2((0,\infty)^2;\mathbb C)\),
\begin{equation}\label{eq:diagonal-master-estimate}
  |(B_x+B_y)^r(h_\rho c)|
  \leq
  C h_\rho L(\theta)^r
  \sum_{\ell=0}^r|P^\ell c|,
  \qquad r=0,1,2.
\end{equation}

On \((0,1/2]^2\), formula
\eqref{eq:matrix-perturbation-kernel} becomes
\[
  K_{11}=h_\rho c_0,
  \qquad
  K_{22}=h_\rho\widetilde c_0,
  \qquad
  K_{12}=K_{21}=0,
\]
where
\[
\begin{aligned}
  c_0
  &=\omega(x)a(x+y)\overline{\omega(y)}-q_0,\\
  \widetilde c_0
  &=\omega(x^{-1})a(x^{-1}+y^{-1})
    \overline{\omega(y^{-1})}-q_\infty.
\end{aligned}
\]
On \([2,\infty)^2\), it becomes
\[
  K_{11}=h_\rho\widetilde c_\infty,
  \qquad
  K_{22}=h_\rho c_\infty,
  \qquad
  K_{12}=K_{21}=0,
\]
where
\[
\begin{aligned}
  c_\infty
  &=\omega(x)a(x+y)\overline{\omega(y)}-q_\infty,\\
  \widetilde c_\infty
  &=\omega(x^{-1})a(x^{-1}+y^{-1})
    \overline{\omega(y^{-1})}-q_0.
\end{aligned}
\]

  For \(u=x+y\) and
\(\tau=x^{-1}+y^{-1}\),
\[
\begin{aligned}
  P[a(u)]
  &=\bigl(\theta\log x+(1-\theta)\log y\bigr)(Da)(u),\\
  P[a(\tau)]
  &=\bigl((1-\theta)\log(x^{-1})
     +\theta\log(y^{-1})\bigr)(Da)(\tau).
\end{aligned}
\]
Since
\[
\begin{aligned}
  \theta\log x+(1-\theta)\log y
  &=\log u+\theta\log\theta
    +(1-\theta)\log(1-\theta),\\
  (1-\theta)\log(x^{-1})+\theta\log(y^{-1})
  &=\log\tau+(1-\theta)\log(1-\theta)
    +\theta\log\theta,
\end{aligned}
\]
we obtain
\begin{equation}\label{eq:composite-first-estimates}
\begin{aligned}
  |P[a(u)]|
  &\leq C\langle\log u\rangle |(Da)(u)|,\\
  |P[a(\tau)]|
  &\leq C\langle\log\tau\rangle |(Da)(\tau)|.
\end{aligned}
\end{equation}
Differentiating the two displayed identities once more and using
\[
  \sup_{0<\theta<1}
  \theta(1-\theta)
  \left(\log\frac{\theta}{1-\theta}\right)^2<\infty
\]
gives
\begin{equation}\label{eq:composite-second-estimates}
\begin{aligned}
  |P^2[a(u)]|
  &\leq C\left(
    \langle\log u\rangle |(Da)(u)|
    +\langle\log u\rangle^2 |(D^2a)(u)|
  \right),\\
  |P^2[a(\tau)]|
  &\leq C\left(
    \langle\log\tau\rangle |(Da)(\tau)|
    +\langle\log\tau\rangle^2 |(D^2a)(\tau)|
  \right).
\end{aligned}
\end{equation}
For the one-variable factors,
\[
\begin{aligned}
  P[\omega(x)]
  &=(\log x)(D\omega)(x),\\
  P[\omega(x^{-1})]
  &=\log(x^{-1})(D\omega)(x^{-1}),\\
  P^2[\omega(x)]
  &=(\log x)(D\omega)(x)
    +(\log x)^2(D^2\omega)(x),\\
  P^2[\omega(x^{-1})]
  &=\log(x^{-1})(D\omega)(x^{-1})
    +(\log(x^{-1}))^2(D^2\omega)(x^{-1}).
\end{aligned}
\]
Replacing \(x\) by \(y\) gives the four identities for the factors
\(\omega(y)\) and \(\omega(y^{-1})\).
Equations \eqref{eq:first-derivative-decay},
\eqref{eq:composite-first-estimates}, and the product rule give
\begin{equation}\label{eq:coefficient-first-limits}
\begin{aligned}
  |c_0|+|Pc_0|+|\widetilde c_0|+|P\widetilde c_0|
  &\longrightarrow0,
  &&x,y\to0,\\
  |c_\infty|+|Pc_\infty|
  +|\widetilde c_\infty|+|P\widetilde c_\infty|
  &\longrightarrow0,
  &&x,y\to\infty.
\end{aligned}
\end{equation}
By \eqref{eq:first-derivative-decay}, the hypothesis on \(D^2a\), and
\eqref{eq:composite-second-estimates},
\begin{equation}\label{eq:coefficient-second-bounds}
\begin{aligned}
  |P^2c_0|+|P^2\widetilde c_0|&\leq C,
  &&0<x,y\leq1/2,\\
  |P^2c_\infty|+|P^2\widetilde c_\infty|&\leq C,
  &&x,y\geq2.
\end{aligned}
\end{equation}
Since \(\mathcal D_{jj}=\sigma_j(B_x+B_y)\),
\eqref{eq:diagonal-master-estimate},
\eqref{eq:first-derivative-decay}, and
\eqref{eq:composite-first-estimates} give, for \(r=0,1\),
\begin{equation}\label{eq:diagonal-first-global-bound}
  \sum_{j=1}^2|\mathcal D_{jj}^rK_{jj}(x,y)|
  \leq Ch_\rho(x,y)L(\theta)^r
\end{equation}
on \((0,1/2]^2\cup[2,\infty)^2\).  Moreover,
\eqref{eq:diagonal-master-estimate} and
\eqref{eq:coefficient-first-limits} imply, for \(r=0,1\),
\begin{equation}\label{eq:diagonal-vanishing-estimates}
\begin{aligned}
  \lim_{\delta\downarrow0}
  \sup_{0<x,y\leq\delta}
  \frac{\sum_{j=1}^2|\mathcal D_{jj}^rK_{jj}(x,y)|}
       {h_\rho(x,y)L(\theta)^r}
  &=0,\\
  \lim_{R\to\infty}
  \sup_{x,y\geq R}
  \frac{\sum_{j=1}^2|\mathcal D_{jj}^rK_{jj}(x,y)|}
       {h_\rho(x,y)L(\theta)^r}
  &=0.
\end{aligned}
\end{equation}
Equation \eqref{eq:diagonal-master-estimate} with \(r=2\), together with
\eqref{eq:coefficient-second-bounds}, gives
\begin{equation}\label{eq:diagonal-second-estimate}
  \sum_{j=1}^2|\mathcal D_{jj}^2K_{jj}(x,y)|
  \leq Ch_\rho(x,y)L(\theta)^2
\end{equation}
on \((0,1/2]^2\cup[2,\infty)^2\).

We next establish one estimate on
\[
  \mathbb R_+^2\setminus
  \bigl((0,1/2]^2\cup[2,\infty)^2\bigr).
\]
The hypotheses imply, for \(p+q\leq2\),
\[
\begin{aligned}
  &|(x\partial_x)^p(y\partial_y)^q a(x+y)|
   +|(x\partial_x)^p(y\partial_y)^q
      a(x^{-1}+y^{-1})|
  \leq C_{pq},\\
  &|(x\partial_x)^p\omega(x)|
   +|(x\partial_x)^p\omega(x^{-1})|
  \leq C_p,\\
  &|(y\partial_y)^q\omega(y)|
   +|(y\partial_y)^q\omega(y^{-1})|
  \leq C_q.
\end{aligned}
\]
The choice of \(R\) gives
\[
  \sup_{t>0}\sum_{\ell=0}^2
  |(t\partial_t)^\ell R(t)|<\infty.
\]
The product rule in \eqref{eq:matrix-perturbation-kernel}, together
with \eqref{eq:model-kernel-derivatives}, therefore gives
\begin{equation}\label{eq:global-scaled-kernel-estimate}
  \sum_{p+q\leq2}
  |(x\partial_x)^p(y\partial_y)^qK_{jk}(x,y)|
  \leq Ch_\rho(x,y),
  \qquad j,k\in\{1,2\}.
\end{equation}
Define
\[
  v_\rho(t)=
  \begin{cases}
    t^\rho(1+|\log t|),&0<t\leq1/2,\\
    1,&1/2<t<2,\\
    t^{-1-\rho}(1+\log t),&t\geq2.
  \end{cases}
\]
The condition \(\rho>-1/2\) gives
\begin{equation}\label{eq:L2-majorant}
  \int_0^\infty v_\rho(t)^2\dd t<\infty.
\end{equation}
If
\[
  (x,y)\notin(0,1/2]^2\cup[2,\infty)^2,
\]
then either \(x,y\in[1/2,2]\), one of \(x,y\) lies in
\([1/2,2]\), or
\[
  \min(x,y)\leq1/2,
  \qquad
  \max(x,y)\geq2.
\]
On \([1/2,2]^2\) one has \(x+y\asymp1\).  At every other point in
the displayed complement one has
\(x+y\asymp\max(x,y)\).  These two relations give
\[
  h_\rho(x,y)
  (1+|\log x|+|\log y|)
  \leq Cv_\rho(x)v_\rho(y).
\]
Equation \eqref{eq:global-scaled-kernel-estimate} and the definition
of \(\mathcal D_{jk}\) now give the second kernel estimate:
\begin{equation}\label{eq:off-diagonal-L2-estimate}
  |K_{jk}(x,y)|+|\mathcal D_{jk}K_{jk}(x,y)|
  \leq Cv_\rho(x)v_\rho(y)
\end{equation}
whenever
\((x,y)\notin(0,1/2]^2\cup[2,\infty)^2\).

\smallskip
\noindent\emph{Step 4: compactness and the two commutators.}
We first prove
\begin{equation}\label{eq:compact-first-commutator}
  K\in\mathcal K(\mathcal H),
  \qquad
  i[\mathbf A,K]\in\mathcal K(\mathcal H).
\end{equation}
Fix \(r=0\) or \(r=1\).  By
\eqref{eq:diagonal-vanishing-estimates} and the Schur bounds
\eqref{eq:schur-kernels}, the operator norms of the integral
operators with kernels
\[
  \ind_{(0,\delta]}(x)
  \mathcal D_{jj}^rK_{jj}(x,y)
  \ind_{(0,\delta]}(y)
\]
tend to zero as \(\delta\downarrow0\).  For every fixed
\(\delta\in(0,1/2)\),
\[
\begin{split}
  &\iint_{\substack{0<x,y\leq1/2\\\max(x,y)\geq\delta}}
  h_\rho(x,y)^2L(\theta)^{2r}\dd x\dd y\\
  &\qquad\leq
  2\int_\delta^{1/2}\int_0^{1/2}
  h_\rho(x,y)^2L(\theta)^{2r}\dd y\dd x
  <\infty.
\end{split}
\]
The double integral on the right is finite because \(\rho>-1/2\).  Hence the kernels obtained by
removing \((0,\delta]^2\) from \((0,1/2]^2\) define
Hilbert--Schmidt operators by
\eqref{eq:diagonal-first-global-bound}.  The integral operators defined by
\(\mathcal D_{jj}^rK_{jj}\) on \((0,1/2]^2\) are therefore
operator-norm limits of Hilbert--Schmidt operators and are compact.

Likewise, the operator norms of the kernels restricted to
\([R,\infty)^2\) tend to zero as \(R\to\infty\).  For fixed
\(R>2\),
\[
\begin{split}
  &\iint_{\substack{x,y\geq2\\\min(x,y)\leq R}}
  h_\rho(x,y)^2L(\theta)^{2r}\dd x\dd y\\
  &\qquad\leq
  2\int_2^R\int_2^\infty
  h_\rho(x,y)^2L(\theta)^{2r}\dd y\dd x
  <\infty.
\end{split}
\]
Thus the integral operators
defined by \(\mathcal D_{jj}^rK_{jj}\) on \([2,\infty)^2\) are
operator-norm limits of Hilbert--Schmidt operators, by
\eqref{eq:diagonal-first-global-bound}, and are compact.

On the complement of the two squares,
\eqref{eq:off-diagonal-L2-estimate} and
\eqref{eq:L2-majorant} give
\[
\begin{split}
  &\iint_{\mathbb R_+^2\setminus
  ((0,1/2]^2\cup[2,\infty)^2)}
  \left(
    |K_{jk}(x,y)|^2
    +|\mathcal D_{jk}K_{jk}(x,y)|^2
  \right)\dd x\dd y\\
  &\qquad\leq
  C\left(\int_0^\infty v_\rho(t)^2\dd t\right)^2
  <\infty.
\end{split}
\]
These kernels define Hilbert--Schmidt operators.  Combining the three
disjoint parts of \(\mathbb R_+^2\) proves
\eqref{eq:compact-first-commutator}.

It remains to prove that the second commutator is bounded.  On
\((0,1/2]^2\cup[2,\infty)^2\), estimate
\eqref{eq:diagonal-second-estimate} gives
\[
  |\mathcal D_{jk}^2K_{jk}(x,y)|
  \leq Ch_\rho(x,y)L(\theta)^2.
\]
On
\(\mathbb R_+^2\setminus((0,1/2]^2\cup[2,\infty)^2)\),
\eqref{eq:global-scaled-kernel-estimate} gives
\[
  |\mathcal D_{jk}^2K_{jk}(x,y)|
  \leq
  Ch_\rho(x,y)(1+|\log x|+|\log y|)^2.
\]
For every
\((x,y)\in\mathbb R_+^2\setminus((0,1/2]^2\cup[2,\infty)^2)\),
\[
  1+|\log x|+|\log y|
  \leq
  C\left(1+\left|\log\frac{x}{y}\right|\right)
  \leq CL(\theta),
\]
because \(x/y=\theta/(1-\theta)\).  We have therefore proved
\[
|\mathcal D_{jk}^2K_{jk}(x,y)|
  \leq Ch_\rho(x,y)L(\theta)^2,
  \qquad x,y>0.
\]
The Schur bound \eqref{eq:schur-kernels} with \(m=2\) yields
\[
  [i\mathbf A,[i\mathbf A,K]]\in\mathcal B(\mathcal H).
\]
Since the first two commutators of \(H_0\) with \(\mathbf A\) are
bounded, the quadratic-form commutators
\[
  [\mathbf A,T]
  \qquad\text{and}\qquad
  [\mathbf A,[\mathbf A,T]]
\]
extend to bounded operators.

\smallskip
\noindent\emph{Step 5: transfer of the Mourre estimate and spectral
consequences.}
Assume first that \((q_0,q_\infty)\neq(0,0)\).  Fix a non-empty
bounded open interval \(\Delta\) satisfying the condition in the
theorem.  Choose a bounded open interval \(\Delta_1\) such that
\[
  \overline\Delta\subset\Delta_1,
  \qquad
  \overline{\Delta_1}
  \subset
  \bigl(q_0(0,\pi_\rho)\cup q_\infty(0,\pi_\rho)\bigr)
  \setminus\{q_0\pi_\rho,q_\infty\pi_\rho\}.
\]
Put
\[
  E=E_\Delta(T),
  \qquad
  E_1=E_{\Delta_1}(H_0).
\]
Step 2 gives a constant \(c_1>0\) such that
\begin{equation}\label{eq:model-Mourre-expanded}
  E_1i[\mathbf A,H_0]E_1\geq c_1E_1.
\end{equation}

Let
\[
  \varkappa:\mathcal B(\mathcal H)
  \longrightarrow
  \mathcal B(\mathcal H)/\mathcal K(\mathcal H)
\]
be the quotient map.  Choose \(\varphi\in C_c(\mathbb R)\) such that
\[
  \varphi=1
  \quad\text{on a neighborhood of }\overline\Delta,
  \qquad
  \operatorname{supp}\varphi\subset\Delta_1.
\]
The compactness of \(T-H_0\) gives
\[
  \varkappa(\varphi(T))=\varkappa(\varphi(H_0)).
\]
Since
\[
  E\varphi(T)=E,
  \qquad
  \varphi(H_0)(1-E_1)=0,
\]
we have
\[
\begin{aligned}
  \varkappa(E(1-E_1))
  &=\varkappa(E\varphi(T)(1-E_1))\\
  &=\varkappa(E(\varphi(T)-\varphi(H_0))(1-E_1))
  =0.
\end{aligned}
\]
Thus
\[
  \varkappa(E)=\varkappa(EE_1)=\varkappa(E_1E).
\]
Furthermore, \(i[\mathbf A,T]-i[\mathbf A,H_0]\) is compact by
\eqref{eq:compact-first-commutator}, and hence
\[
\begin{aligned}
  \varkappa(Ei[\mathbf A,T]E)
  &=\varkappa(Ei[\mathbf A,H_0]E)\\
  &=\varkappa(EE_1i[\mathbf A,H_0]E_1E).
\end{aligned}
\]
Applying the positive map \(\varkappa\) to
\eqref{eq:model-Mourre-expanded} and multiplying by
\(\varkappa(E)\) on both sides gives
\[
  \varkappa(Ei[\mathbf A,T]E)
  \geq c_1\varkappa(E).
\]
Therefore
\[
  \varkappa(Ei[\mathbf A,T]E-c_1E)\geq0.
\]
By the lifting property for positive elements of a quotient
\(C^*\)-algebra, there is a compact self-adjoint operator
\(K_\Delta\) such that
\begin{equation}\label{eq:transferred-Mourre}
  E_\Delta(T)i[\mathbf A,T]E_\Delta(T)
  \geq c_1E_\Delta(T)+K_\Delta.
\end{equation}

Since \(K=T-H_0\) is compact,
\[
  \sigma_{\mathrm{ess}}(T)
  =\sigma_{\mathrm{ess}}(H_0)
  =q_0[0,\pi_\rho]\cup q_\infty[0,\pi_\rho].
\]
The bounded commutator extensions established in Step~4 verify the
commutator assumptions in Theorem~\ref{thm:Mourre}, and
\eqref{eq:transferred-Mourre} has the form
\eqref{eq:Mourre-estimate} on \(\Delta\).  Thus
\[
  \sigma_{\mathrm{sc}}(T)\cap\Delta=\varnothing,
\]
and the eigenvalues of \(T\) in \(\Delta\) form a finite set and have
finite multiplicity.

The preceding argument applies to every bounded open interval whose
closure is contained in
\[
  \Sigma_{\mathrm{reg}}
  =\bigl(q_0(0,\pi_\rho)\cup q_\infty(0,\pi_\rho)\bigr)
   \setminus\{q_0\pi_\rho,q_\infty\pi_\rho\}.
\]
Outside the essential spectrum, the spectrum consists only of isolated
eigenvalues of finite multiplicity, so there is no singular continuous
spectrum there.  Every singular continuous spectral measure of \(T\)
is therefore supported on the finite set
\[
  \Theta=\{0,q_0\pi_\rho,q_\infty\pi_\rho\}.
\]
A singular continuous measure has no atoms and cannot be supported on a
finite set.  Hence
\[
  \sigma_{\mathrm{sc}}(T)=\varnothing.
\]

Finally, \(T\) is unitarily equivalent to \(H\oplus JHJ^*\), and
\(JHJ^*\) is unitarily equivalent to \(H\).  All the assertions
therefore follow for \(H\).  If \(q_0=q_\infty=0\), then \(T\) is
compact and the same conclusions are immediate.
\end{proof}

The four finite endpoint limits already imply that \(K\) is compact,
so the derivative assumption is unnecessary for the
essential-spectrum identity.  The bound on \(D^2a\) and \(D^2\omega\)
is only used in the Mourre argument; together with the endpoint limits
it yields
\[ |\log t|\,Df(t)\to0, \qquad t\to0,\infty, \]
which gives compactness of the first perturbation commutator and
boundedness of the second.

\begin{proof}[Proof of part~(2) of Theorem~\ref{thm:complete-spectrum-Ta}]
Fix \(\alpha>1\), set
\[
  \rho=\frac{\alpha-1}{2},
  \qquad
  \pi_\rho=\frac{\Gamma(\alpha/2)^2}{\Gamma(\alpha)}=\Lambda_\alpha.
\]

We next verify the hypotheses needed for the singular continuous spectrum.
Apply Theorem~\ref{thm:weighted-Hankel-no-sc} to
\(K_\alpha\).  In the notation preceding that theorem, take
\[
  b(t)=\zeta(1+t)^\alpha-1,
  \qquad w(t)=t^\rho.
\]
The corresponding normalized functions are
\[
  a(t)=t^\alpha\bigl(\zeta(1+t)^\alpha-1\bigr),
  \qquad \omega(t)=1.
\]
The Laurent expansion of \(\zeta\) at \(1\) gives
\[
  a(t)=(t\zeta(1+t))^\alpha-t^\alpha
  =1+O(t)-t^\alpha,
  \qquad t\to0.
\]
Applying \(D=t\partial_t\) once or twice to the same identity shows that,
for \(j=0,1,2\),
\[
  D^j\left(a(t)-1\right)
  =O(t)+O(t^\alpha),
  \qquad t\to0.
\]
At infinity, \(\zeta(1+t)-1=O(2^{-t})\), and its first two
derivatives satisfy the same exponential estimate up to polynomial factors.
Hence, for \(j=0,1,2\),
\[ D^j a(t)=O(t^{\alpha+2}2^{-t}), \qquad t\to\infty. \]
Together with continuity on compact subintervals, these estimates give
\[
  |(D^2a)(t)|\leq C\langle\log t\rangle^{-2},
  \qquad t>0.
\]
Since \(\omega\equiv1\), the corresponding condition for
\(\omega\) is immediate.  Therefore
\[
  a_0=1, \qquad a_\infty=0,
  \qquad \omega_0=\omega_\infty=1,
\]
so that \(q_0=1\) and \(q_\infty=0\).  Theorem~\ref{thm:weighted-Hankel-no-sc}
now yields
\[ \sigma_{\mathrm{ess}}(K_\alpha) =[0,\Lambda_\alpha], \qquad \sigma_{\mathrm{sc}}(K_\alpha)=\varnothing. \]
Removing \(\ker K_\alpha\) does not alter either the absolutely
continuous or the singular continuous part.  The unitary equivalence
in Theorem~\ref{thm:Ta-unitary-Ka} therefore gives
\[
  \sigma_{\mathrm{ess}}(T_\alpha)=[0,\Lambda_\alpha],
  \qquad
  \sigma_{\mathrm{sc}}(T_\alpha)=\varnothing.
\]

It remains to determine the absolutely continuous spectrum.  Apply
\cite[Theorem~1.3]{FedelePushnitski2017} to \(K_\alpha\) with $\epsilon=1$,
\[
  \sigma_{\mathrm{ac}}(K_\alpha)
  =[0,\pi_\rho]
  =[0,\Lambda_\alpha],
\]
with multiplicity one.  Theorem~\ref{thm:Ta-unitary-Ka} transfers this conclusion to
\(T_\alpha\).
\end{proof}

\section{Discrete spectrum above the essential spectrum}

\subsection{Finiteness and simplicity above the essential spectrum}

Retain the notation \(\rho\), \(\Lambda_\alpha\), and
\(K_\alpha\) from the preceding proof.  Apply the
construction in the proof of Theorem~\ref{thm:weighted-Hankel-no-sc} to
\(K_\alpha\).  The endpoint values, verified in the proof of
Theorem~\ref{thm:complete-spectrum-Ta}, give
\[
  H_0=C_\rho\oplus0,
  \qquad T=H_0+K.
\]
Moreover, \(T\) is unitarily equivalent to two copies of
\(K_\alpha\).  We use
\[
  \psi_1(s)=\frac{\dd^2}{\dd s^2}\log\Gamma(s)
  =\sum_{n=0}^\infty\frac1{(n+s)^2},
  \qquad s>0.
\]

We first establish the resolvent expansion at the upper endpoint that
is needed for the finiteness argument.

Let
\[ \ell(x)=(1+|\log x|)^{-2}, \]
and let \(L\) be multiplication by \(\ell\) on both components of
\(\mathcal H=L^2(\mathbb R_+)\oplus L^2(\mathbb R_+)\).

\begin{lemma}\label{lem:model-resolvent-threshold}
There is a bounded self-adjoint operator \(G_\alpha\) on \(\mathcal H\)
and a vector
\[ e= \bigl(x^{-1/2}(1+|\log x|)^{-2},0\bigr) \in\mathcal H \]
such that
\begin{equation}\label{eq:model-resolvent-expansion} \lim_{\lambda\downarrow\Lambda_\alpha} \left\| L(\lambda-H_0)^{-1}L - \frac{\langle\,\cdot\,,e\rangle e} {2\sqrt{\nu_\alpha(\lambda-\Lambda_\alpha)}} - G_\alpha \right\| =0,\end{equation}
where
\[ \nu_\alpha = \Lambda_\alpha\psi_1\left(\frac\alpha2\right). \]
\end{lemma}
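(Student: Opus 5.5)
Since \(H_0=C_\rho\oplus0\), the second component contributes \(L(\lambda-0)^{-1}L=\lambda^{-1}L^2\) on the second summand. This term converges in norm to \(\Lambda_\alpha^{-1}L^2\) as \(\lambda\downarrow\Lambda_\alpha\), and we put it into \(G_\alpha\). Everything therefore reduces to the first component, \(\ell(\lambda-C_\rho)^{-1}\ell\) on \(L^2(\mathbb R_+)\). We conjugate by the Mellin transform \(\mathcal M\) of Step~1 in the proof of Theorem~\ref{thm:weighted-Hankel-no-sc}, which turns \((\lambda-C_\rho)^{-1}\) into multiplication by \((\lambda-m_\rho(\xi))^{-1}\). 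The operator \(\ell(\lambda-C_\rho)^{-1}\ell\) then has the integral kernel
\[
  k_\lambda(x,y)=\frac{1}{2\pi}\,\ell(x)\ell(y)(xy)^{-1/2}\int_{\mathbb R}\frac{(x/y)^{-i\xi}}{\lambda-m_\rho(\xi)}\dd\xi .
\]
The factor \(\ell(x)x^{-1/2}\) is exactly the profile of \(e\). Since \(\ell\in L^2(\mathbb R_+,\dd x/x)\) (because \(\int(1+|s|)^{-4}\dd s<\infty\)), we have \(e\in\mathcal H\).

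Next we split the symbol. Using \(m_\rho(\xi)=\Lambda_\alpha-\nu_\alpha\xi^2+O(\xi^4)\), with \(\nu_\alpha=\Lambda_\alpha\psi_1(\alpha/2)\) read off from the logarithmic derivative formula in Step~1 (since \(-\tfrac12(\log m_\rho)''(0)=\psi_1(s)\)), we write
\[
  \frac{1}{\lambda-m_\rho(\xi)}=\frac{1}{\varepsilon+\nu_\alpha\xi^2}+r_\lambda(\xi),\qquad \varepsilon=\lambda-\Lambda_\alpha .
\]
The first piece is explicit:
\[
  \frac1{2\pi}\int_{\mathbb R}\frac{e^{-i\xi s}}{\varepsilon+\nu_\alpha\xi^2}\dd\xi=\frac{e^{-\sqrt{\varepsilon/\nu_\alpha}\,|s|}}{2\sqrt{\nu_\alpha\varepsilon}},\qquad s=\log(x/y).
\]
We decompose it as \(\frac{1}{2\sqrt{\nu_\alpha\varepsilon}}+\frac{e^{-\kappa|s|}-1}{2\sqrt{\nu_\alpha\varepsilon}}\) with \(\kappa=\sqrt{\varepsilon/\nu_\alpha}\). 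The constant term gives exactly the rank-one operator \(\langle\cdot,e\rangle e/(2\sqrt{\nu_\alpha\varepsilon})\). The second term equals \(\frac{e^{-\kappa|s|}-1}{2\nu_\alpha\kappa}\), which converges pointwise to \(-|s|/(2\nu_\alpha)\) and is bounded by \(|s|/(2\nu_\alpha)\) uniformly in \(\kappa\). The weighted kernel \(\ell(x)\ell(y)(xy)^{-1/2}|\log x-\log y|\) is Hilbert--Schmidt, because in logarithmic variables it is \((1+|u|)^{-2}(1+|v|)^{-2}|u-v|\), and this function is square-integrable on \(\mathbb R^2\). Dominated convergence therefore gives Hilbert--Schmidt, hence operator-norm, convergence to the operator with kernel \(-\ell(x)\ell(y)(xy)^{-1/2}|\log(x/y)|/(2\nu_\alpha)\).

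The remainder \(r_\lambda\) is where I expect the main obstacle. Near \(\xi=0\) we have \(r_\lambda(\xi)=\frac{m_\rho(\xi)-\Lambda_\alpha+\nu_\alpha\xi^2}{(\lambda-m_\rho)(\varepsilon+\nu_\alpha\xi^2)}\), whose numerator is \(O(\xi^4)\) and whose denominator is \(\gtrsim\xi^4\) uniformly in \(\varepsilon\ge0\). So \(r_\lambda\) is bounded uniformly and converges pointwise, off \(\xi=0\), to
\[
  r_0(\xi)=\frac{1}{\Lambda_\alpha-m_\rho(\xi)}-\frac{1}{\nu_\alpha\xi^2}.
\]
This limit is bounded near \(0\). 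Away from \(0\), strict monotonicity of \(m_\rho\) from \eqref{eq:model-monotonicity} gives \(\Lambda_\alpha-m_\rho\ge c>0\), so \(r_\lambda\) is uniformly bounded on all of \(\mathbb R\). Boundedness alone only gives strong convergence of \(\ell\,\mathcal M^*M_{r_\lambda}\mathcal M\,\ell\), not norm convergence.

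To upgrade to norm convergence I would use the derivatives of \(r_\lambda\). Both \(r_\lambda\) and \(\partial_\xi r_\lambda\) are uniformly bounded and converge, because each \(\xi\)-derivative costs one power of \(\xi\) in both numerator and denominator. One \(\xi\)-derivative corresponds to a factor \(|s|\) in the kernel, and the weight \(\ell(x)\ell(y)\sim(1+|u|)^{-2}(1+|v|)^{-2}\) absorbs two powers of \(\log\). So I would write \(M_{r_\lambda}\) as \(\langle s\rangle^{-1}\cdot(\text{uniformly bounded, strongly convergent})\cdot\langle s\rangle^{-1}\) type factorizations. Alternatively, I would show directly that \(\ell\,\mathcal M^*M_{r_\lambda-r_0}\mathcal M\,\ell\) is Hilbert--Schmidt with norm tending to \(0\): the kernel difference in the variable \(s\) is the Fourier transform of \(r_\lambda-r_0\), bounded by \(\min(\|r_\lambda-r_0\|_{L^1},\,|s|^{-1}\|\partial(r_\lambda-r_0)\|_{L^1})\) after cutting off large \(|\xi|\), where \(m_\rho\) decays exponentially and \(r_\lambda\to\lambda^{-1}-(\nu_\alpha\xi^2)^{-1}\) uniformly. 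Integrability of \(\partial r_\lambda\) near \(0\) needs care, since terms like \(\varepsilon\xi/(\varepsilon+\nu_\alpha\xi^2)^2\) have \(L^1\) norm that stays bounded but does not vanish. So I would fall back on the weighted Hilbert--Schmidt bound using \(\ell\in L^2(\dd x/x)\) together with uniform boundedness of the kernel and dominated convergence. Finally, \(G_\alpha\) is the sum of the three limits: the \(|s|\)-kernel, \(\ell\,\mathcal M^*M_{r_0}\mathcal M\,\ell\), and \(\Lambda_\alpha^{-1}L^2\) on the second summand. Each is self-adjoint because it is the limit of self-adjoint operators minus a self-adjoint rank-one term.
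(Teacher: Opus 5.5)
Your handling of the second summand, the extraction of the rank-one term from $e^{-\kappa|s|}/(2\sqrt{\nu_\alpha\varepsilon})$, and the Hilbert--Schmidt convergence of the $|\log(x/y)|$-kernel agree with the paper's proof. The gap is in the remainder. You flag it as the main obstacle but do not close it, and the routes you sketch fail as stated.

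You never removed the value of the symbol at infinity. Your $r_\lambda$ tends to $\lambda^{-1}$ and your $r_0$ tends to $\Lambda_\alpha^{-1}$ as $|\xi|\to\infty$, so $r_\lambda-r_0\to\lambda^{-1}-\Lambda_\alpha^{-1}\neq0$. This has three consequences:
\begin{itemize}
\item $r_\lambda-r_0\notin L^1$.
\item The $s$-kernel of the difference contains a Dirac mass at $s=0$, so ``uniform boundedness of the kernel'' fails.
\item $\ell\,\mathcal M^*M_{r_\lambda-r_0}\mathcal M\,\ell$ contains the non-compact summand $(\lambda^{-1}-\Lambda_\alpha^{-1})L^2$. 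It is therefore not Hilbert--Schmidt for any $\lambda>\Lambda_\alpha$, and no Hilbert--Schmidt/dominated-convergence argument applied to the whole difference can work.
\end{itemize}
Your hint about ``cutting off large $|\xi|$'' points the right way, but you never say how the high-frequency piece is controlled.

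The derivative route rests on a false premise. Near $\xi=0$ one has $r_\lambda(\xi)=\phi(\xi/\sqrt\varepsilon)(1+O(\xi^2))$ with $\phi(t)=ct^4(1+\nu_\alpha t^2)^{-2}$, where $c>0$ is the $\xi^4$-coefficient of $m_\rho$. Hence $\partial_\xi r_\lambda$ has size $\varepsilon^{-1/2}$ at $|\xi|\sim\sqrt\varepsilon$ and is not uniformly bounded. This is the same scaling behind your remark that its $L^1$ norm does not vanish.

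The missing step is to split off the constant, as the paper does:
\[
(\lambda-m_\rho)^{-1}=(\varepsilon+\nu_\alpha\xi^2)^{-1}+\lambda^{-1}+r_\lambda .
\]
The constant contributes $\lambda^{-1}L^2$, which converges in norm to $\Lambda_\alpha^{-1}L^2$. Uniformly for $0<\lambda-\Lambda_\alpha\leq1$, the new remainder is bounded on $|\xi|\leq1$ and bounded by $C(m_\rho(\xi)+\xi^{-2})$ on $|\xi|\geq1$. Dominated convergence then gives $\|r_\lambda-r_0\|_{L^2(\mathbb R)}\to0$. In logarithmic variables the kernel of $L\mathcal M^*M_h\mathcal ML$ is $(1+|u|)^{-2}(1+|v|)^{-2}\check h(u-v)$, so its Hilbert--Schmidt norm is $\lesssim\|h\|_{L^2}$, and norm convergence follows.

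An equivalent fix within your own decomposition is to cut at $|\xi|=1$. On $|\xi|\geq1$ one has $\Lambda_\alpha-m_\rho\geq c>0$, so $|r_\lambda-r_0|=O(\varepsilon)$ uniformly there, which gives operator-norm convergence directly. On $|\xi|\leq1$ the difference is bounded and converges pointwise, hence tends to zero in $L^2$. With either fix, your $G_\alpha$ coincides with the paper's, since your $r_0$ already contains the constant $\Lambda_\alpha^{-1}$.
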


\begin{proof}
The Mellin transform and its inverse are
\[ (\mathcal Mf)(\xi) = \frac1{\sqrt{2\pi}} \int_0^\infty x^{-1/2+i\xi}f(x)\dd x, \]
and
\[ (\mathcal M^*g)(x) = \frac1{\sqrt{2\pi}} \int_{\mathbb R}x^{-1/2-i\xi}g(\xi)\dd\xi. \]
By \eqref{eq:Mellin-model},
\[ \mathcal M\bigl(C_\rho\bigr)\mathcal M^* = M_{m_\rho}, \qquad m_\rho(\xi) = \frac{|\Gamma(\alpha/2+i\xi)|^2}{\Gamma(\alpha)}. \]
Since
\[ \log m_\rho(\xi) = \log\Gamma\left(\frac\alpha2+i\xi\right) + \log\Gamma\left(\frac\alpha2-i\xi\right) - \log\Gamma(\alpha), \]
we have
\[ (\log m_\rho)'(0)=0, \qquad (\log m_\rho)''(0) = -2\psi_1\left(\frac\alpha2\right). \]
As \(m_\rho(0)=\Lambda_\alpha\), Taylor's formula gives
\begin{equation}\label{eq:model-threshold-Taylor} m_\rho(\xi) = \Lambda_\alpha-\nu_\alpha\xi^2+O(\xi^4), \qquad \nu_\alpha = \Lambda_\alpha\psi_1\left(\frac\alpha2\right), \qquad \xi\to0.\end{equation}

Fix \(\lambda>\Lambda_\alpha\).  To pass from multiplier identities to integral kernels, we use the following formula.  For \(h\in L^1(\mathbb R)\cap L^2(\mathbb R)\), define
\[ \check h(t) = \frac1{2\pi} \int_{\mathbb R}e^{-it\xi}h(\xi)\dd\xi. \]
If \(f\in C_c^\infty(\mathbb R_+)\), the Mellin inversion formula and Fubini's
theorem give
\[
\begin{split}
  (\mathcal M^*M_h\mathcal Mf)(x)
  &=
  \int_0^\infty
  \frac1{2\pi\sqrt{xy}}
  \left(
    \int_{\mathbb R}
    \left(\frac{x}{y}\right)^{-i\xi}h(\xi)\dd\xi
  \right)f(y)\dd y\\
  &=
  \frac1{\sqrt x}
  \int_0^\infty
  \frac1{\sqrt y}
  \check h\left(\log\frac{x}{y}\right)
  f(y)\dd y.
\end{split}
\]
Consequently,
\begin{equation}\label{eq:weighted-Mellin-kernel} (L\mathcal M^*M_h\mathcal MLf)(x) = \frac{\ell(x)}{\sqrt x} \int_0^\infty \frac{\ell(y)}{\sqrt y} \check h\left(\log\frac{x}{y}\right) f(y)\dd y.\end{equation}

By \eqref{eq:Mellin-model},
\[ (\lambda-C_\rho)^{-1}=\mathcal M^*M_{(\lambda-m_\rho)^{-1}}\mathcal M. \]
Moreover, \eqref{eq:model-threshold-Taylor} gives
\[ \lambda-m_\rho(\xi)=\lambda-\Lambda_\alpha+\nu_\alpha\xi^2+O(\xi^4), \qquad \xi\to0. \]
Define \(r_\lambda\) by
\begin{equation}\label{eq:exact-resolvent-decomposition} \frac1{\lambda-m_\rho(\xi)} = \frac1{\lambda-\Lambda_\alpha+\nu_\alpha\xi^2} + \frac1\lambda + r_\lambda(\xi).\end{equation}
For \(\xi\ne0\), set
\[ r_0(\xi) = \frac1{\Lambda_\alpha-m_\rho(\xi)} - \frac1{\nu_\alpha\xi^2} - \frac1{\Lambda_\alpha}. \]
Since \(m_\rho(\xi)\to0\) as \(|\xi|\to\infty\), \eqref{eq:exact-resolvent-decomposition} gives
\[ r_\lambda(\xi)\longrightarrow0, \qquad |\xi|\to\infty. \]
The value of \(r_0\) at \(\xi=0\) is irrelevant.
We claim that
\begin{equation}\label{eq:rlambda-L2-limit} \|r_\lambda-r_0\|_{L^2(\mathbb R)} \longrightarrow0, \qquad \lambda\downarrow\Lambda_\alpha.\end{equation}
For \(|\xi|\leq1\), expansion
\eqref{eq:model-threshold-Taylor} gives
\[ \left| \frac1{\lambda-m_\rho(\xi)} - \frac1{\lambda-\Lambda_\alpha+\nu_\alpha\xi^2} \right| \leq C\frac{\xi^4}{(\lambda-\Lambda_\alpha+\xi^2)^2} \leq C. \]
For \(|\xi|\geq1\),
\[ r_\lambda(\xi) = \frac{m_\rho(\xi)} {\lambda(\lambda-m_\rho(\xi))} - \frac1{\lambda-\Lambda_\alpha+\nu_\alpha\xi^2}, \]
so, uniformly for \(0<\lambda-\Lambda_\alpha\leq1\),
\[ |r_\lambda(\xi)|+|r_0(\xi)| \leq C\bigl(m_\rho(\xi)+\xi^{-2}\bigr). \]
Stirling's formula yields
\[ m_\rho(\xi) = O\bigl(|\xi|^{\alpha-1}e^{-\pi|\xi|}\bigr), \qquad |\xi|\to\infty. \]
Pointwise convergence and the dominated convergence theorem prove
\eqref{eq:rlambda-L2-limit}. Hence
\[ \left\| L\mathcal M^*M_{r_\lambda}\mathcal ML - L\mathcal M^*M_{r_0}\mathcal ML \right\|_{\mathbf S_2} \longrightarrow0. \]

We now apply \eqref{eq:weighted-Mellin-kernel} to \(h(\xi)=(\lambda-\Lambda_\alpha+\nu_\alpha\xi^2)^{-1}\).  Its inverse Fourier transform is
\[ \check h(t) = \frac1{2\sqrt{\nu_\alpha(\lambda-\Lambda_\alpha)}} \exp\left( -\sqrt{\frac{\lambda-\Lambda_\alpha}{\nu_\alpha}}\,|t| \right). \]
Let
\[ e_1(x)=x^{-1/2}\ell(x). \]
Then
\[
\begin{split}
  &\bigl(
  L\mathcal M^*M_{(\lambda-\Lambda_\alpha+\nu_\alpha\xi^2)^{-1}}
  \mathcal MLf
  \bigr)(x)\\
  &\quad=
  \frac{e_1(x)}{2\sqrt{\nu_\alpha(\lambda-\Lambda_\alpha)}}
  \int_0^\infty
  e_1(y)
  \exp\left(
    -\sqrt{\frac{\lambda-\Lambda_\alpha}{\nu_\alpha}}
    \left|\log\frac{x}{y}\right|
  \right)
  f(y)\dd y.
\end{split}
\]
Define
\[
\begin{split}
  (R_\lambda f)(x)
  &=
  \frac{e_1(x)}{2\nu_\alpha}
  \int_0^\infty
  e_1(y)
  \frac{
    \exp\left(
      -\sqrt{(\lambda-\Lambda_\alpha)/\nu_\alpha}
      |\log(x/y)|
    \right)-1
  }{\sqrt{(\lambda-\Lambda_\alpha)/\nu_\alpha}}
  f(y)\dd y,\\
  (R_0f)(x)
  &=
  -\frac{e_1(x)}{2\nu_\alpha}
  \int_0^\infty
  e_1(y)
  \left|\log\frac{x}{y}\right|
  f(y)\dd y.
\end{split}
\]
It follows that
\begin{equation}\label{eq:quadratic-model-resolvent} L\mathcal M^* M_{(\lambda-\Lambda_\alpha+\nu_\alpha\xi^2)^{-1}} \mathcal ML = \frac{\langle\,\cdot\,,e_1\rangle e_1} {2\sqrt{\nu_\alpha(\lambda-\Lambda_\alpha)}} + R_\lambda.\end{equation}

For every \(a,t>0\),
\[ 0 \leq t-\frac{1-e^{-at}}a \leq t, \qquad \lim_{a\downarrow0} \left( t-\frac{1-e^{-at}}a \right)=0. \]
With
\[ a=\sqrt{\frac{\lambda-\Lambda_\alpha}{\nu_\alpha}}, \qquad t=\left|\log\frac{x}{y}\right|, \]
this gives pointwise convergence of the kernel of
\(R_\lambda-R_0\) to zero and the bound
\[ \left| \frac{ \exp\left( -\sqrt{(\lambda-\Lambda_\alpha)/\nu_\alpha}|\log(x/y)| \right)-1 }{\sqrt{(\lambda-\Lambda_\alpha)/\nu_\alpha}} + \left|\log\frac{x}{y}\right| \right| \leq \left|\log\frac{x}{y}\right|. \]
Consequently,
\[
\begin{split}
  \|R_\lambda-R_0\|_{\mathbf S_2}^2
  &\leq
  \frac1{4\nu_\alpha^2}
  \int_0^\infty\int_0^\infty
  e_1(x)^2e_1(y)^2
  \left|\log\frac{x}{y}\right|^2
  \dd x\dd y.
\end{split}
\]
After the substitutions
\[ u=\log x, \qquad v=\log y, \qquad \frac{\dd x}{x}=\dd u, \qquad \frac{\dd y}{y}=\dd v, \]
the integral on the right becomes
\[ \int_{\mathbb R}\int_{\mathbb R} \frac{|u-v|^2} {(1+|u|)^4(1+|v|)^4} \dd u\dd v. \]
Since
\[ |u-v|^2\leq2u^2+2v^2, \]
this double integral is finite.  The dominated convergence theorem
therefore yields
\begin{equation}\label{eq:Rlambda-HS-limit} \lim_{\lambda\downarrow\Lambda_\alpha} \|R_\lambda-R_0\|_{\mathbf S_2}=0.\end{equation}

Combining \eqref{eq:quadratic-model-resolvent} and
\eqref{eq:exact-resolvent-decomposition}, we obtain
\[
\begin{split}
  L(\lambda-C_\rho)^{-1}L
  &=
  \frac{\langle\,\cdot\,,e_1\rangle e_1}
       {2\sqrt{\nu_\alpha(\lambda-\Lambda_\alpha)}}
  +R_\lambda
  +L\mathcal M^*M_{r_\lambda}\mathcal ML
  +\lambda^{-1}L^2.
\end{split}
\]
Define
\[ G_\alpha = \left( R_0+L\mathcal M^*M_{r_0}\mathcal ML+\Lambda_\alpha^{-1}L^2 \right) \oplus \Lambda_\alpha^{-1}L^2. \]
Since \(H_0=C_\rho\oplus0\), the difference in
\eqref{eq:model-resolvent-expansion} equals
\[
\begin{split}
  &R_\lambda-R_0
  +
  L\mathcal M^*M_{r_\lambda-r_0}\mathcal ML
  +
  (\lambda^{-1}-\Lambda_\alpha^{-1})L^2\\
  &\qquad\oplus
  (\lambda^{-1}-\Lambda_\alpha^{-1})L^2.
\end{split}
\]
Its norm tends to zero by \eqref{eq:Rlambda-HS-limit},
\eqref{eq:rlambda-L2-limit}, and
\(\lambda^{-1}\to\Lambda_\alpha^{-1}\).  This proves
\eqref{eq:model-resolvent-expansion}.
\end{proof}

With the endpoint expansion available, we first prove that only finitely many eigenvalues can lie above \(\Lambda_\alpha\).

\begin{proof}[Proof of the finiteness assertion in part~(3) of
Theorem~\ref{thm:complete-spectrum-Ta}]
\leavevmode\par
The kernel estimates obtained in the proof of
Theorem~\ref{thm:weighted-Hankel-no-sc}, together with
\[ t^\alpha\bigl(\zeta(1+t)^\alpha-1\bigr) = 1+O(t), \qquad t\downarrow0, \]
and the exponential decay of
\(\zeta(1+t)^\alpha-1\) as \(t\to\infty\), imply
\[ \sum_{j,k=1}^2 \int_0^\infty\int_0^\infty (1+|\log x|)^4(1+|\log y|)^4 |K_{jk}(x,y)|^2 \dd x\dd y <\infty. \]
Define \(W\) on \(\mathcal H\) by the kernel
\[ W_{jk}(x,y) = (1+|\log x|)^2K_{jk}(x,y)(1+|\log y|)^2. \]
Then \(W=W^*\) is Hilbert--Schmidt and
\begin{equation}\label{eq:K-factorization} K=LWL.\end{equation}
In view of \eqref{eq:K-factorization}, put
\[ W_+=\frac{|W|+W}{2}, \qquad T_+=H_0+LW_+L. \]
Since \(W\leq W_+\), we have \(T\leq T_+\).  For
\(\lambda>\Lambda_\alpha\), define
\[ B(\lambda) = W_+^{1/2}L(\lambda-H_0)^{-1}LW_+^{1/2}. \]
This operator is non-negative and compact.

For \(u\in\mathcal H\), put
\[ v=(\lambda-H_0)^{1/2}u, \qquad Y=W_+^{1/2}L(\lambda-H_0)^{-1/2}. \]
Then
\[ \langle(T_+-\lambda)u,u\rangle = \langle(Y^*Y-I)v,v\rangle, \]
\[ Y^*Y = (\lambda-H_0)^{-1/2}LW_+L(\lambda-H_0)^{-1/2}, \qquad YY^*=B(\lambda). \]
Since \(\lambda-H_0\) is boundedly invertible, the displayed quadratic-form identity and Glazman's lemma \cite[Theorem~1.25]{FrankLaptevWeidl2023} give
\[ N((\lambda,\infty);T_+)=N((1,\infty);Y^*Y). \]
The non-zero eigenvalues of \(Y^*Y\) and \(YY^*\) agree, including their multiplicities.  Since \(YY^*=B(\lambda)\), we obtain
\begin{equation}\label{eq:Birman-Schwinger-count} N((\lambda,\infty);T_+) = N((1,\infty);B(\lambda)).\end{equation}

Lemma~\ref{lem:model-resolvent-threshold} gives
\[ \lim_{\lambda\downarrow\Lambda_\alpha} \left\| B(\lambda) - \frac{ \langle\,\cdot\,,W_+^{1/2}e\rangle W_+^{1/2}e }{2\sqrt{\nu_\alpha(\lambda-\Lambda_\alpha)}} - K_\alpha^{(0)} \right\| =0, \]
where
\[ K_\alpha^{(0)}=W_+^{1/2}G_\alpha W_+^{1/2}. \]
The operator \(K_\alpha^{(0)}\) is compact and self-adjoint.  Hence there is
\(\lambda_0>\Lambda_\alpha\) such that, whenever
\(\Lambda_\alpha<\lambda<\lambda_0\),
\[ B(\lambda) \leq \frac{ \langle\,\cdot\,,W_+^{1/2}e\rangle W_+^{1/2}e }{2\sqrt{\nu_\alpha(\lambda-\Lambda_\alpha)}} + K_\alpha^{(0)}+\frac14I. \]
By the rank-one eigenvalue-count estimate \cite[Lemma~3.2]{Widom1966},
\[
\begin{split}
  N((1,\infty);B(\lambda))
  &\leq
  N\left(
    (\frac34,\infty);
    \frac{
      \langle\,\cdot\,,W_+^{1/2}e\rangle
      W_+^{1/2}e
    }{2\sqrt{\nu_\alpha(\lambda-\Lambda_\alpha)}}
    +
    K_\alpha^{(0)}
  \right)\\
  &\leq
  1+N((\frac34,\infty);K_\alpha^{(0)}),
  \qquad
  \Lambda_\alpha<\lambda<\lambda_0.
\end{split}
\]
Together with \eqref{eq:Birman-Schwinger-count}, this yields
\[ N((\lambda,\infty);T_+) \leq 1+N((\frac34,\infty);K_\alpha^{(0)}), \qquad \Lambda_\alpha<\lambda<\lambda_0. \]
The right-hand side is finite because \(K_\alpha^{(0)}\) is compact.  Moreover,
\[ N((\Lambda_\alpha,\infty);T_+) = \sup_{\Lambda_\alpha<\lambda<\lambda_0} N((\lambda,\infty);T_+), \]
and hence
\[ N((\Lambda_\alpha,\infty);T_+) \leq 1+N((\frac34,\infty);K_\alpha^{(0)}) <\infty. \]
Since \(T\leq T_+\),
\[ N((\Lambda_\alpha,\infty);T)<\infty. \]
Finally, \(T\) is unitarily equivalent to two copies of
\(K_\alpha\), so
\[ N((\Lambda_\alpha,\infty);T) = 2N((\Lambda_\alpha,\infty);K_\alpha). \]
Therefore
\[ N((\Lambda_\alpha,\infty);K_\alpha)<\infty. \]
Theorem~\ref{thm:Ta-unitary-Ka} gives
\[
  N((\Lambda_\alpha,\infty);T_\alpha)
  =N((\Lambda_\alpha,\infty);K_\alpha)<\infty.
\]
\end{proof}

\subsubsection{Simplicity above the essential spectrum}

Let \((X,\mu)\) be a measure space.  An operator \(S\) on
\(L^2(X,\mu)\) is called \emph{positivity improving} if
\[
  0\leq F\in L^2(X,\mu),\quad F\neq0
  \quad\Longrightarrow\quad
  SF>0\ \text{a.e.}
\]

\begin{lemma} \label{lem:simplicity-at-spectral-top}
Let \((X,\mu)\) be a measure space, and let \(S\) be a bounded
non-negative self-adjoint operator on \(L^2(X,\mu)\).  Assume that
\(S\) is positivity improving and that \(\eta=\|S\|\) is an
eigenvalue of \(S\).  Then
\[
  \dim\ker(S-\eta I)=1.
\]
\end{lemma}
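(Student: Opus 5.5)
The argument is the standard Perron--Frobenius one for positivity improving operators. We show that every eigenfunction for the top eigenvalue \(\eta=\|S\|\) can be replaced by its modulus. The modulus is then strictly positive, and two strictly positive functions cannot be orthogonal. Since \(S\) is positivity improving, it maps real functions to real functions: write \(F=F_+-F_-\) and apply \(S\) to each part. Because \(S\) is also self-adjoint, the eigenspace \(\ker(S-\eta I)\) is invariant under complex conjugation. Hence it is spanned by real-valued eigenfunctions, and it suffices to prove that its real subspace is one-dimensional.

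\emph{Step 1: \(|F|\) is again an eigenfunction.} Let \(F\) be real with \(SF=\eta F\) and \(\|F\|=1\). Positivity preservation gives \(|SF|\leq S|F|\) pointwise. Therefore
\[
\eta=\langle SF,F\rangle\leq\langle S|F|,|F|\rangle\leq\|S\|\,\||F|\|^2=\eta .
\]
Equality holds throughout. Since \(S\geq0\) and \(\eta=\|S\|\), the operator \(\eta I-S\) is non-negative, and it satisfies \(\langle(\eta I-S)|F|,|F|\rangle=0\). By Cauchy--Schwarz for the non-negative form \((\eta I-S)\), or equivalently by \(\|(\eta-S)^{1/2}|F|\|=0\), we get \(S|F|=\eta|F|\). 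In the same way \(F_\pm=(|F|\pm F)/2\) are eigenfunctions (or zero).

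\emph{Step 2: strict positivity and the sign of \(F\).} If \(|F|\neq0\), then \(|F|=\eta^{-1}S|F|>0\) almost everywhere. Here \(\eta>0\): if \(\eta=0\), then \(S=0\), which is not positivity improving unless \(L^2=\{0\}\), a trivial case. Now suppose both \(F_+\) and \(F_-\) are non-zero. Each is an eigenfunction, so each is strictly positive a.e. by the same argument. This contradicts \(F_+F_-=0\). Hence every real eigenfunction has a fixed sign, and may be taken strictly positive a.e.

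\emph{Step 3: conclusion.} If the real eigenspace had dimension at least two, it would contain two orthogonal non-zero real eigenfunctions \(F,G\). By Step 2 each can be taken strictly positive a.e., so \(\langle F,G\rangle=\int FG\,\dd\mu>0\), contradicting orthogonality. Thus \(\dim\ker(S-\eta I)=1\).

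The only delicate point is the equality step in Step 1. It uses \(\eta=\|S\|\) being the top of the spectrum together with \(S\geq0\) to pass from equality of quadratic forms to \(S|F|=\eta|F|\). Everything else is routine.
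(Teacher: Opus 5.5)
Your proof is correct and is essentially the paper's Perron--Frobenius argument. You obtain $S|F|=\eta|F|$ from $|SF|\le S|F|$ and $\eta=\|S\|$, get $|F|>0$ a.e.\ from positivity improvement, and conclude by orthogonality; the paper does the same, except that it derives $S|f|=\eta|f|$ from $\eta|f|\le S|f|$ together with $\|S|f|\|\le\eta\|f\|$ rather than from the quadratic form of $\eta I-S$, and it finishes by taking a real eigenvector $h\perp|f|$, which must change sign, so that $|Sh|<S|h|$ a.e.\ contradicts $S|h|=\eta|h|=|Sh|$ (whereas you show directly that $F_\pm$ are eigenfunctions, so every real eigenfunction has constant sign).
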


\begin{proof}
If \(g\in L^2(X,\mu)\) is real-valued, then
\(g=g_+-g_-\), and the definition of positivity improving shows that
\(Sg_+\) and \(Sg_-\) are real-valued.  Thus \(Sg=Sg_+-Sg_-\) is
real-valued.  Hence, starting from any non-zero element of
\(\ker(S-\eta I)\) and taking its real or imaginary part, we may choose
a non-zero real-valued eigenfunction \(f\in\ker(S-\eta I)\).  The
lattice inequality gives
\[
  |Sf|\leq S|f|,
\]
and hence
\[
  \eta|f|\leq S|f|.
\]
On the other hand,
\[
  \|S|f|\|\leq\eta\||f|\|=\eta\|f\|.
\]
It follows that equality holds almost everywhere:
\[
  S|f|=\eta|f|.
\]
Since \(|f|\geq0\) and \(|f|\neq0\), positivity improving yields
\[
  \eta|f|=S|f|>0
  \quad\text{a.e.},
\]
and therefore \(|f|>0\) almost everywhere.

Suppose that the eigenspace has dimension at least two.  Choose a
non-zero eigenvector \(h\perp |f|\).  Since \(S\) preserves the real
subspace, one of \(\Re h\) and \(\Im h\) is a non-zero real
eigenvector still orthogonal to \(|f|\); replacing \(h\) by that
vector, we may assume that \(h\) is real.  Since \(|f|>0\) almost
everywhere and \(\langle h,|f|\rangle=0\), both \(h_+\) and \(h_-\)
are non-zero.  Hence
\[
  Sh_+>0,\qquad Sh_->0
  \quad\text{a.e.}
\]
Consequently,
\[
  |Sh|
  =|Sh_+-Sh_-|
  <Sh_++Sh_-
  =S|h|
  \quad\text{a.e.},
\]
whereas the preceding argument applied to \(h\) gives
\(S|h|=\eta|h|=|Sh|\).  This contradiction proves the assertion.
\end{proof}

For an integer \(r\geq1\), let \(\mathfrak S_r\) be the symmetric
group on \(\{1,\ldots,r\}\).  Define the antisymmetrization operator
\(\mathfrak A_r\) on \(\mathcal H^{\otimes r}\) by
\[
  \mathfrak A_r
  (f_1\otimes\cdots\otimes f_r)
  =
  \frac1{r!}
  \sum_{\sigma\in\mathfrak S_r}
  \operatorname{sgn}(\sigma)\,
  f_{\sigma(1)}\otimes\cdots\otimes f_{\sigma(r)}.
\]
Then \(\mathfrak A_r\) is an orthogonal projection, and the \(r\)-th
exterior power of \(\mathcal H\) is defined by
\[
  \bigwedge^r\mathcal H
  =
  \operatorname{Ran}\mathfrak A_r
  =
  \{F\in\mathcal H^{\otimes r}:
    \mathfrak A_rF=F\},
\]
we use the convention
\[
  \bigwedge^0\mathcal H=\mathbb C.
\]
For \(f_1,\ldots,f_r\in\mathcal H\), we use the normalized exterior
product
\[
\begin{split}
  f_1\wedge\cdots\wedge f_r
  &:=
  \sqrt{r!}\,
  \mathfrak A_r(f_1\otimes\cdots\otimes f_r)\\
  &=
  \frac1{\sqrt{r!}}
  \sum_{\sigma\in\mathfrak S_r}
  \operatorname{sgn}(\sigma)\,
  f_{\sigma(1)}\otimes\cdots\otimes f_{\sigma(r)}.
\end{split}
\]
With this normalization,
\[
  \left\langle
    f_1\wedge\cdots\wedge f_r,\,
    g_1\wedge\cdots\wedge g_r
  \right\rangle
  =
  \det\bigl(\langle f_i,g_j\rangle\bigr)_{i,j=1}^r.
\]

If \(A\) is a bounded operator on \(\mathcal H\), then
\(A^{\otimes r}\) leaves \(\bigwedge^r\mathcal H\) invariant.  Its
restriction
\[
  A^{\wedge r}
  :=
  A^{\otimes r}\big|_{\bigwedge^r\mathcal H}
\]
is called the \(r\)-th exterior power of \(A\), and
\[
  A^{\wedge r}(f_1\wedge\cdots\wedge f_r)
  =
  Af_1\wedge\cdots\wedge Af_r.
\]
We shall use the standard identities
\[
  (AB)^{\wedge r}=A^{\wedge r}B^{\wedge r},
  \qquad
  (A^*)^{\wedge r}=(A^{\wedge r})^*.
\]

\begin{theorem}
\label{thm:exterior-power-simplicity}
Let \(\mathcal H\) be a complex Hilbert space and let \(A\) be a
bounded non-negative self-adjoint operator on \(\mathcal H\).  Put
\[
  \beta=\sup\sigma_{\mathrm{ess}}(A).
\]
Assume that for every integer \(r\geq1\) there exist a measure space
\((X_r,\mu_r)\) and a unitary operator
\[
  U_r:\bigwedge^r\mathcal H\longrightarrow L^2(X_r,\mu_r)
\]
such that
\[
  S_r=U_rA^{\wedge r}U_r^{-1}
\]
is positivity improving.  Then every eigenvalue \(\lambda>\beta\) of
\(A\) is simple.
\end{theorem}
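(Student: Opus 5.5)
The plan is to argue by contradiction, using the top-of-spectrum simplicity result Lemma~\ref{lem:simplicity-at-spectral-top} applied to a suitable exterior power. Suppose some eigenvalue \(\lambda>\beta\) of \(A\) is not simple. Since \(\lambda>\beta\), the spectrum of \(A\) in \((\beta,\infty)\) consists of isolated eigenvalues of finite multiplicity accumulating at most at \(\beta\). Hence there are only finitely many eigenvalues in \([\lambda,\infty)\). List them with multiplicity as
\[
  \lambda_1\geq\lambda_2\geq\cdots\geq\lambda_n,
\]
all \(>\beta\), with orthonormal eigenvectors \(\phi_1,\dots,\phi_n\). Let \(k\) be the first index with \(\lambda_k=\lambda\). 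Then \(\lambda_k=\lambda_{k+1}=\lambda\) by non-simplicity, and \(\lambda_{k-1}>\lambda\) if \(k\geq2\).

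I take \(r=k\) and study \(A^{\wedge k}\). The key spectral fact is that
\[
  \|A^{\wedge k}\|=\lambda_1\cdots\lambda_k,
\]
and that this value is an eigenvalue of \(A^{\wedge k}\) of multiplicity at least two. The eigenvectors are \(\phi_1\wedge\cdots\wedge\phi_{k-1}\wedge\phi_k\) and \(\phi_1\wedge\cdots\wedge\phi_{k-1}\wedge\phi_{k+1}\). They are orthonormal by the determinant formula for inner products, and both have eigenvalue \(\lambda_1\cdots\lambda_{k-1}\lambda\).

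To justify the norm identity I would use the spectral decomposition \(A=A_d+A_c\). Here \(A_d=\sum_{j\leq n}\lambda_j\langle\cdot,\phi_j\rangle\phi_j\), and \(A_c\) is the restriction to \(\{\phi_1,\dots,\phi_n\}^\perp\), with \(\|A_c\|\leq\mu:=\max(\beta,\text{next eigenvalue below }\lambda)<\lambda\). Indeed \(\sup\sigma(A_c)<\lambda\), since the spectrum above \(\beta\) is discrete and we have exhausted \([\lambda,\infty)\).

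Then \(\bigwedge^k\mathcal H\) decomposes orthogonally into pieces \(\bigwedge^j V\otimes\bigwedge^{k-j}V^\perp\), where \(V=\operatorname{span}\{\phi_i\}\). Each piece is invariant under \(A^{\wedge k}\), with norm at most
\[
  \bigl(\text{product of the top }j\text{ of the }\lambda_i\bigr)\cdot\mu^{k-j}.
\]
Since \(A\geq0\), the norm of an exterior power on an invariant piece is bounded by the product of the top eigenvalue-type quantities. I would verify this via \(\|B^{\wedge m}\|\leq\|B\|^m\) on the \(V^\perp\) factor and diagonalization on \(V\). As \(\mu<\lambda\leq\lambda_i\), the maximum \(\lambda_1\cdots\lambda_k\) is attained only for \(j=k\). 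Thus \(\eta=\lambda_1\cdots\lambda_k\) is the norm and an eigenvalue, of multiplicity \(\geq2\).

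Finally, transport by \(U_k\). The operator \(S_k=U_kA^{\wedge k}U_k^{-1}\) is bounded, self-adjoint, and non-negative, since \(A^{\wedge k}\) is the restriction of \(A^{\otimes k}\geq0\) to an invariant subspace. It is positivity improving by hypothesis and has \(\|S_k\|=\eta\) as an eigenvalue. Lemma~\ref{lem:simplicity-at-spectral-top} gives \(\dim\ker(S_k-\eta)=1\), contradicting multiplicity \(\geq2\).

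The main obstacle I expect is the careful proof that \(\|A^{\wedge k}\|=\lambda_1\cdots\lambda_k\), that is, controlling the mixed pieces involving the non-discrete part of \(A\). I would handle it by the invariant-subspace splitting above, together with the bound \(\|B^{\wedge m}\|\leq\|B\|^m\) applied to \(B=A_c\). The degenerate case \(\eta=0\) cannot occur because \(\lambda>\beta\geq0\).
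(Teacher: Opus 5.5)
Your proposal is correct and is essentially the paper's proof. You take $r$ to be one more than the number of eigenvalues strictly above $\lambda$, compute $\|A^{\wedge r}\|$ as the product of the top $r$ eigenvalues via the orthogonal splitting of $\bigwedge^r\mathcal H$ into mixed pieces, and exhibit two independent top eigenvectors $\phi_1\wedge\cdots\wedge\phi_{k-1}\wedge\phi_k$ and $\phi_1\wedge\cdots\wedge\phi_{k-1}\wedge\phi_{k+1}$, contradicting Lemma~\ref{lem:simplicity-at-spectral-top}; the only difference is cosmetic (you put the whole $\lambda$-eigenspace into $V$, so $\|A|_{V^\perp}\|<\lambda$, whereas the paper keeps only eigenvalues $>\lambda$ in $M$ and uses $\|A|_{\mathcal N}\|=\lambda$).
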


\begin{proof}
Fix an eigenvalue \(\lambda>\beta\).  Since \(\lambda\) lies strictly
above the essential spectrum, there are only finitely many eigenvalues
of \(A\) in \((\lambda,\infty)\), counted with multiplicity.  Put
\[
  q=N((\lambda,\infty);A),
\]
and list them as
\[
  \mu_1\geq\mu_2\geq\cdots\geq\mu_q>\lambda.
\]
 Choose an orthonormal family \(e_1,\ldots,e_q\) such that
\[
  Ae_j=\mu_je_j,\qquad j=1,\ldots,q,
\]
and set
\[
  M=\operatorname{span}\{e_1,\ldots,e_q\},
  \qquad
  \mathcal N=M^\perp.
\]
Both \(M\) and \(\mathcal N\) reduce \(A\).  Since \(\mathcal N\)
contains no spectral point strictly larger than \(\lambda\), while
\(\lambda\) itself is an eigenvalue of \(A|_{\mathcal N}\), and
\(A\geq0\), we have
\[
  \|A|_{\mathcal N}\|=\lambda.
\]

Set \(r=q+1\).  The orthogonal decomposition
\(\mathcal H=M\oplus\mathcal N\) induces
\[
  \bigwedge^r\mathcal H
  =
  \bigoplus_{j=0}^q
  \left(\bigwedge^jM\right)
  \wedge
  \left(\bigwedge^{r-j}\mathcal N\right).
\]
On the summand containing \(j\) factors from \(M\), the norm of
\(A^{\wedge r}\) is bounded by
\[
  \mu_1\cdots\mu_j\lambda^{q+1-j}.
\]
For \(j<q\), this is strictly smaller than
\[
  R=\mu_1\cdots\mu_q\lambda.
\]
On the remaining summand the bound is attained.  If
\(0\neq f\in\ker(A-\lambda I)\), then
\[
  e_1\wedge\cdots\wedge e_q\wedge f\neq0
\]
and
\[
  A^{\wedge r}
  (e_1\wedge\cdots\wedge e_q\wedge f)
  =
  R(e_1\wedge\cdots\wedge e_q\wedge f).
\]
Hence
\[
  \|A^{\wedge r}\|=R,
\]
and \(R\) is an eigenvalue of \(A^{\wedge r}\).

We also record that \(A^{\wedge r}\geq0\).  Since \(A\geq0\), its
non-negative square root \(A^{1/2}\) is self-adjoint, and exterior
powers respect products and adjoints.  Therefore
\[
  A^{\wedge r}
  =
  \bigl((A^{1/2})^{\wedge r}\bigr)^*
  (A^{1/2})^{\wedge r}
  \geq0.
\]
Thus \(S_r\) is a bounded non-negative self-adjoint operator.  By
assumption it is positivity improving, and by unitary equivalence
\(\|S_r\|=R\) is an eigenvalue.  Lemma~\ref{lem:simplicity-at-spectral-top}
therefore implies that \(R\) is a simple eigenvalue of \(A^{\wedge r}\).

Assume now that \(\lambda\) is not simple.  Choose linearly independent
\(f,g\in\ker(A-\lambda I)\).  Since \(A\) is self-adjoint, \(f\) and
\(g\) are orthogonal to \(M\).  Hence
\[
  e_1\wedge\cdots\wedge e_q\wedge f,
  \qquad
  e_1\wedge\cdots\wedge e_q\wedge g
\]
are linearly independent eigenvectors of \(A^{\wedge r}\)
corresponding to \(R\), a contradiction.  Therefore
\[
  \dim\ker(A-\lambda I)=1.
\]
\end{proof}

\begin{proof}[Proof of the simplicity assertion in part~(3) of Theorem~\ref{thm:complete-spectrum-Ta}]
It remains to verify the
hypotheses of Theorem~\ref{thm:exterior-power-simplicity} for
\(K_\alpha\).  Write
\[
  \phi_\alpha(t)=\zeta(1+t)^\alpha-1
  =\sum_{n\geq2}\frac{d_\alpha(n)}{n}e^{-t\log n},
  \qquad t>0.
\]
All coefficients \(d_\alpha(n)\) are strictly positive.  Fix \(r\geq1\)
and
\[
  0<x_1<\cdots<x_r,\qquad 0<y_1<\cdots<y_r.
\]
The Cauchy--Binet formula
\cite[Section~0.8.7]{HornJohnson2013}, first for finite partial sums and
then by passage to the limit, gives
\[
\begin{split}
  \det\bigl(\phi_\alpha(x_i+y_j)\bigr)_{i,j=1}^r
  &=
  \sum_{2\leq n_1<\cdots<n_r}
  \left(\prod_{k=1}^r\frac{d_\alpha(n_k)}{n_k}\right)
  \det\bigl(e^{-x_i\log n_k}\bigr)_{i,k=1}^r
  \det\bigl(e^{-y_j\log n_k}\bigr)_{j,k=1}^r.
\end{split}
\]
The exponential kernel \(e^{xy}\) is strictly totally positive of all
orders \cite[p.~2]{JohnsonRichards2025}.  Thus, for increasing positive
\(z_i\) and increasing \(\tau_k\), taking \(x_i=-z_i\) and
reversing the order of the \(\tau_k\)'s gives 
\[
  (-1)^{r(r-1)/2}
  \det\bigl(e^{-z_i\tau_k}\bigr)_{i,k=1}^r>0.
\]
The two determinants in every summand therefore have the same
non-zero sign, so every summand is positive.  It follows that
\[
  \det\bigl(\phi_\alpha(x_i+y_j)\bigr)_{i,j=1}^r>0.
\]
Since
\[
  k_\alpha(x,y)=(xy)^\rho\phi_\alpha(x+y),
\]
multiplication of rows and columns by positive factors yields
\begin{equation}\label{eq:strict-total-positivity-kalpha}
  \det\bigl(k_\alpha(x_i,y_j)\bigr)_{i,j=1}^r>0.
\end{equation}

For every \(r\geq1\), let
\[
  X_r=\{(x_1,\ldots,x_r)\in\mathbb R_+^r:
  0<x_1<\cdots<x_r\},
\]
and use the standard unitary identification
\[
  U_r:\bigwedge^rL^2(\mathbb R_+)\longrightarrow L^2(X_r),
  \qquad
  (U_rF)(x)=\sqrt{r!}\,F(x).
\]
Set
\[
  S_r=U_rK_\alpha^{\wedge r}U_r^{-1},
\]
then we have
\[
  (S_rF)(x)=\int_{X_r}K_r(x,y)F(y)\dd y,
  \qquad
  K_r(x,y)=
  \det\bigl(k_\alpha(x_i,y_j)\bigr)_{i,j=1}^r.
\]
 By
\eqref{eq:strict-total-positivity-kalpha},
\[
  K_r(x,y)>0,\qquad x,y\in X_r.
\]
Hence \(S_r\) is positivity improving on  \(L^2(X_r)\).  
The operator \(K_\alpha\) is bounded, non-negative and
self-adjoint, and
\[
  \sup\sigma_{\mathrm{ess}}(K_\alpha)=\Lambda_\alpha.
\]
All hypotheses of Theorem~\ref{thm:exterior-power-simplicity} are
therefore satisfied with \(A=K_\alpha\) and
\(\beta=\Lambda_\alpha\).  Consequently, every eigenvalue of
\(K_\alpha\) in \((\Lambda_\alpha,\infty)\) is simple.

Finally, Theorem~\ref{thm:Ta-unitary-Ka} transfers simplicity, including
multiplicity, from \(K_\alpha\) to \(T_\alpha\).  Thus every
eigenvalue of \(T_\alpha\) above \(\Lambda_\alpha\) is simple.
\end{proof}

\subsection{Existence of eigenvalues above the essential spectrum}
\begin{lemma}
\label{lem:E-nonempty}
We have
\[
\|K_2\|>\Lambda_2=1.
\]
In particular,
\[
E:=\{\alpha>1:\|K_\alpha\|>\Lambda_\alpha\}
\]
is non-empty.
\end{lemma}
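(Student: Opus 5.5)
The plan is to exhibit an explicit one-parameter family of test functions whose Rayleigh quotient for \(K_2\) can be computed in closed form through the Dirichlet series of \(\zeta^2\), and then to show that this quotient exceeds \(\Lambda_2=1\) to first order in the parameter. Once \(\|K_2\|>1=\Lambda_2\) is known, \(2\in E\), so \(E\) is non-empty.

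\emph{Step 1: the quadratic form.} Recall that \(K_2=N_2N_2^*\) and that the kernel of \(K_2\) is \((xy)^{1/2}\sum_{n\ge2}\frac{d(n)}{n}n^{-x-y}\). For real \(F\geq0\), Fubini then gives
\[
  \langle K_2F,F\rangle=\sum_{n\geq2}\frac{d(n)}{n}\Bigl(\int_0^\infty x^{1/2}n^{-x}F(x)\dd x\Bigr)^2 .
\]
I take \(F_{\beta,c}(x)=x^{\beta-1/2}e^{-cx}\) with \(\beta>0\) and \(c>0\). Then
\[
  \int_0^\infty x^{1/2}n^{-x}F_{\beta,c}(x)\dd x=\frac{\Gamma(\beta+1)}{(c+\log n)^{\beta+1}},
  \qquad
  \|F_{\beta,c}\|^2=\frac{\Gamma(2\beta)}{(2c)^{2\beta}}.
\]
Using \((c+\log n)^{-s}=\Gamma(s)^{-1}\int_0^\infty t^{s-1}e^{-ct}n^{-t}\dd t\) with \(s=2\beta+2\), the numerator becomes
\[
  \frac{\Gamma(\beta+1)^2}{\Gamma(2\beta+2)}\int_0^\infty t^{2\beta+1}e^{-ct}\bigl(\zeta(1+t)^2-1\bigr)\dd t .
\]

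\emph{Step 2: split off the singular part.} I will use the Laurent expansion \(\zeta(1+t)^2=t^{-2}+2\gamma t^{-1}+g(t)\), where \(g\) is bounded near \(0\) and \(g(t)\to1\) as \(t\to\infty\). The \(t^{-2}\) term contributes \(\Gamma(2\beta)c^{-2\beta}\), and the \(2\gamma t^{-1}\) term contributes \(2\gamma\Gamma(2\beta+1)c^{-2\beta-1}\). The remainder contributes \(\int_0^\infty t^{2\beta+1}e^{-ct}(g(t)-1)\dd t=O_c(1)\) uniformly as \(\beta\downarrow0\). Dividing by \(\|F_{\beta,c}\|^2\), the Rayleigh quotient equals
\[
  Q(\beta,c)=\frac{4^{\beta}\Gamma(\beta+1)^2}{\Gamma(2\beta+2)}
  \Bigl[1+\frac{4\gamma\beta}{c}+c^{2\beta}\frac{O_c(1)}{\Gamma(2\beta)}\Bigr].
\]
Since \(1/\Gamma(2\beta)=2\beta+O(\beta^2)\), the last term is \(2\beta I(c)+O_c(\beta^2)\), where \(I(c)=\int_0^\infty te^{-ct}(g(t)-1)\dd t\). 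The prefactor is \(1+\beta(2\log2-2)+O(\beta^2)\), because \(2\psi(1)-2\psi(2)=-2\). Hence
\[
  Q(\beta,c)=1+\beta\Bigl(\frac{4\gamma}{c}+2I(c)+2\log2-2\Bigr)+O_c(\beta^2).
\]

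\emph{Step 3: choice of parameters.} The main obstacle is to make the coefficient of \(\beta\) strictly positive, which requires controlling \(I(c)\) as \(c\downarrow0\). The function \(t(g(t)-1)\) is bounded near \(0\). At infinity, \(g(t)-1=-t^{-2}-2\gamma t^{-1}+O(2^{-t})\), so \(t(g(t)-1)\sim-2\gamma\). This gives \(|I(c)|\lesssim 1+1/c\), and more carefully \(I(c)=-2\gamma/c+O(\log(1/c))\). That would exactly cancel the \(4\gamma/c\) term, so this is the delicate point and the constants must be tracked.

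To get around the cancellation, I would first try to compute the coefficient exactly. The subtraction of \(1\) in \(\zeta^2-1\) contributes the term \(-\Gamma(2\beta+2)c^{-2\beta-2}\), which is not small as \(c\to0\). So it is better to keep it explicitly rather than absorb it into \(I(c)\). Alternatively, one can fix a moderate \(c\) (for instance \(c=1\)) and bound \(I(c)\) numerically from the explicit series \(\zeta(1+t)^2-1=\sum_{n\ge2}d(n)n^{-1-t}\), with a rigorous tail estimate, to verify that \(4\gamma/c+2I(c)+2\log2-2>0\).

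If this first-order coefficient turns out to be non-positive for every \(c\), the fallback is to replace the weight \(e^{-cx}\) by a compactly supported cutoff near \(x=0\). There \(a(t)=t^2(\zeta(1+t)^2-1)=1+2\gamma t+O(t^2)>1\). I would then compare directly with \(C_{1/2}\), whose norm is \(1\), and use the threshold expansion of Lemma~\ref{lem:model-resolvent-threshold} to balance the \(O(\xi^2)\) loss of the model against the gain \(2\gamma t\).

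In either case, choosing \(\beta\) small enough then gives \(Q(\beta,c)>1\). This yields \(\|K_2\|>\Lambda_2=1\), and therefore \(2\in E\).
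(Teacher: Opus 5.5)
Your Steps 1 and 2 are correct and coincide with the paper's argument. The paper uses exactly your family with $c=2$, namely $f_\varepsilon(x)=x^{-1/2+\varepsilon}e^{-2x}$, and obtains the same first-order expansion. Your coefficient can be rewritten as
\[
\frac{4\gamma}{c}+2I(c)+2\log2-2=2\bigl(\Phi(c)+\log2-1\bigr),\qquad
\Phi(c)=\int_0^\infty e^{-ct}\Bigl[t\bigl(\zeta(1+t)^2-1\bigr)-\frac1t\Bigr]\dd t .
\]
So the lemma is precisely the inequality $\Phi(c)>1-\log2$ for some $c$, and this is the step you never carry out. As it stands, the proposal does not show $Q(\beta,c)>1$ for any choice of parameters.

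Step 3 also heads the wrong way, and the fallback fails:
\begin{itemize}
\item As $c\downarrow0$, the $1/c$ terms cancel and $\Phi(c)=-\log(1/c)+O(1)$, so the coefficient tends to $-\infty$. The useful regime is $c$ of order one.
\item By dilation invariance of $C_{1/2}$, the first-order loss caused by a cutoff does not depend on the scale where it is placed; for the exponential cutoff it is $2(\log2-1)$.
\item The gain from $a(t)-1\approx2\gamma t$ on $(0,\delta)$ is only $O(\delta)$, so a cutoff near $x=0$ loses. The obstruction is this first-order edge loss, not the $O(\xi^2)$ behaviour of $m_\rho$ at $0$, so Lemma~\ref{lem:model-resolvent-threshold} does not help.
\end{itemize}

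The missing ingredient is a global lower bound on $\zeta(1+t)$. The paper uses strict convexity of $x\mapsto x^{-1-t}$, via the trapezoid rule on each $[n,n+1]$, to get $\frac1t<\zeta(1+t)-\frac12$ for every $t>0$. This gives
\[
t\bigl(\zeta(1+t)^2-1\bigr)-\frac1t>1-\frac34t,
\]
hence $\Phi(2)>\int_0^\infty(1-\frac34t)e^{-2t}\dd t=\frac5{16}$. The coefficient is then at least $2(\log2-\frac{11}{16})>0$.

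The margin is thin and depends on $c$. At your suggested $c=1$, the same bound only gives $\Phi(1)>\frac14$, which is below $1-\log2$. That choice would therefore need a genuinely sharper, rigorously verified estimate, which your proposal only announces.
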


\begin{proof}
For $\varepsilon>0$, set
\[
f_\varepsilon(x)
=
x^{-1/2+\varepsilon}e^{-2x}.
\]
Then
\[
\|f_\varepsilon\|_2^2
=
\Gamma(2\varepsilon)4^{-2\varepsilon}.
\]
Since
\[
k_2(x,y)
=
\sqrt{xy}\,
\bigl(\zeta(1+x+y)^2-1\bigr),
\]
the change of variables
\[
t=x+y,
\qquad
v=\frac{x}{x+y}
\]
gives
\[
\langle K_2f_\varepsilon,f_\varepsilon\rangle
=
\frac{\Gamma(1+\varepsilon)^2}
{\Gamma(2+2\varepsilon)}
\int_0^\infty
t^{1+2\varepsilon}
\bigl(\zeta(1+t)^2-1\bigr)e^{-2t}\dd t.
\]
Put
\[
J_\varepsilon
=
\int_0^\infty
t^{2\varepsilon}
\left[
t\bigl(\zeta(1+t)^2-1\bigr)-\frac1t
\right]e^{-2t}\dd t.
\]
Then
\[
\int_0^\infty
t^{1+2\varepsilon}
\bigl(\zeta(1+t)^2-1\bigr)e^{-2t}\dd t
=
\Gamma(2\varepsilon)2^{-2\varepsilon}
+
J_\varepsilon,
\]
and hence
\[
\frac{\langle K_2f_\varepsilon,f_\varepsilon\rangle}
{\|f_\varepsilon\|_2^2}
=
\frac{\Gamma(1+\varepsilon)^2}
{\Gamma(2+2\varepsilon)}
\left[
2^{2\varepsilon}
+
\frac{4^{2\varepsilon}}{\Gamma(2\varepsilon)}
J_\varepsilon
\right].
\]

Moreover,
\[
t\bigl(\zeta(1+t)^2-1\bigr)-\frac1t
=
O(1),
\qquad t\downarrow0,
\]
and
\[
t\bigl(\zeta(1+t)^2-1\bigr)-\frac1t
=
-\frac1t+O(t2^{-t}),
\qquad t\to\infty.
\]
Thus, by the dominated convergence theorem,
\[
J_\varepsilon
=
J_0+o(1),
\qquad
\varepsilon\downarrow0.
\]

For $t>0$, the function $x\mapsto x^{-1-t}$ is strictly convex.
Therefore
\[
\int_n^{n+1}x^{-1-t}\dd x
<
\frac12
\left(
n^{-1-t}+(n+1)^{-1-t}
\right),
\qquad n\ge1.
\]
Summing over $n$ yields
\[
\frac1t
<
\zeta(1+t)-\frac12.
\]
Consequently,
\[
t\bigl(\zeta(1+t)^2-1\bigr)-\frac1t
>
1-\frac34t,
\]
and hence
\[
J_0
>
\int_0^\infty
\left(1-\frac34t\right)e^{-2t}\dd t
=
\frac5{16}.
\]

Finally, as $\varepsilon\downarrow0$
\[
\frac{\Gamma(1+\varepsilon)^2}
{\Gamma(2+2\varepsilon)}
=
1-2\varepsilon+O(\varepsilon^2),
\]
\[
2^{2\varepsilon}
=
1+2\varepsilon\log2+O(\varepsilon^2),
\]
and
\[
\frac{4^{2\varepsilon}}{\Gamma(2\varepsilon)}
J_\varepsilon
=
2\varepsilon J_0+o(\varepsilon).
\]
Therefore
\[
\frac{\langle K_2f_\varepsilon,f_\varepsilon\rangle}
{\|f_\varepsilon\|_2^2}
=
1+
2\varepsilon
\bigl(\log2-1+J_0\bigr)
+o(\varepsilon).
\]
Since
\[
\log2-1+J_0
>
\log2-\frac{11}{16}
>0,
\]
we obtain, for all sufficiently small $\varepsilon>0$,
\[
\frac{\langle K_2f_\varepsilon,f_\varepsilon\rangle}
{\|f_\varepsilon\|_2^2}
>1=\Lambda_2.
\]
Thus
\[
\|K_2\|>\Lambda_2,
\]
and consequently $2\in E$.
\end{proof}

We now determine exactly for which parameters such eigenvalues exist.

\begin{proof}[Proof of part~(4) of
Theorem~\ref{thm:complete-spectrum-Ta}]
We first prove that
\[
\|K_\alpha\|=\Lambda_\alpha
\]
for all $\alpha>1$ sufficiently close to $1$. We use Schur's test with the
weight
\[
w(x)=x^{-1/2}.
\]
Since the kernel $k_\alpha$ of $K_\alpha$ is non-negative and symmetric, it
is enough to prove that
\[
S_\alpha(x):=
\frac{1}{w(x)}
\int_0^\infty k_\alpha(x,y)w(y)\dd y
\leq \Lambda_\alpha,
\qquad x>0.
\]

Set
\[
r_\alpha(t)
=
\zeta(1+t)^\alpha-1-t^{-\alpha},
\qquad
I_\alpha(x)
=
\int_0^\infty
y^{\alpha/2-1}r_\alpha(x+y)\dd y.
\]
Since
\[
x^{\alpha/2}
\int_0^\infty
y^{\alpha/2-1}(x+y)^{-\alpha}\dd y
=
B\left(\frac{\alpha}{2},\frac{\alpha}{2}\right)
=
\frac{\Gamma(\alpha/2)^2}{\Gamma(\alpha)},
\]
we obtain
\begin{equation}
S_\alpha(x)
=
\Lambda_\alpha
+
x^{\alpha/2}I_\alpha(x).
\label{eq:critical-Schur-function}
\end{equation}
We shall show that $I_\alpha(x)<0$ for every $x>0$ when $\alpha>1$ is
sufficiently close to $1$.

At $\alpha=1$,
\[
r_1(t)
=
\sum_{n=2}^\infty n^{-1-t}-t^{-1}<0,
\qquad t>0,
\]
because $u\mapsto u^{-1-t}$ is strictly decreasing and
\[
\sum_{n=2}^\infty n^{-1-t}
<
\int_1^\infty u^{-1-t}\dd u
=
\frac1t.
\]
Fix $A\in(1,2)$. Uniformly for $1\leq\alpha\leq A$, the Laurent expansion
of $\zeta$ at $1$ gives
\[
|r_\alpha(t)|
\lesssim t^{1-\alpha},
\qquad 0<t\leq1,
\]
whereas, for $t\geq1$,
\[
|r_\alpha(t)|
\lesssim t^{-\alpha}.
\]
Consequently,
\[
J_\alpha
:=
\int_0^\infty
y^{\alpha/2-1}r_\alpha(y)\dd y
\]
is well defined for $1\leq\alpha\leq A$, and the dominated convergence theorem shows that
$J_\alpha\to J_1$ as $\alpha\to1$. Since $J_1<0$, there exist
$\delta_0>0$ and $\kappa>0$, with $1+\delta_0<A$, such that
\[
J_\alpha\leq-\kappa,
\qquad
1\leq\alpha\leq1+\delta_0.
\]

We next prove that
\begin{equation}
I_\alpha(x)\longrightarrow J_\alpha,
\qquad x\downarrow0,
\label{eq:Ialpha-uniform-zero}
\end{equation}
uniformly for $1\leq\alpha\leq1+\delta_0$. Indeed, after the change of variables
$t=x+y$,
\[
I_\alpha(x)
=
\int_x^\infty
(t-x)^{\alpha/2-1}r_\alpha(t)\dd t.
\]
For $0<x\leq1/2$, we therefore have
\[
\begin{aligned}
|I_\alpha(x)-J_\alpha|
\leq{}&
\int_0^{2x}t^{\alpha/2-1}|r_\alpha(t)|\dd t\\
&+
\int_x^{2x}(t-x)^{\alpha/2-1}|r_\alpha(t)|\dd t\\
&+
\int_{2x}^\infty
\left|
(t-x)^{\alpha/2-1}-t^{\alpha/2-1}
\right|
|r_\alpha(t)|\dd t.
\end{aligned}
\]
Using $|r_\alpha(t)|\lesssim t^{1-\alpha}$ for $0<t\leq1$, the first term is bounded by
\[
C\int_0^{2x}t^{-\alpha/2}\dd t
\lesssim x^{1-\alpha/2}.
\]
Since $t\asymp x$ on $[x,2x]$, the second term satisfies
\[
\begin{aligned}
\int_x^{2x}(t-x)^{\alpha/2-1}|r_\alpha(t)|\dd t
&\lesssim x^{1-\alpha}
\int_x^{2x}(t-x)^{\alpha/2-1}\dd t\\
&\lesssim x^{1-\alpha/2}.
\end{aligned}
\]
For $t\geq2x$, the mean value theorem gives, uniformly for
$1\leq\alpha\leq A$,
\[
\left|
(t-x)^{\alpha/2-1}-t^{\alpha/2-1}
\right|
\lesssim x\,t^{\alpha/2-2}.
\]
Hence
\[
x\int_{2x}^1
t^{\alpha/2-2}|r_\alpha(t)|\dd t
\lesssim x\int_{2x}^1t^{-\alpha/2-1}\dd t
\lesssim x^{1-\alpha/2},
\]
whereas, using $|r_\alpha(t)|\leq Ct^{-\alpha}$ for $t\geq1$,
\[
x\int_1^\infty
t^{\alpha/2-2}|r_\alpha(t)|\dd t
\lesssim x\int_1^\infty t^{-\alpha/2-2}\dd t
\lesssim x.
\]
Consequently,
\[
|I_\alpha(x)-J_\alpha|
\lesssim \bigl(x^{1-\alpha/2}+x\bigr)
\lesssim x^{1-A/2},
\qquad
1\leq\alpha\leq A,
\]
This proves \eqref{eq:Ialpha-uniform-zero}.
It follows from \eqref{eq:Ialpha-uniform-zero} and $J_\alpha\leq-\kappa$ that there
exists $0<x_0\leq\frac{1}{2}$ such that
\[
I_\alpha(x)<0,
\qquad
0<x\leq x_0,\quad
1\leq\alpha\leq1+\delta_0.
\]

For $x\geq1$, uniformly for
$1\leq\alpha\leq1+\delta_0$,
\[
\zeta(1+x+y)^\alpha-1
\lesssim 2^{-x-y}.
\]
Hence
\[
S_\alpha(x)
\lesssim x^{\alpha/2}2^{-x}
\int_0^\infty y^{\alpha/2-1}2^{-y}\dd y.
\]
The integral equals
\[
\frac{\Gamma(\alpha/2)}{(\log2)^{\alpha/2}}
\]
and is uniformly bounded for
$1\leq\alpha\leq1+\delta_0$. Therefore
\[
S_\alpha(x)
\lesssim x^{(1+\delta_0)/2}2^{-x}
\longrightarrow0,
\qquad x\to\infty,
\]
uniformly for $1\leq\alpha\leq1+\delta_0$. Since
$\Lambda_\alpha$ is continuous and strictly positive,
\[
\inf_{1\leq\alpha\leq1+\delta_0}\Lambda_\alpha>0.
\]
We may therefore choose once and for all $X>x_0$ such that
\[
S_\alpha(x)<\Lambda_\alpha,
\qquad
x\geq X,\quad
1\leq\alpha\leq1+\delta_0.
\]

It remains to control the fixed compact interval $[x_0,X]$. Since
$r_1(t)<0$ for every $t>0$,
\[
I_1(x)<0,
\qquad x>0,
\]
and therefore
\[
\max_{x_0\leq x\leq X}I_1(x)<0.
\]
Moreover, $(\alpha,x)\mapsto I_\alpha(x)$ is continuous on
$[1,1+\delta_0]\times[x_0,X]$. 
After decreasing $\delta_0$ if necessary, while keeping the already chosen
$X$ fixed, we obtain
\[
I_\alpha(x)<0,
\qquad
x_0\leq x\leq X,\quad
1\leq\alpha\leq1+\delta_0.
\]
The preceding large-$x$ estimate remains valid after this reduction of
$\delta_0$.

Combining the three regions, \eqref{eq:critical-Schur-function} gives
\[
S_\alpha(x)\leq\Lambda_\alpha,
\qquad
x>0,\quad
1<\alpha\leq1+\delta_0.
\]
Schur's test therefore yields
\[
\|K_\alpha\|\leq\Lambda_\alpha.
\]
Since $\Lambda_\alpha\in\sigma_{\mathrm{ess}}(K_\alpha)$, we also have
$\|K_\alpha\|\geq\Lambda_\alpha$. Hence
\begin{equation}
\|K_\alpha\|=\Lambda_\alpha,
\qquad
1<\alpha\leq1+\delta_0.
\label{eq:subcritical-norm-equality}
\end{equation}


We next prove the monotonicity needed to determine the complete set of
parameters. Let
\[
U:L^2(\mathbb R_+,dx)\longrightarrow L^2(\mathbb R_+,\dd x/x),
\qquad
(Uf)(x)=x^{1/2}f(x),
\]
and put
\[
\widetilde K_\gamma=UK_\gamma U^{-1}.
\]

Let
\[
0\leq q\in L^2(\mathbb R_+,\dd x/x).
\]
For $\gamma>1$, define
\[
Q_\gamma(t)
=
t^\gamma\bigl(\zeta(1+t)^\gamma-1\bigr),
\]
and
\[
\mathcal A_\gamma(q;t)
=
\frac1{B(\gamma/2,\gamma/2)}
\int_0^1
q(tz)q(t(1-z))
z^{\gamma/2-1}(1-z)^{\gamma/2-1}\dd z.
\]
The change of variables
\[
t=x+y,\qquad z=\frac{x}{x+y}
\]
gives
\begin{equation}
\frac{\langle\widetilde K_\gamma q,q\rangle}
{\Lambda_\gamma}
=
\int_0^\infty
Q_\gamma(t)\mathcal A_\gamma(q;t)\frac{\dd t}{t}.
\label{eq:beta-quadratic-form}
\end{equation}

For $1<\alpha\le2, \quad\beta>\alpha$, define
\[
(C_{\alpha,\beta}q)(x)
=
\frac1{B(\alpha/2,(\beta-\alpha)/2)}
\int_0^1
q(rx)
r^{\alpha/2-1}
(1-r)^{(\beta-\alpha)/2-1}\dd r .
\]
By the Cauchy--Schwarz inequality,
\[
|C_{\alpha,\beta}q(x)|^2
\leq
\frac1{B(\alpha/2,(\beta-\alpha)/2)}
\int_0^1
|q(rx)|^2
r^{\alpha/2-1}
(1-r)^{(\beta-\alpha)/2-1}\dd r .
\]
Hence
\begin{equation}
\|C_{\alpha,\beta}q\|_{L^2(\dd x/x)}
\leq
\|q\|_{L^2(\dd x/x)} .
\label{eq:beta-contraction}
\end{equation}


We also have
\begin{equation}
\mathcal A_\beta(C_{\alpha,\beta}q;t)
=
\frac1{B(\alpha,\beta-\alpha)}
\int_0^1
s^{\alpha-1}(1-s)^{\beta-\alpha-1}
\mathcal A_\alpha(q;st)\dd s .
\label{eq:beta-composition}
\end{equation}

To verify \eqref{eq:beta-composition}, consider
\[
\begin{aligned}
\int_{\Omega}
&q(tx_1)q(tx_2)
x_1^{\alpha/2-1}x_2^{\alpha/2-1}
y_1^{(\beta-\alpha)/2-1}
y_2^{(\beta-\alpha)/2-1}
\dd x_1\dd x_2\dd y_1,
\end{aligned}
\]
where
\[
\Omega=
\{(x_1,x_2,y_1):x_1,x_2,y_1>0,\
x_1+x_2+y_1<1\},
\qquad
y_2=1-x_1-x_2-y_1 .
\]
The change of variables
\[
x_1=zr_1,\qquad
y_1=z(1-r_1),
\qquad
x_2=(1-z)r_2,\qquad
y_2=(1-z)(1-r_2)
\]
has Jacobian \(z(1-z)\) and yields the left-hand side of
\eqref{eq:beta-composition}. The change of variables
\[
x_1=su,\qquad
x_2=s(1-u),
\qquad
y_1=(1-s)v,\qquad
y_2=(1-s)(1-v)
\]
has Jacobian \(s(1-s)\) and gives the right-hand side. The equality
of the normalization constants follows from
\[
\begin{aligned}
&B(\beta/2,\beta/2)
 B(\alpha/2,(\beta-\alpha)/2)^2\\
&=
 B(\alpha/2,\alpha/2)
 B((\beta-\alpha)/2,(\beta-\alpha)/2)
 B(\alpha,\beta-\alpha).
\end{aligned}
\]

Combining
\eqref{eq:beta-quadratic-form} and
\eqref{eq:beta-composition}, we obtain
\[
\frac{
\langle
\widetilde K_\beta C_{\alpha,\beta}q,
C_{\alpha,\beta}q
\rangle}
{\Lambda_\beta}
=
\int_0^\infty
\mathcal A_\alpha(q;t)
R_{\alpha,\beta}(t)\frac{\dd t}{t},
\]
where
\[
R_{\alpha,\beta}(t)
=
\frac1{B(\alpha,\beta-\alpha)}
\int_0^1
s^{\alpha-1}(1-s)^{\beta-\alpha-1}
Q_\beta(t/s)\dd s .
\]

It remains to prove
\begin{equation}
R_{\alpha,\beta}(t)\geq Q_\alpha(t).
\label{eq:Q-comparison}
\end{equation}

Using the substitution
\[
u=t\frac{1-s}{s},
\]
we obtain
\[
R_{\alpha,\beta}(t)
=
\frac{t^\alpha}{B(\alpha,\beta-\alpha)}
\int_0^\infty
u^{\beta-\alpha-1}
(\zeta(1+t+u)^\beta-1)\dd u .
\]
Therefore it is enough to prove
\begin{equation}
\frac{\Gamma(\beta)}
{\Gamma(\alpha)\Gamma(\beta-\alpha)}
\int_0^\infty
u^{\beta-\alpha-1}
(\zeta(1+t+u)^\beta-1)\dd u
\geq
\zeta(1+t)^\alpha-1 .
\label{eq:fractional-zeta-comparison}
\end{equation}


We first prove the estimate
\begin{equation}
-\frac{\zeta'(s)}{\zeta(s)}
\leq
\zeta(s)-1,
\qquad s>1 .
\label{eq:zeta-log-derivative-bound}
\end{equation}
Indeed,
\[
-\zeta'(s)
=
\sum_{n=2}^{\infty}\frac{\log n}{n^s},
\qquad
\zeta(s)(\zeta(s)-1)
=
\sum_{n=2}^{\infty}\frac{d(n)-1}{n^s}.
\]
It is therefore enough to prove
\begin{equation}
\sum_{n\leq N}(d(n)-1)\geq\log N!,
\qquad N\geq2.
\label{eq:divisor-sum-bound}
\end{equation}

For every integer $m\geq1$ and every integer $N\geq m^2$, counting the
lattice points $(a,b)\in\mathbb N^2$ with $ab\leq N$ and with at least
one of $a,b$ not exceeding $m$ gives
\[
\sum_{n\leq N}d(n)
\geq
2\sum_{k=1}^{m}\left\lfloor\frac Nk\right\rfloor-m^2.
\]
Since $N$ and $k$ are integers,
\[
\left\lfloor\frac Nk\right\rfloor
\geq
\frac{N+1}{k}-1.
\]
Hence, for every integer $N\in[m^2,(m+1)^2]$,
\[
\sum_{n\leq N}(d(n)-1)
\geq
2(N+1)H_m-2m-m^2-N.
\]
We use the standard estimates
\[
H_m>
\log m+\gamma+\frac{1}{2m+1},
\qquad
\gamma>0.577,
\]
and
\[
\log N!
<
\left(N+\frac12\right)\log N-N
+\frac12\log(2\pi)+\frac{1}{12N},
\qquad
\frac12\log(2\pi)<1.
\]
For fixed $m$, subtract the latter upper bound from the preceding lower
bound and regard the result as a function of the real variable
$N\in[m^2,(m+1)^2]$. Its second derivative is
\[
-\frac1N+\frac{1}{2N^2}-\frac{1}{6N^3}<0.
\]
It is therefore enough to check the two endpoints.

At $N=m^2$, the difference is bounded below by
\[
0.154m^2-m+\log m-0.35.
\]
This is positive at $m=5$, and its derivative
\[
0.308m-1+\frac1m
\]
is positive for $m\geq5$.

At $N=(m+1)^2$, using
\[
\log\left(1+\frac1m\right)\leq\frac1m
\]
and
\[
\frac{2(m^2+2m+2)}{2m+1}
=
m+\frac32+\frac5{4m+2},
\]
the difference is bounded below, for $m\geq5$, by
\[
0.154m^2-0.692m+\log m-1.8.
\]
This is positive at $m=5$, and its derivative
\[
0.308m-0.692+\frac1m
\]
is positive for $m\geq5$. Thus
\eqref{eq:divisor-sum-bound} holds for $N\geq25$.

For $2\leq N\leq24$, a direct calculation on the four blocks
\[
2\leq N\leq3,\qquad
4\leq N\leq8,\qquad
9\leq N\leq15,\qquad
16\leq N\leq24
\]
shows that the minimum of
\[
\sum_{n\leq N}(d(n)-1)-\log N!
\]
is attained respectively at $N=3,5,11,23$, with values
\[
2-\log6,\qquad
5-\log120,\qquad
18-\log(11!),\qquad
53-\log(23!),
\]
all of which are positive. This proves
\eqref{eq:divisor-sum-bound} for every $N\geq2$.

Now put
\[
A(x)
=
\sum_{n\leq x}\bigl(d(n)-1-\log n\bigr).
\]
Then \eqref{eq:divisor-sum-bound} implies $A(x)\geq0$ for $x\geq1$.
By Abel's summation formula,
\[
\sum_{n=2}^{\infty}
\frac{d(n)-1-\log n}{n^s}
=
s\int_1^\infty A(x)x^{-s-1}\dd x
\geq0.
\]
Consequently,
\[
-\zeta'(s)
\leq
\zeta(s)(\zeta(s)-1),
\]
and division by $\zeta(s)>0$ proves
\eqref{eq:zeta-log-derivative-bound}.

We now prove \eqref{eq:fractional-zeta-comparison}. Fix \(t>0\).
We use the strict concavity of \(1/\zeta\) on \((1,\infty)\), proved in
\cite[Theorem~1]{Alkan2019}. For \(u\geq0\),
\[
\begin{aligned}
\frac1{\zeta(1+t+u)}
&\leq
\frac1{\zeta(1+t)}
+
u\left(\frac1\zeta\right)'(1+t)\\
&=
\frac{1-u\zeta'(1+t)/\zeta(1+t)}
{\zeta(1+t)}.
\end{aligned}
\]
Hence
\[
\zeta(1+t+u)
\geq
\frac{\zeta(1+t)}
{1-u\zeta'(1+t)/\zeta(1+t)}.
\]
For
\[
0\leq u\leq
\frac{\zeta(1+t)-1}
{-\zeta'(1+t)/\zeta(1+t)},
\]
we have

\[
\zeta(1+t+u)^\beta-1
\geq
\frac{\zeta(1+t)^\beta}
{\left(1-u\zeta'(1+t)/\zeta(1+t)\right)^\beta}
-1.
\]
It follows that
\[
\begin{aligned}
&\frac{\Gamma(\beta)}
{\Gamma(\alpha)\Gamma(\beta-\alpha)}
\int_0^\infty
u^{\beta-\alpha-1}
\bigl(\zeta(1+t+u)^\beta-1\bigr)\dd u\\
&\quad\geq
\frac{\Gamma(\beta)}
{\Gamma(\alpha)\Gamma(\beta-\alpha)}
\int_0^{\frac{\zeta(1+t)-1}
{-\zeta'(1+t)/\zeta(1+t)}}
u^{\beta-\alpha-1}
\left[
\frac{\zeta(1+t)^\beta}
{\left(1-u\zeta'(1+t)/\zeta(1+t)\right)^\beta}
-1
\right]du .
\end{aligned}
\]
With
\[
v=-u\frac{\zeta'(1+t)}{\zeta(1+t)},
\]
the last expression becomes
\[
\begin{aligned}
&\frac{\Gamma(\beta)}
{\Gamma(\alpha)\Gamma(\beta-\alpha)}
\left(
-\frac{\zeta'(1+t)}{\zeta(1+t)}
\right)^{\alpha-\beta}
\int_0^{\zeta(1+t)-1}
v^{\beta-\alpha-1}
\left(
\frac{\zeta(1+t)^\beta}{(1+v)^\beta}-1
\right)\dd v .
\end{aligned}
\]
By \eqref{eq:zeta-log-derivative-bound},
\[
-\frac{\zeta'(1+t)}{\zeta(1+t)}
\leq
\zeta(1+t)-1.
\]
Since \(\alpha-\beta<0\) and
\[
\frac{\zeta(1+t)^\beta}{(1+v)^\beta}-1\geq0,
\qquad
0\leq v\leq\zeta(1+t)-1,
\]
we obtain the lower bound
\[
\begin{aligned}
&\frac{\Gamma(\beta)}
{\Gamma(\alpha)\Gamma(\beta-\alpha)}
\bigl(\zeta(1+t)-1\bigr)^{\alpha-\beta}
\int_0^{\zeta(1+t)-1}
v^{\beta-\alpha-1}
\left(
\frac{\zeta(1+t)^\beta}{(1+v)^\beta}-1
\right)\dd v .
\end{aligned}
\]

Define
\[
H(v)
=
(\beta-\alpha)
\int_0^1
u^{\beta-\alpha-1}
(1-u+uv)^{\alpha-1}\dd u,
\qquad 0\leq v\leq1.
\]
The substitutions
\[
z=\frac{v}{1+v},
\qquad
z=\left(1-\frac1{\zeta(1+t)}\right)u
\]
give
\[
\begin{aligned}
&\int_0^{\zeta(1+t)-1}
v^{\beta-\alpha-1}
\left(
\frac{\zeta(1+t)^\beta}{(1+v)^\beta}-1
\right)\dd v\\
&\quad=
\zeta(1+t)^\beta
\int_0^{1-\zeta(1+t)^{-1}}
z^{\beta-\alpha-1}(1-z)^{\alpha-1}\dd z
-
\frac{(\zeta(1+t)-1)^{\beta-\alpha}}
{\beta-\alpha}\\
&\quad=
\frac{(\zeta(1+t)-1)^{\beta-\alpha}}
{\beta-\alpha}
\left[
\zeta(1+t)^\alpha
H\bigl(\zeta(1+t)^{-1}\bigr)-1
\right].
\end{aligned}
\]
Moreover,
\[
H(0)
=
(\beta-\alpha)B(\beta-\alpha,\alpha).
\]
Thus the preceding lower bound is
\[
\frac{
\zeta(1+t)^\alpha H\bigl(\zeta(1+t)^{-1}\bigr)-1
}{H(0)}.
\]

For each \(u\in(0,1)\), the function
\[
v\longmapsto(1-u+uv)^{\alpha-1}
\]
is concave on \([0,1]\). Hence \(H\) is concave on \([0,1]\). Also,
\[
H(1)=1
\]
and, since \(\alpha>1\),
\[
0<H(0)
<
(\beta-\alpha)
\int_0^1u^{\beta-\alpha-1}\dd u
=
1.
\]
Therefore, for \(0\leq v\leq1\),
\[
\begin{aligned}
H(v)
&\geq
(1-v)H(0)+vH(1)\\
&=
H(0)+(1-H(0))v\\
&\geq
H(0)+(1-H(0))v^\alpha.
\end{aligned}
\]
Taking \(v=\zeta(1+t)^{-1}\) yields
\[
\frac{
\zeta(1+t)^\alpha H\bigl(\zeta(1+t)^{-1}\bigr)-1
}{H(0)}
\geq
\zeta(1+t)^\alpha-1.
\]
This proves \eqref{eq:fractional-zeta-comparison}, and therefore
\eqref{eq:Q-comparison}.

Combining \eqref{eq:beta-quadratic-form},
\eqref{eq:beta-composition}, and \eqref{eq:Q-comparison}, we obtain,
for every
\[
0\leq q\in L^2(\mathbb R_+,\dd x/x),
\]
\[
\frac{
\langle
\widetilde K_\beta C_{\alpha,\beta}q,
C_{\alpha,\beta}q
\rangle_{L^2(\dd x/x)}
}{\Lambda_\beta}
\geq
\frac{
\langle
\widetilde K_\alpha q,q
\rangle_{L^2(\dd x/x)}
}{\Lambda_\alpha}.
\]
Suppose that
\[
\|K_\alpha\|>\Lambda_\alpha.
\]
Since the kernel of $K_\alpha$ is non-negative, replacing a test
function by its absolute value if necessary, we can choose
\[
0\leq f\in L^2(\mathbb R_+,dx)
\]
such that
\[
\frac{
\langle K_\alpha f,f\rangle_{L^2(\dd x)}
}{\Lambda_\alpha}
>
\|f\|_{L^2(\dd x)}^2.
\]
Put
\[
q=Uf.
\]
Then
\[
0\leq q\in L^2(\mathbb R_+,\dd x/x),
\qquad
\|q\|_{L^2(\dd x/x)}
=
\|f\|_{L^2(\dd x)}.
\]
Thus
\[
\begin{aligned}
\frac{
\langle
\widetilde K_\beta C_{\alpha,\beta}q,
C_{\alpha,\beta}q
\rangle_{L^2(\dd x/x)}
}{\Lambda_\beta}
&\geq
\frac{
\langle
\widetilde K_\alpha q,q
\rangle_{L^2(\dd x/x)}
}{\Lambda_\alpha}
\\
&>
\|q\|_{L^2(\dd x/x)}^2
\\
&\geq
\|C_{\alpha,\beta}q\|_{L^2(\dd x/x)}^2,
\end{aligned}
\]
where the last inequality is \eqref{eq:beta-contraction}. Hence
\[
\|K_\beta\|>\Lambda_\beta.
\]
We have therefore proved
\begin{equation}
1<\alpha\leq2, \quad\beta>\alpha,
\qquad
\|K_\alpha\|>\Lambda_\alpha
\quad\Longrightarrow\quad
\|K_\beta\|>\Lambda_\beta.
\label{eq:parameter-monotonicity}
\end{equation}

Define
\[
E
=
\{\alpha>1:\|K_\alpha\|>\Lambda_\alpha\}.
\]
By \eqref{eq:subcritical-norm-equality}, $E$ is disjoint from a right
neighborhood of $1$, while Lemma~\ref{lem:E-nonempty} gives
\[
2\in E,
\]
and the set $E$ is upward closed by 
\eqref{eq:parameter-monotonicity} . 

We next show that $E$ is open. Fix $\alpha_0\in E$. Since the kernel of
$K_{\alpha_0}$ is non-negative, there exists
$0\leq f\in L^2(\mathbb R_+,dx)$ such that
\[
\langle K_{\alpha_0}f,f\rangle_{L^2(\dd x)}
>
\Lambda_{\alpha_0}\|f\|_{L^2(\dd x)}^2.
\]
Since $K_{\alpha_0}$ is bounded and
$C_c^\infty(0,\infty)$ is dense in $L^2(\mathbb R_+,dx)$, the strict
inequality is preserved under a sufficiently small $L^2$-perturbation.
Thus we may choose
\[
0\leq f\in C_c^\infty(0,\infty)
\]
with the same strict inequality. Choose $0<a<b<\infty$ such that
\[
\operatorname{supp}f\subset[a,b].
\]
On $[a,b]^2$, the kernel $k_\alpha(x,y)$ depends continuously on
$\alpha$, uniformly for $\alpha$ in a neighborhood of $\alpha_0$.
Therefore
\[
\alpha\longmapsto
\langle K_\alpha f,f\rangle_{L^2(\dd x)}
-
\Lambda_\alpha\|f\|_{L^2(\dd x)}^2
\]
is continuous and remains positive for $\alpha$ sufficiently close to
$\alpha_0$. Hence $E$ is open.

Set
\[
\alpha_*=\inf E.
\]
By \eqref{eq:subcritical-norm-equality} and $[2,\infty)\subset E$,
\[
1<\alpha_*<2.
\]
Since $E$ is upward closed and open,
\[
E=(\alpha_*,\infty).
\]
The theorem follows from Theorem~\ref{thm:Ta-unitary-Ka}.

\end{proof}
  
\section{Absence of eigenvalues in the essential spectrum}

\begin{lemma}\label{lem:embedded-remainder-regularity}

Let $\alpha>1$ and $\rho=(\alpha-1)/2$. Put
\[
r_\alpha(t)=\zeta(1+t)^\alpha-1-t^{-\alpha},
\]
let $R_\alpha$ be the integral operator with kernel $(xy)^\rho r_\alpha(x+y)$, and define
\[
P_\rho=\frac{\mathrm d}{\mathrm d x}-\frac{\rho}{x}.
\]
For every integer \(N\ge1\) and every \(0\le\delta<\alpha/2\), if
\(u\in L^2(\mathbb R_+)\) satisfies
\[
  x^{-N+1-\delta}u\in L^2(\mathbb R_+),
\]
then
\[
  x^{-\delta}P_\rho^N R_\alpha u\in L^2(\mathbb R_+).
\]
Moreover, $x^NP_\rho^N R_\alpha$ is bounded on $L^2(\mathbb R_+)$.

\end{lemma}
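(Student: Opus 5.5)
The plan rests on the intertwining identity \(P_\rho=x^\rho\,\frac{\mathrm d}{\mathrm dx}\,x^{-\rho}\), so that \(P_\rho^N(x^\rho g)=x^\rho g^{(N)}\). Since
\[
(R_\alpha u)(x)=x^\rho\int_0^\infty y^\rho r_\alpha(x+y)u(y)\,\mathrm dy,
\]
differentiating under the integral sign \(N\) times gives
\[
(P_\rho^N R_\alpha u)(x)=x^\rho\int_0^\infty y^\rho r_\alpha^{(N)}(x+y)u(y)\,\mathrm dy .
\]
The differentiation is justified locally uniformly in \(x\) by dominated convergence, using the derivative bounds below together with the integrability furnished by the Schur estimates. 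The whole lemma therefore reduces to pointwise bounds on \(r_\alpha^{(N)}\), followed by Schur's test for kernels that are homogeneous of degree \(-1\).

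\emph{Step 1: derivative bounds.} The function \(g_1(t)=t\zeta(1+t)\) is analytic and non-vanishing near \([0,1]\), with \(g_1(0)=1\). Hence \((t\zeta(1+t))^\alpha=1+t\,g(t)\) with \(g\) smooth on \([0,1]\). This gives
\[
r_\alpha(t)=t^{1-\alpha}g(t)-1,
\]
and the Leibniz rule yields \(|r_\alpha^{(N)}(t)|\lesssim t^{1-\alpha-N}\) for \(0<t\le1\) and \(N\ge1\). For \(t\ge1\), the function \(\zeta(1+t)^\alpha-1\) and all its derivatives are \(O(t^k2^{-t})\), while \(\partial_t^N t^{-\alpha}=O(t^{-\alpha-N})\). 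Combining the two ranges,
\[
|r_\alpha^{(N)}(t)|\lesssim t^{1-\alpha-N}(1+t)^{-1}=t^{-2\rho-N}(1+t)^{-1},\qquad t>0.
\]
I expect this to be the only genuinely delicate point. The constant \(-1\) is killed by differentiation, and the gain of one power of \(t\) near \(0\) is exactly what the Laurent expansion supplies. It is essential that this gain survives all \(N\) derivatives, which it does because \(g\) is smooth up to \(t=0\).

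\emph{Step 2: first assertion.} Write \(u=y^{N-1+\delta}v\) with \(v\in L^2(\mathbb R_+)\). Then \(x^{-\delta}P_\rho^NR_\alpha u\) is obtained by applying to \(v\) an operator whose kernel is dominated by
\[
x^{\rho-\delta}\,y^{\rho+N-1+\delta}\,(x+y)^{-2\rho-N}.
\]
Here the factor \((1+x+y)^{-1}\le1\) has been dropped. This majorant is homogeneous of degree \(-1\), so it is bounded on \(L^2\) by Schur's test with weight \(x^{-1/2}\), or equivalently by the Mellin argument behind \eqref{eq:Mellin-model}. The test requires
\[
\int_0^\infty t^{\rho+N-1+\delta}(1+t)^{-2\rho-N}t^{-1/2}\,\mathrm dt<\infty,
\]
together with the transposed integral, which reduces to the same one after \(t\mapsto1/t\). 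Near \(t=0\) the integral converges because \(\rho+N+\delta>1/2\), which holds since \(N\ge1\) and \(\rho>0\). At infinity the exponent is \(-\rho+\delta-3/2<-1\), which is equivalent to \(\delta<\rho+1/2=\alpha/2\). This is exactly the hypothesis on \(\delta\).

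\emph{Step 3: second assertion.} The kernel of \(x^NP_\rho^NR_\alpha\) is dominated by
\[
x^{N+\rho}\,y^\rho\,(x+y)^{-2\rho-N},
\]
which is again homogeneous of degree \(-1\). The corresponding Schur integral is \(\int_0^\infty t^{\rho-1/2}(1+t)^{-2\rho-N}\,\mathrm dt\). It converges at \(0\) because \(\rho>-1/2\), and at \(\infty\) because the exponent \(-\rho-N-1/2\) is less than \(-1\). Hence \(x^NP_\rho^NR_\alpha\) is bounded on \(L^2(\mathbb R_+)\).
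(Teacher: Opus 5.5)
Your strategy is the paper's: the intertwining identity $P_\rho^N(x^\rho F)=x^\rho F^{(N)}$, the derivative bounds $|r_\alpha^{(N)}(t)|\lesssim t^{1-\alpha-N}$ on $(0,1]$ and $|r_\alpha^{(N)}(t)|\lesssim t^{-\alpha-N}$ on $[1,\infty)$, and Schur's test with weight $x^{-1/2}$ for kernels homogeneous of degree $-1$. Step~1 is correct. Step~2 is correct and slightly cleaner than the paper. The paper splits into the regions $x+y\le1$ (Schur) and $x+y>1$ (Hilbert--Schmidt). Since $t^{-\alpha-N}\le t^{1-\alpha-N}$ for $t\ge1$, your single homogeneous majorant covers both regions, so that split is unnecessary.

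Step~3, however, fails as written. The majorant $x^{N+\rho}y^\rho(x+y)^{-2\rho-N}$ is homogeneous of degree $(N+\rho)+\rho-(2\rho+N)=0$, not $-1$. A positive kernel of degree $0$ is unbounded on $L^2(\mathbb R_+)$: test it on $\ind_{[R,2R]}$ and let $R\to\infty$. Equivalently, with weight $x^{-1/2}$ the Schur quotient equals $x$ times your integral, so the convergence of $\int_0^\infty t^{\rho-1/2}(1+t)^{-2\rho-N}\,\dd t$ proves nothing.

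The cause is that you again discarded $(1+t)^{-1}\le1$, as in Step~2. But the prefactor $x^{N+\rho}y^\rho$ has one degree more than the Step~2 prefactor $x^{\rho-\delta}y^{\rho+N-1+\delta}$. So here you must use $(1+t)^{-1}\le t^{-1}$ instead, which gives the global bound $|r_\alpha^{(N)}(t)|\lesssim t^{-\alpha-N}$ for all $t>0$. This is valid near $0$ because $t^{1-\alpha-N}\le t^{-\alpha-N}$ for $t\le1$. The resulting majorant $x^{N+\rho}y^\rho(x+y)^{-\alpha-N}$ has degree $-1$, and its Schur integral is
\[
\int_0^\infty t^{\rho-1/2}(1+t)^{-\alpha-N}\,\dd t=B\bigl(\rho+\tfrac12,\rho+N+\tfrac12\bigr)<\infty.
\]
This is exactly the paper's argument for the second assertion, and with this correction your proof is complete. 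The two estimates use opposite ends of your Step~1 bound. Step~2 needs the gain at $t=0$ and may discard the extra decay at infinity. Step~3 needs the decay at infinity and must give up the gain at $0$.
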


\smallskip
\noindent\textit{Proof.}
Since
\[
t\zeta(1+t)=1+O(t),\qquad t\downarrow0,
\]
the branch of $(t\zeta(1+t))^\alpha$ which equals $1$ at $t=0$ is analytic near $0$, and therefore, for every integer $N\ge0$,
\[
|r_\alpha^{(N)}(t)|\lesssim t^{-\alpha-N+1},\qquad 0<t\le1.
\]
For $t\ge1$, the derivatives of $\zeta(1+t)^\alpha-1$ decay exponentially while $(t^{-\alpha})^{(N)}=O(t^{-\alpha-N})$, so
\[
|r_\alpha^{(N)}(t)|\lesssim t^{-\alpha-N},\qquad t\ge1.
\]

Also put
\[
A=x\frac{\mathrm d}{\mathrm d x}-\rho.
\]
Then
\[
P_\rho^N[x^\rho F(x)]=x^\rho F^{(N)}(x),
\qquad
x^NP_\rho^N=\prod_{j=0}^{N-1}(A-j).
\]

Indeed, write
\[
u(y)=y^{N-1+\delta}v(y),\qquad v\in L^2(\mathbb R_+).
\]
Since $R_\alpha$ has kernel $(xy)^\rho r_\alpha(x+y)$,
\[
x^{-\delta}P_\rho^N R_\alpha u(x)
=
\int_0^\infty x^{\rho-\delta}y^{\rho+N-1+\delta}r_\alpha^{(N)}(x+y)v(y)\dd y.
\]
Thus it remains to prove that the kernel
\[
x^{\rho-\delta}y^{\rho+N-1+\delta}r_\alpha^{(N)}(x+y)
\]
defines a bounded operator on $L^2(\mathbb R_+)$. Write it as the sum of its restrictions to $x+y\le1$ and $x+y>1$. For the first term,
\[
\ind_{\{x+y\le1\}}x^{\rho-\delta}y^{\rho+N-1+\delta}|r_\alpha^{(N)}(x+y)|
\lesssim
\ind_{\{x+y\le1\}}\frac{x^{\rho-\delta}y^{\rho+N-1+\delta}}{(x+y)^{\alpha+N-1}}.
\]
The kernel on the right is dominated by the homogeneous kernel obtained by replacing the indicator by $1$. With $w(x)=x^{-1/2}$,
\[
\frac1{w(x)}\int_0^\infty
\frac{x^{\rho-\delta}y^{\rho+N-1+\delta}}{(x+y)^{\alpha+N-1}}w(y)\dd y
=
B\!\left(\rho+N-\frac12+\delta,\frac\alpha2-\delta\right)<\infty,
\]
and similarly
\[
\frac1{w(y)}\int_0^\infty
\frac{x^{\rho-\delta}y^{\rho+N-1+\delta}}{(x+y)^{\alpha+N-1}}w(x)\dd x
=
B\!\left(\frac\alpha2-\delta,\rho+N-\frac12+\delta\right)<\infty.
\]
Thus the first term defines a bounded operator on $L^2(\mathbb R_+)$.

For the second term,
\[
\ind_{\{x+y>1\}}x^{\rho-\delta}y^{\rho+N-1+\delta}|r_\alpha^{(N)}(x+y)|
\lesssim
\ind_{\{x+y>1\}}\frac{x^{\rho-\delta}y^{\rho+N-1+\delta}}{(x+y)^{\alpha+N}}.
\]
The square of its Hilbert--Schmidt norm is bounded by
\[
\iint_{x+y>1}
\frac{x^{2\rho-2\delta}y^{2\rho+2N-2+2\delta}}{(x+y)^{2\alpha+2N}}\dd x\dd y.
\]
With $t=x+y$ and $z=x/(x+y)$, this becomes
\[
\int_1^\infty t^{-3}\dd t
\int_0^1 z^{2\rho-2\delta}(1-z)^{2\rho+2N-2+2\delta}\dd z<\infty,
\]
because
\[
2\rho-2\delta>-1\iff\delta<\frac\alpha2,
\]
while $2\rho+2N-2+2\delta>-1$ is automatic for $N\ge1$. Hence the second term is Hilbert--Schmidt, and the first assertion follows. Taking $\delta=0$ in the first assertion,
\[
P_\rho^N R_\alpha u\in L^2(\mathbb R_+)
\qquad\text{whenever}\qquad
x^{-N+1}u\in L^2(\mathbb R_+).
\]
Moreover, the estimates for $r_\alpha^{(N)}$ give, for all $t>0$,
\[
|r_\alpha^{(N)}(t)|\lesssim t^{-\alpha-N}.
\]
Hence the kernel of $x^NP_\rho^N R_\alpha$ satisfies
\[
|x^{N+\rho}y^\rho r_\alpha^{(N)}(x+y)|
\lesssim
\frac{x^{N+\rho}y^\rho}{(x+y)^{N+\alpha}}.
\]
The two Schur integrals for this homogeneous kernel are finite, and therefore $x^NP_\rho^N R_\alpha$ is bounded on $L^2(\mathbb R_+)$.
\hfill$\square$

\medskip

Whenever the integral converges, write
\[
  (\mathfrak Mq)(z)=\int_0^\infty q(x)x^{z-1}\,\dd x.
\]

\begin{lemma}\label{lem:embedded-mellin-strip}
Let \(a,b>0\).
If
\[
  x^{-a}q,\ x^bq\in L^2(\mathbb R_+),
\]
then \(\mathfrak Mq\) is analytic in
\[
  \frac12-a<\Re z<\frac12+b
\]
and
\[
  \sup_{-a<\sigma<b}
  \int_{\mathbb R}
  \left|
    (\mathfrak Mq)\left(\frac12+\sigma+i\tau\right)
  \right|^2\dd\tau<\infty.
\]

Conversely, let \(Q\) be analytic in this strip and suppose that
\[
  \sup_{\sigma_1\leq\sigma\leq\sigma_2}
  \int_{\mathbb R}
  \left|
    Q\left(\frac12+\sigma+i\tau\right)
  \right|^2\dd\tau<\infty
\]
whenever
\[
  -a<\sigma_1<\sigma_2<b.
\]
If
\[
  q=\mathcal M^{-1}
  \left[
    \frac1{\sqrt{2\pi}}
    Q\left(\frac12+i\,\cdot\right)
  \right],
\]
then
\[
  x^\sigma q\in L^2(\mathbb R_+),
  \qquad -a<\sigma<b,
\]
and \(Q=\mathfrak Mq\) throughout the strip.
\end{lemma}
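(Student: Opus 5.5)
The plan is to reduce everything to the Plancherel theorem for the unitary Mellin transform \(\mathcal M\), applied to weighted versions of \(q\). The key identity is, for real \(\sigma\) and \(x^\sigma q\in L^2(\mathbb R_+)\),
\[
  (\mathfrak Mq)\left(\tfrac12+\sigma+i\tau\right)
  =\int_0^\infty x^{\sigma}q(x)\,x^{-1/2+i\tau}\dd x
  =\sqrt{2\pi}\,\bigl(\mathcal M(x^\sigma q)\bigr)(\tau),
\]
valid first for nice \(q\) and in general as an \(L^2\) identity in \(\tau\). This gives \(\int_{\mathbb R}|(\mathfrak Mq)(\tfrac12+\sigma+i\tau)|^2\dd\tau=2\pi\|x^\sigma q\|_2^2\).

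For the direct part, I note that \(x^{-a}q,x^bq\in L^2\) implies \(x^\sigma q\in L^2\) for all \(-a\le\sigma\le b\), since \(|x^\sigma q|\le|x^{-a}q|+|x^bq|\). This gives the uniform bound \(\|x^\sigma q\|_2^2\le\|x^{-a}q\|_2^2+\|x^bq\|_2^2\). For analyticity in the open strip, I split \(q=q\ind_{(0,1)}+q\ind_{[1,\infty)}\). For \(\Re z=\tfrac12+\sigma\) with \(-a<\sigma\), I write \(\int_0^1|q|x^{\sigma-1/2}\dd x\le\|x^{-a}q\|_2\,\|x^{\sigma+a-1/2}\|_{L^2(0,1)}\), which is finite since \(\sigma+a>0\). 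The piece on \([1,\infty)\) is treated symmetrically using \(b-\sigma>0\). Both bounds are locally uniform in \(z\), so the integral converges absolutely and locally uniformly, and Morera's theorem (or differentiation under the integral sign) gives analyticity. The pointwise integral coincides with the \(L^2\) Mellin transform because the integral converges absolutely there.

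For the converse, I set \(Q_\sigma(\tau)=Q(\tfrac12+\sigma+i\tau)\) and \(q_\sigma=\mathcal M^{-1}[(2\pi)^{-1/2}Q_\sigma]\in L^2\). The main step is to show \(q_\sigma=x^\sigma q_0\), that is, a contour shift. The approach is a Paley–Wiener type argument.
\begin{enumerate}
\item Take \(\sigma_1<0<\sigma_2\) inside \((-a,b)\) and use the uniform \(L^2\) bound on vertical lines in \([\sigma_1,\sigma_2]\).
\item By Fubini, \(\int_{\sigma_1}^{\sigma_2}\int|Q|^2\dd\tau\dd\sigma<\infty\), so there are heights \(T_k\to\pm\infty\) along which \(\int_{\sigma_1}^{\sigma_2}|Q(\tfrac12+\sigma\pm iT_k)|^2\dd\sigma\to0\).
\item Cauchy's theorem on the rectangles then shows that for any test function \(g\in C_c^\infty(\mathbb R_+)\), the pairing of \(x^{-1/2-i\tau}\)-integrals against \(Q\) on the line \(\sigma\) equals the one on the line \(0\) after the substitution \(g\mapsto x^{\sigma}g\). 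The horizontal contributions vanish by Cauchy–Schwarz, since \(\mathfrak M g\) is entire and rapidly decreasing on vertical lines.
\end{enumerate}
Concretely, \(\langle q_\sigma,g\rangle\) is computed via Parseval as \(\frac1{2\pi}\int Q(\tfrac12+\sigma+i\tau)\overline{(\mathfrak M\bar g)}\cdots\). Shifting the contour shows it equals \(\langle q_0,x^\sigma g\rangle\), hence \(q_\sigma=x^\sigma q_0\) almost everywhere. So \(x^\sigma q\in L^2\) for every \(\sigma\in(-a,b)\), with \(q=q_0\).

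Finally, having \(x^{\sigma}q\in L^2\) for \(\sigma\) slightly inside both ends, the direct part applied with smaller \(a',b'\) shows \(\mathfrak Mq\) is analytic on the slightly smaller strip. It agrees with \(Q\) on the line \(\Re z=\tfrac12\) almost everywhere, hence everywhere by analyticity, and hence throughout \(-a<\Re z-\tfrac12<b\) by exhausting the strip. I expect the contour-shift step, specifically handling the horizontal segments with only \(L^2\) control, to be the main obstacle; choosing good heights via the Fubini averaging argument is the device that makes it work.
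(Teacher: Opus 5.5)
Your proposal is correct and follows essentially the same route as the paper. The direct part uses Plancherel for the weighted functions \(x^\sigma q\) together with Cauchy--Schwarz near \(0\) and \(\infty\). For the converse, you shift the contour of \(Q\) paired against the entire, rapidly decaying Mellin transform of a test function, and you control the horizontal sides by choosing heights \(T_k\) via Fubini, exactly as the paper does.

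The only cosmetic difference is in how you conclude. You identify \(x^\sigma q\) directly with \(\mathcal M^{-1}[(2\pi)^{-1/2}Q(\tfrac12+\sigma+i\,\cdot)]\) and finish with the identity theorem. The paper instead first bounds the functional \(\varphi\mapsto\int x^\sigma q\varphi\) to get \(x^\sigma q\in L^2\), and then identifies the Mellin transform on each line separately.
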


\smallskip
\noindent\textit{Proof.}
Recall that 
\[
  \mathfrak Mq\left(\frac12+\sigma+i\tau\right)
  =\sqrt{2\pi}\,\mathcal M(x^\sigma q)(\tau)
\]
If $x^{-a}q,x^{b}q\in L^2(\mathbb R_+)$ for some $a,b>0$, then the Cauchy--Schwarz inequality, applied near $0$ and near infinity, shows that $\mathfrak M q$ is analytic in
\[
\frac12-a<\Re z<\frac12+b.
\]
For $-a<\sigma<b$,
\[
\mathfrak M q\left(\frac12+\sigma+i\tau\right)
=\sqrt{2\pi}\,\mathcal M(x^\sigma q)(\tau),
\]
and hence
\[
\int_{\mathbb R}\left|\mathfrak M q\left(\frac12+\sigma+i\tau\right)\right|^2\dd\tau
=2\pi\int_0^\infty |q(x)|^2x^{2\sigma}\dd x.
\]
Moreover,
\[
\begin{aligned}
&\sup_{-a<\sigma<b}
\int_{\mathbb R}\left|\mathfrak M q\left(\frac12+\sigma+i\tau\right)\right|^2\dd\tau\\
&\qquad\le2\pi\left(
\int_0^1|q(x)|^2x^{-2a}\dd x+
\int_1^\infty|q(x)|^2x^{2b}\dd x
\right)<\infty.
\end{aligned}
\]

We also need the converse implication. Suppose $Q$ is analytic in
\[
\frac12-a<\Re z<\frac12+b
\]
and, for every $-a<\sigma_1<\sigma_2<b$,
\[
\sup_{\sigma_1\le\sigma\le\sigma_2}
\int_{\mathbb R}\left|Q\left(\frac12+\sigma+i\tau\right)\right|^2\dd\tau<\infty.
\]
Define
\[
q=\mathcal M^{-1}\!\left[\frac1{\sqrt{2\pi}}Q\left(\frac12+i\,\cdot\right)\right].
\]
Then
\[
Q\left(\frac12+i\tau\right)=\sqrt{2\pi}\,(\mathcal M q)(\tau)
\]
for almost every $\tau$. Fix $\sigma\in(-a,b)$ and $\varphi\in C_c^\infty(\mathbb R_+)$, and put
\[
\Phi_\sigma(z)=\int_0^\infty\varphi(x)x^{\sigma-z}\dd x.
\]
Writing $x=e^t$ and integrating by parts in $t$ shows that, on every fixed vertical strip and for every integer $m\ge0$,
\[
|\Phi_\sigma(c+i\tau)|\lesssim_m(1+|\tau|)^{-m}.
\]
Assume first $\sigma>0$. The uniform $L^2$ bound implies
\[
\int_{\mathbb R}\int_0^\sigma
\left|Q\left(\frac12+s+i\tau\right)\right|^2ds\,\dd\tau<\infty.
\]
Hence there is a sequence $T_k\to\infty$ such that
\[
\int_0^\sigma\left(
\left|Q\left(\frac12+s+iT_k\right)\right|^2+
\left|Q\left(\frac12+s-iT_k\right)\right|^2
\right)ds\to0.
\]
Cauchy's integral theorem applied to $Q(z)\Phi_\sigma(z)$ on the rectangle with vertical sides $\Re z=1/2$ and $\Re z=1/2+\sigma$, together with the Cauchy--Schwarz inequality and the rapid decay of $\Phi_\sigma$, gives
\[
\int_0^\infty x^\sigma q(x)\varphi(x)\dd x
=\frac1{\sqrt{2\pi}}\int_{\mathbb R}
Q\left(\frac12+\sigma+i\tau\right)(\mathcal M\varphi)(-\tau)\,\dd\tau.
\]
For $\sigma<0$ the same formula follows by reversing the rectangle. Consequently,
\[
\left|\int_0^\infty x^\sigma q(x)\varphi(x)\dd x\right|
\le\frac1{\sqrt{2\pi}}
\left\|Q\left(\frac12+\sigma+i\,\cdot\right)\right\|_2\|\varphi\|_2.
\]
Since $C_c^\infty(\mathbb R_+)$ is dense in $L^2(\mathbb R_+)$,
\[
x^\sigma q\in L^2(\mathbb R_+),\qquad -a<\sigma<b.
\]
For $h=x^\sigma q$, the Mellin transform satisfies
\[
\int_0^\infty h(x)\varphi(x)\dd x
=\int_{\mathbb R}(\mathcal M h)(\tau)(\mathcal M\varphi)(-\tau)\,\dd\tau.
\]
Comparing this with the preceding integral identity, and using the density of
\[
\{(\mathcal M\varphi)(-\,\cdot):\varphi\in C_c^\infty(\mathbb R_+)\}
\]
in $L^2(\mathbb R)$, gives
\[
\mathcal M(x^\sigma q)(\tau)=\frac1{\sqrt{2\pi}}Q\left(\frac12+\sigma+i\tau\right)
\]
for almost every $\tau$. Since $\sigma$ is arbitrary, $Q(z)=\mathfrak M q(z)$ throughout the strip.
\hfill$\square$

\medskip
\begin{lemma}\label{lem:embedded-gamma-modulus}

Let $\alpha>1$ and put $\psi=\Gamma'/\Gamma$. If $x>0$, $y\in\mathbb R$, and
\[
\Gamma\left(\frac\alpha2+x+iy\right)
\Gamma\left(\frac\alpha2-x-iy\right)
\]
is finite, then
\[
\left|\Gamma\left(\frac\alpha2+x+iy\right)
\Gamma\left(\frac\alpha2-x-iy\right)\right|
>
\left|\Gamma\left(\frac\alpha2+iy\right)\right|^2.
\]

\end{lemma}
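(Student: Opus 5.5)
The plan is to reduce the inequality to an infinite product over the poles of \(\Gamma\), and then control the finitely many factors that fail termwise by using the reflection formula. Put \(s=\alpha/2>1/2\), \(w=x+iy\) and \(a_n=n+s\) for \(n\geq0\).

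\emph{Step 1 (product formula).} From the Weierstrass product \(1/\Gamma(z)=ze^{\gamma z}\prod_{n\ge1}(1+z/n)e^{-z/n}\), the exponential factors cancel in the symmetric combination. This should give
\[
\frac{\Gamma(s+w)\Gamma(s-w)}{\Gamma(s)^2}=\prod_{n\geq0}\frac{a_n^2}{a_n^2-w^2},
\qquad
\frac{|\Gamma(s+iy)|^2}{\Gamma(s)^2}=\prod_{n\geq0}\frac{a_n^2}{a_n^2+y^2}.
\]
The second identity is the one already used in Step 1 of the proof of Theorem~\ref{thm:weighted-Hankel-no-sc}. Hence the claim is equivalent to
\[
\prod_{n\ge0}\frac{|a_n^2-w^2|^2}{(a_n^2+y^2)^2}<1 .
\]
A direct computation gives
\[
|a_n^2-w^2|^2-(a_n^2+y^2)^2=x^2\bigl(x^2+2y^2-2a_n^2\bigr).
\]
So every factor with \(2a_n^2> x^2+2y^2\) is strictly less than \(1\). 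In particular, the inequality follows immediately whenever \(x^2+2y^2\le 2s^2\), because then all factors are \(<1\) (strictly, since \(x>0\)).

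\emph{Step 2 (the large-\(|w|\) regime).} When \(x^2+2y^2>2s^2\), finitely many factors, namely those with \(a_n<|w|\) roughly, exceed \(1\), so the termwise argument breaks down. This is the main obstacle. Here I would first use the reflection formula
\[
\Gamma(s-w)=\frac{\pi}{\sin(\pi(s-w))\,\Gamma(1-s+w)},
\]
which rewrites the left-hand side as
\[
\frac{\pi\,|\Gamma(s+w)|}{|\sin\pi(s-w)|\,|\Gamma(1-s+w)|}.
\]
The plan is then to bound the pieces separately:
\begin{itemize}
\item \(|\sin\pi(s-w)|\le\cosh(\pi y)\);
\item \(|\Gamma(s+w)/\Gamma(1-s+w)|\) is bounded below via the product formula again, now for a shift by \(2s-1>0\) with \(\Re w>0\), where the factors go the favourable way;
\item \(|\Gamma(s+iy)|^2\) is bounded above by \(\Gamma(s)^2\prod_n a_n^2/(a_n^2+y^2)\), which is exponentially small in \(|y|\) by Stirling.
\end{itemize}
The comparison \(\pi|\Gamma(s+w)|/(\cosh(\pi y)|\Gamma(1-s+w)|)\) versus \(|\Gamma(s+iy)|^2\) then has to be checked. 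I expect this to reduce to an inequality like \(|\Gamma(s+iy)|^2\cosh(\pi y)\le\pi\,\Gamma(s)^2\times(\text{explicit factor})\) for \(s>1/2\), with equality-type behaviour at \(s=1/2\), where \(|\Gamma(1/2+iy)|^2=\pi/\cosh\pi y\).

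\emph{Step 3 (fallback: a variational argument).} If the split in Step 2 does not close cleanly, I would instead fix \(y\) and study
\[
F(x)=\log|\Gamma(s+x+iy)\Gamma(s-x-iy)|
\]
on each interval between consecutive poles (poles occur only when \(y=0\) and \(x=s+k\)). We have \(F(0)=\log|\Gamma(s+iy)|^2\) and \(F'(0)=0\), and the quantity \(F''(0)=2\Re\psi'(s+iy)\) is positive for small \(|y|\), which gives the inequality near \(x=0\). The difficulty is excluding an interior minimum below \(F(0)\). For this I would combine the partial-fraction series
\[
F'(x)=\sum_{n}\frac{2x(a_n^2-|w|^2)}{|a_n^2-w^2|^2}
\]
with the reflection-formula representation for \(x\) beyond the first few \(a_n\).

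I expect the bookkeeping in Step 2/3, controlling the finitely many "bad" factors \(a_n<|w|\), to be the genuinely delicate part of the proof.
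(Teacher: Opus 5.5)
\textbf{There is a genuine gap: the proposal proves the lemma only on the region $x^2+2y^2\le 2s^2$.}

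Your Step~1 is correct. The Weierstrass product gives $\Gamma(s+w)\Gamma(s-w)/\Gamma(s)^2=\prod_{n\ge0}a_n^2/(a_n^2-w^2)$, and the identity $|a_n^2-w^2|^2-(a_n^2+y^2)^2=x^2(x^2+2y^2-2a_n^2)$ settles the lemma on $x^2+2y^2\le 2s^2$. The rest of the lemma's content lies in the complementary region, and Step~2 cannot work there as designed. At $x=0$ the two sides of the lemma coincide. Your lower bound, obtained from $|\sin\pi(s-w)|\le\cosh\pi y$, reduces at $x=0$ to $|\Gamma(s+iy)|^2\,|\sin\pi(s-iy)|/\cosh(\pi y)$. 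This is strictly smaller than $|\Gamma(s+iy)|^2$ unless $s\in\frac12+\mathbb Z$. By continuity, the sufficient condition you would need to verify is false for all small $x>0$. Points with $0<x\ll1$ and $|y|>s$ lie in your Step~2 region, and the further bounds (on $|\Gamma(s+w)/\Gamma(1-s+w)|$ and via Stirling) only lose more. Step~3 names the right function but leaves the decisive point open, and it asserts $F''(0)>0$ only for small $|y|$. Even your product route needs more than that: expanding it to second order gives $\log\prod_n |a_n^2-w^2|/(a_n^2+y^2)=-x^2\Re\psi'(s+iy)+O(x^4)$, so positivity of $\Re\psi'(s+iy)$ for \emph{all} $y$ is unavoidable.

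\textbf{First missing idea: monotonicity of $F$ on $0<x<s$ for every $y$.} One has $F'(x)=\Re\psi(s+x+iy)-\Re\psi(s-x+iy)$, and this is positive for every $0<x<s$ and every $y$.
\begin{itemize}
\item $\Re\psi'$ is bounded and harmonic on $\Re z>\frac12$ and tends to $0$ at infinity.
\item Differentiating the reflection formula gives its boundary values $2\Re\psi'(\frac12+it)=\pi^2/\cosh^2(\pi t)>0$.
\item The Poisson formula then gives $\Re\psi'(t+iy)>0$ for all $t\ge\frac12$. Hence $\Re\psi(\cdot+iy)$ is increasing on $[\frac12,\infty)$.
\item If $u=s-x<\frac12$, use $v=s+x>1-u$ together with $\Re\psi(1-u+iy)-\Re\psi(u+iy)=\pi\sin(2\pi u)/(\cosh 2\pi y-\cos 2\pi u)>0$.
\end{itemize}
Integrating $F'$ gives the strict inequality for $0<x<s$, with no restriction on $y$.

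\textbf{Second missing idea: the recurrence for larger $x$.} For larger $x$, $F$ is genuinely not monotone; for $y=0$ it blows up at every $x=s+k$. So analysing the sign of $F'$ globally is the wrong tool. Instead, write $x=k+r$ with $-\frac12<r\le\frac12$ and use $\Gamma(z+1)=z\Gamma(z)$. Passing from $r+j$ to $r+j+1$ multiplies the product by $(s+r+j+iy)/(s-r-j-1-iy)$. Moreover $|s+r+j+iy|^2-|s-r-j-1-iy|^2=(2s-1)(2r+2j+1)>0$, and the finiteness hypothesis guarantees that the denominators do not vanish. Since $|r|\le\frac12<s$ and the quantity is even in $x$, this reduces every case to the first step.
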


\smallskip
\noindent\textit{Proof.}
We need a strict Gamma modulus inequality. First, for $x\ge1/2$,
\[
\Re\psi'(x+iy)>0.
\]
Indeed, differentiation of the Gamma reflection formula gives
\[
2\Re\psi'\left(\frac12+iy\right)
=\frac{\pi^2}{\cosh^2(\pi y)}>0.
\]
The function $\psi'$ has no pole in $\Re z\ge1/2$, and
\[
\psi'(z)=z^{-1}+O(|z|^{-2})
\]
uniformly in this half-plane as $|z|\to\infty$. Hence $\Re\psi'$ is bounded and harmonic in $\Re z>1/2$, continuous on the boundary, and tends to $0$ at infinity. The Poisson integral formula gives
\[
\Re\psi'(x+iy)
=\frac1\pi\int_{\mathbb R}
\frac{x-1/2}{(y-t)^2+(x-1/2)^2}
\Re\psi'\left(\frac12+it\right)dt>0.
\]
We claim that
\[
\Re\psi(v+iy)>\Re\psi(u+iy),
\qquad0<u<v,
\qquad u+v>1.
\]
If $u\ge1/2$, then $v>u\ge1/2$, and the claim follows by integrating $\Re\psi'(s+iy)>0$ from $u$ to $v$. If $0<u<1/2$, then $1-u>1/2$ and $v>1-u$. Hence
\[
\Re\psi(v+iy)>\Re\psi(1-u+iy),
\]
while the Gamma reflection formula gives
\[
\Re\psi(1-u+iy)-\Re\psi(u+iy)
=
\frac{\pi\sin(2\pi u)}{\cosh(2\pi y)-\cos(2\pi u)}>0.
\]
Thus the claim holds in both cases.

For $0<x<\alpha/2$, set
\[
u=\frac\alpha2-x,
\qquad
v=\frac\alpha2+x.
\]
Then $0<u<v$ and $u+v=\alpha>1$, so
\[
\begin{aligned}
\frac{\mathrm d}{\mathrm d x}\log\left|
\Gamma\left(\frac\alpha2+x+iy\right)
\Gamma\left(\frac\alpha2-x-iy\right)
\right|
&=\Re\psi\left(\frac\alpha2+x+iy\right)\\
&\quad-\Re\psi\left(\frac\alpha2-x+iy\right)>0.
\end{aligned}
\]
Integrating from $0$ to $x$,
\[
\left|
\Gamma\left(\frac\alpha2+x+iy\right)
\Gamma\left(\frac\alpha2-x-iy\right)
\right|
>
\left|\Gamma\left(\frac\alpha2+iy\right)\right|^2,
\qquad0<x<\frac\alpha2.
\]
Replacing $x$ by $-x$ gives the same inequality. Hence
\[
\left|
\Gamma\left(\frac\alpha2+x+iy\right)
\Gamma\left(\frac\alpha2-x-iy\right)
\right|
>
\left|\Gamma\left(\frac\alpha2+iy\right)\right|^2,
\qquad0<|x|<\frac\alpha2.
\]

To extend the inequality to every $x>0$, write
\[
x=k+r,
\qquad k\in\mathbb N_0,
\qquad-\frac12<r\le\frac12.
\]
For $j=0,\ldots,k-1$, the Gamma recurrence formula gives
\[
\frac{
\Gamma(\alpha/2+r+j+1+iy)\Gamma(\alpha/2-r-j-1-iy)}
{\Gamma(\alpha/2+r+j+iy)\Gamma(\alpha/2-r-j-iy)}
=
\frac{\alpha/2+r+j+iy}{\alpha/2-r-j-1-iy}.
\]
Assume that
\[
\Gamma\left(\frac\alpha2+x+iy\right)
\Gamma\left(\frac\alpha2-x-iy\right)
\]
is finite. Then
\[
\frac\alpha2-r-j-1-iy\ne0,
\qquad0\le j\le k-1.
\]
Indeed, if this quantity were zero for some $j$, then $y=0$ and $\alpha/2-r=j+1$. Since $x=k+r$,
\[
\frac\alpha2-x=j+1-k\in\{0,-1,\ldots\},
\]
so $\Gamma(\alpha/2-x)$ would have a pole, contradicting the assumed finiteness. Furthermore,
\[
\left|\frac\alpha2+r+j+iy\right|^2
-
\left|\frac\alpha2-r-j-1-iy\right|^2
=(\alpha-1)(2r+2j+1)>0.
\]
Hence, if $k\ge1$,
\[
\frac{
|\Gamma(\alpha/2+x+iy)\Gamma(\alpha/2-x-iy)|}
{|\Gamma(\alpha/2+r+iy)\Gamma(\alpha/2-r-iy)|}
=
\prod_{j=0}^{k-1}
\left|
\frac{\alpha/2+r+j+iy}{\alpha/2-r-j-1-iy}
\right|>1.
\]
If $r\ne0$, the denominator is strictly larger than $|\Gamma(\alpha/2+iy)|^2$ by the local inequality. If $r=0$, then $k\ge1$ and the product is strictly larger than $1$; if $k=0$, the local inequality applies directly. Hence, whenever the left-hand side is finite,
\[
|\Gamma(\alpha/2+x+iy)\Gamma(\alpha/2-x-iy)|
>|\Gamma(\alpha/2+iy)|^2,
\qquad x>0.
\]

\hfill$\square$

\bigskip
\begin{proof}[Proof of part~(5) of Theorem~\ref{thm:complete-spectrum-Ta}] Put
\[
\rho=\frac{\alpha-1}{2}.
\]
Assume, to obtain a contradiction, that
\begin{equation}\label{eq:1}
K_\alpha f=\lambda f,
\qquad 0<\lambda\le \Lambda_\alpha,
\qquad 0\ne f\in L^2(\mathbb R_+).
\end{equation}
Since the kernel is real-valued, one of the real and imaginary parts of $f$ is a non-zero eigenfunction for the same eigenvalue. We therefore take $f$ real-valued. We also use from now on the continuous representative
\[
f(x)=\lambda^{-1}\int_0^\infty (xy)^\rho\phi_\alpha(x+y)f(y)\dd y.
\]

For $t\ge1$,
\[
0<\zeta(1+t)^\alpha-1\lesssim 2^{-t}.
\]
Hence, by \eqref{eq:1} and the Cauchy--Schwarz inequality,
\begin{equation}\label{eq:2}
|f(x)|\lesssim x^\rho 2^{-x},\qquad x\ge1.
\end{equation}

Define
\[
r_\alpha(t)=\phi_\alpha(t)-t^{-\alpha},
\qquad
R_\alpha=K_\alpha-C_\rho,
\]
so that the kernel of $R_\alpha$ is $(xy)^\rho r_\alpha(x+y)$. Since
\[
t\zeta(1+t)=1+O(t),\qquad t\downarrow0,
\]
the branch of $(t\zeta(1+t))^\alpha$ which equals $1$ at $t=0$ is analytic near $0$, and therefore, for every integer $N\ge0$,
\[
|r_\alpha^{(N)}(t)|\lesssim t^{-\alpha-N+1},\qquad 0<t\le1.
\]
For $t\ge1$, the derivatives of $\zeta(1+t)^\alpha-1$ decay exponentially while $(t^{-\alpha})^{(N)}=O(t^{-\alpha-N})$, so
\[
|r_\alpha^{(N)}(t)|\lesssim t^{-\alpha-N},\qquad t\ge1.
\]

By Lemma~\ref{lem:embedded-remainder-regularity}, for every integer $N\ge1$ and $0\le\delta<\alpha/2$,
\begin{equation}\label{eq:3}
x^{-\delta}P_\rho^N R_\alpha u\in L^2(\mathbb R_+)
\qquad\text{whenever}\qquad
x^{-N+1-\delta}u\in L^2(\mathbb R_+).
\end{equation}
Taking $\delta=0$ in \eqref{eq:3},
\[
P_\rho^N R_\alpha u\in L^2(\mathbb R_+)
\qquad\text{whenever}\qquad
x^{-N+1}u\in L^2(\mathbb R_+).
\]
Moreover, Lemma~\ref{lem:embedded-remainder-regularity} gives that $x^NP_\rho^N R_\alpha$ is bounded on $L^2(\mathbb R_+)$. Applying these two estimates to the eigenfunction $f$, we obtain
\[
x^NP_\rho^N R_\alpha f,\ P_\rho^N R_\alpha f\in L^2(\mathbb R_+)
\qquad\text{whenever}\qquad
x^{-N+1}f\in L^2(\mathbb R_+).
\]

Assume $x^{-N+1}f\in L^2(\mathbb R_+)$ and, for $0\le k\le N$, put
\[
v_k=x^kP_\rho^kR_\alpha f.
\]
On $(0,1)$, the hypothesis implies $x^{-k+1}f\in L^2$ for $1\le k\le N$, while on $(1,\infty)$ this follows from \eqref{eq:2}. Hence the estimates proved above give $v_k\in L^2$ for $0\le k\le N$. Moreover,
\[
v_{k+1}=(A-k)v_k,\qquad0\le k<N,
\]
so
\[
Av_k=v_{k+1}+kv_k\in L^2.
\]
Choose $\chi\in C_c^\infty(\mathbb R)$ with $\chi=1$ on $[-1,1]$ and set
\[
\chi_R(x)=\chi\!\left(\frac{\log x}{R}\right).
\]
Then
\[
A(\chi_Rv_k)=\chi_RAv_k+\frac1R\chi'\!\left(\frac{\log x}{R}\right)v_k.
\]
Therefore
\[
\chi_Rv_k\to v_k,
\qquad
A(\chi_Rv_k)\to Av_k
\qquad\text{in }L^2(\mathbb R_+)
\]
as $R\to\infty$. Since $\chi_Rv_k$ has compact support in $(0,\infty)$, integration by parts gives
\[
\mathcal M(A(\chi_Rv_k))(\xi)
=-\left(\frac12+\rho+i\xi\right)\mathcal M(\chi_Rv_k)(\xi).
\]
Passing to the $L^2$ limit and iterating $v_{k+1}=(A-k)v_k$ yields
\begin{equation}\label{eq:4}
\mathcal M(x^NP_\rho^N R_\alpha f)(\xi)
=(-1)^N\prod_{j=0}^{N-1}\left(\frac12+\rho+j+i\xi\right)\mathcal M(R_\alpha f)(\xi)
\end{equation}
for almost every $\xi$.

We now apply Lemma~\ref{lem:embedded-mellin-strip} to the eigenfunction. Let $N\ge1$ and assume
\[
x^{-N+1}f\in L^2(\mathbb R_+).
\]
Set
\[
q_N=x^NP_\rho^N R_\alpha f,
\qquad
\Psi_N=\mathfrak M q_N.
\]
The estimates above give
\[
q_N\in L^2(\mathbb R_+),
\qquad
x^{-N}q_N=P_\rho^N R_\alpha f\in L^2(\mathbb R_+).
\]
We also need $\Re z=1/2$ to lie strictly inside the analytic domain of $\Psi_N$. It is enough to prove $x^{1/2}q_N\in L^2(\mathbb R_+)$. On $(0,1)$ this follows from $q_N\in L^2$. For $x\ge1$, the large-$t$ estimate for $r_\alpha^{(N)}$ gives
\[
|q_N(x)|\lesssim x^{N+\rho}\int_0^\infty y^\rho(x+y)^{-\alpha-N}|f(y)|\dd y
\le x^{\rho-\alpha}\int_0^\infty y^\rho|f(y)|\dd y.
\]
The last integral is finite by the Cauchy--Schwarz inequality on $(0,1)$ and \eqref{eq:2} on $(1,\infty)$. Since $\rho-\alpha=-1-\rho$,
\[
|q_N(x)|\lesssim x^{-1-\rho},\qquad x\ge1.
\]
Therefore
\[
\int_1^\infty x|q_N(x)|^2\dd x
\lesssim\int_1^\infty x^{-1-2\rho}\dd x
=\int_1^\infty x^{-\alpha}\dd x<\infty.
\]
It follows that $\Psi_N$ is analytic in
\[
\frac12-N<\Re z<1.
\]

Put $F(z)=\mathfrak M f(z)$ on $\Re z=1/2$ and
\[
\mathfrak m_\rho(z)=\frac{\Gamma(\rho+z)\Gamma(1+\rho-z)}{\Gamma(\alpha)}.
\]
The Mellin representation of $C_\rho$ and the eigenvalue equation give
\[
\mathcal M(R_\alpha f)(\xi)
=\left(\lambda-\mathfrak m_\rho\left(\frac12+i\xi\right)\right)\mathcal M f(\xi).
\]
Combining this with \eqref{eq:4}, and using
\[
F\left(\frac12+i\xi\right)=\sqrt{2\pi}\,\mathcal M f(\xi),
\qquad
\Psi_N\left(\frac12+i\xi\right)=\sqrt{2\pi}\,\mathcal M q_N(\xi),
\]
we obtain, for almost every $z$ on $\Re z=1/2$,
\[
\Psi_N(z)=(-1)^N
\left(\prod_{j=0}^{N-1}(z+\rho+j)\right)
\left(\lambda-\mathfrak m_\rho(z)\right)F(z).
\]
Thus, wherever the last factor is non-zero,
\begin{equation}\label{eq:5}
F(z)=
\frac{(-1)^{N+1}\Psi_N(z)}
{\left(\prod_{j=0}^{N-1}(z+\rho+j)\right)
\left(\mathfrak m_\rho(z)-\lambda\right)}.
\end{equation}
For this fixed $N$, \eqref{eq:5} is initially an almost-everywhere identity on $\Re z=1/2$. The product in its denominator is holomorphic throughout the analytic strip of $\Psi_N$. Indeed, the factor $\Gamma(\rho+z)$ has simple poles at $z=-\rho-j$, $j=0,1,2,\ldots$, and the polynomial cancels the first $N$ of them. The next pole is
\[
-\rho-N=\frac12-\frac\alpha2-N<\frac12-N,
\]
while the first pole of $\Gamma(1+\rho-z)$ is
\[
z=1+\rho=\frac12+\frac\alpha2>1.
\]
Hence the right-hand side of \eqref{eq:5} defines a meromorphic continuation to the analytic strip of $\Psi_N$.

If $M>N$ and $x^{-M+1}f\in L^2(\mathbb R_+)$, then on $\Re z=1/2$,
\[
\Psi_M(z)=(-1)^{M-N}\prod_{j=N}^{M-1}(z+\rho+j)\Psi_N(z).
\]
Both sides are analytic on their common strip, and the identity theorem gives the same equality there. Substitution in \eqref{eq:5} shows that the meromorphic continuations obtained from different admissible integers $N$ agree on their common domains. They therefore define a single meromorphic function, denoted by $\widetilde F$. On $\Re z=1/2$,
\[
\widetilde F(z)=F(z)
\]
for almost every $z$.

Hence the possible singularities arising from the last factor in the denominator of \eqref{eq:5} can occur only at the solutions of
\begin{equation}\label{eq:6}
\mathfrak m_\rho(z)-\lambda=0.
\end{equation}
Let $z_0=1/2+i\tau_0$ be a solution of \eqref{eq:6}. If $\widetilde F$ had a pole of order $k\ge1$ at $z_0$, then
\[
|\widetilde F(1/2+i\tau)|\asymp|\tau-\tau_0|^{-k}
\]
for $\tau$ near $\tau_0$, and hence its square would not be locally integrable. But
\[
\widetilde F(1/2+i\tau)=F(1/2+i\tau)
\]
for almost every $\tau$, while
\[
\int_{\mathbb R}|F(1/2+i\tau)|^2\,\dd\tau=2\pi\|f\|_2^2<\infty.
\]
Thus every such singularity on $\Re z=1/2$ is removable.

Fix \(X>0\) and \(0\leq Y<1/2\). Writing \(z=\sigma+i\tau\), uniformly for
\[
  \frac12-X\leq \sigma\leq\frac12+Y,
\]
Stirling's formula gives
\[
  |\mathfrak m_\rho(\sigma+i\tau)|
  \lesssim_{X,Y}
  (1+|\tau|)^{\alpha-1}e^{-\pi|\tau|}
\]
for sufficiently large \(|\tau|\). Since $\lambda>0$, every solution of \eqref{eq:6} in this region satisfies $|\Im z|\le T$ for some $T=T(X,Y,\lambda)$. Choose an integer $N>X$. The function
\[
\left(\prod_{j=0}^{N-1}(z+\rho+j)\right)
\left(\mathfrak m_\rho(z)-\lambda\right)
\]
is holomorphic on an open set containing the compact rectangle
\[
\left\{z:\frac12-X\le\Re z\le\frac12+Y,\ |\Im z|\le T\right\}.
\]
Because $\mathfrak m_\rho$ is not constant, this holomorphic function is not identically zero. Every solution of \eqref{eq:6} in the rectangle is a zero of this function. Its zeros are isolated, and therefore there are only finitely many such solutions.

We now carry out the leftward induction. The starting value is $N=1$, because $f\in L^2(\mathbb R_+)$. Suppose, for some integer $N\ge1$, that
\[
x^{-N+1}f\in L^2(\mathbb R_+).
\]
If there is a solution of \eqref{eq:6} with
\[
\frac12-N<\Re z<\frac12
\]
at which $\widetilde F$ has a non-removable pole, we stop. Assume no such point exists. The singularities on $\Re z=1/2$ are removable, and the preceding finiteness result permits us to choose $0<\eta<1/4$ such that $\widetilde F$ is analytic in
\[
\frac12-N<\Re z<\frac12+\eta.
\]
Fix
\[
\frac12-N<\sigma_1<\sigma_2<\frac12+\eta.
\]
Uniformly for $\sigma\in[\sigma_1,\sigma_2]$,
\[
\mathfrak m_\rho(\sigma+i\tau)\to0,
\qquad |\tau|\to\infty.
\]
Hence, for all sufficiently large $|\tau|$,
\[
|\mathfrak m_\rho(\sigma+i\tau)-\lambda|\ge\frac\lambda2,
\]
uniformly in $\sigma$. Also,
\[
\prod_{j=0}^{N-1}|\sigma+\rho+j+i\tau|\gtrsim(1+|\tau|)^N
\]
for large $|\tau|$, uniformly on the same interval. Therefore \eqref{eq:5} gives
\[
|\widetilde F(\sigma+i\tau)|
\lesssim(1+|\tau|)^{-N}|\Psi_N(\sigma+i\tau)|
\]
for large $|\tau|$. On a compact rectangle in the open strip, $\widetilde F$ is bounded. Moreover, for every closed subinterval of the analytic strip of $\Psi_N$,
\[
\int_{\mathbb R}\left|\Psi_N\left(\frac12+s+i\tau\right)\right|^2\dd\tau
=2\pi\int_0^\infty|q_N(x)|^2x^{2s}\dd x
\]
is uniformly bounded in $s$. Consequently,
\[
\sup_{\sigma_1\le\sigma\le\sigma_2}
\int_{\mathbb R}|\widetilde F(\sigma+i\tau)|^2\,\dd\tau<\infty.
\]
Applying Lemma~\ref{lem:embedded-mellin-strip} to $Q=\widetilde F$ yields
\[
x^{-d}f\in L^2(\mathbb R_+),\qquad0\le d<N.
\]
In particular,
\[
x^{-N+1/2}f\in L^2(\mathbb R_+).
\]
Since $\alpha>1$, $1/2<\alpha/2$. Using \eqref{eq:3} with $\delta=1/2$ gives
\[
x^{-1/2}P_\rho^N R_\alpha f\in L^2(\mathbb R_+).
\]
Because $q_N=x^NP_\rho^N R_\alpha f$,
\[
x^{-N-1/2}q_N\in L^2(\mathbb R_+).
\]
Together with $x^{1/2}q_N\in L^2$, this gives
\[
\Psi_N\text{ analytic in }-N<\Re z<1.
\]
The polynomial in \eqref{eq:5} cancels the first $N$ poles of $\Gamma(\rho+z)$; the next pole $-\rho-N$ lies strictly to the left of $-N$, while the first pole of $\Gamma(1+\rho-z)$ is $1+\rho>1$. Thus \eqref{eq:5} defines $\widetilde F$ throughout this larger strip.

If there is a solution of \eqref{eq:6} on $\Re z=1/2-N$ at which $\widetilde F$ has a non-removable pole, we stop. Otherwise the finiteness result allows us to choose $\eta_->0$ and $\eta>0$ such that $\widetilde F$ is analytic in
\[
\frac12-N-\eta_-<\Re z<\frac12+\eta.
\]
For every closed substrip of this region, the same large-$|\tau|$ estimates above hold. In addition,
\[
\int_{\mathbb R}\left|\Psi_N\left(\frac12+s+i\tau\right)\right|^2\dd\tau
=2\pi\int_0^\infty|q_N(x)|^2x^{2s}\dd x
\]
is uniformly bounded on closed subintervals of $-N-1/2<s<1/2$. Hence $\widetilde F$ has uniformly bounded $L^2(\dd\tau)$ norms on closed substrips. Applying Lemma~\ref{lem:embedded-mellin-strip} at $\Re z=1/2-N$ gives
\[
x^{-N}f\in L^2(\mathbb R_+).
\]
This is exactly the hypothesis with $N$ replaced by $N+1$.

Starting from $N=1$, one of the following two statements must therefore hold:
\begin{equation}\label{eq:7}
\begin{aligned}
&\text{(i)}\quad x^{-N}f\in L^2(\mathbb R_+)\text{ for every integer }N\ge0;\\
&\text{(ii)}\quad \widetilde F\text{ has a non-removable pole at some solution of \eqref{eq:6} with }\Re z<\frac12.
\end{aligned}
\end{equation}

\medskip
\noindent\textit{Exclusion of alternative (i).}
Assume
\[
x^{-N}f\in L^2(\mathbb R_+)\qquad\text{for every integer }N\ge0,
\]
and put
\[
g(x)=x^{-\rho}f(x).
\]
Then \eqref{eq:1} becomes
\begin{equation}\label{eq:8}
\lambda g(x)=\int_0^\infty \phi_\alpha(x+y)g(y)y^{\alpha-1}\dd y.
\end{equation}
For every integer $k\ge0$, choose an integer $N$ satisfying
\[
N>\rho+k+\frac12.
\]
Then
\[
\begin{aligned}
\int_0^1|g(y)|y^{-k-1}\dd y
&=\int_0^1|y^{-N}f(y)|y^{N-\rho-k-1}\dd y\\
&\le\|y^{-N}f\|_{L^2(0,1)}
\left(\int_0^1y^{2N-2\rho-2k-2}\dd y\right)^{1/2}<\infty.
\end{aligned}
\]
By \eqref{eq:2}, $g(y)=O(2^{-y})$ as $y\to\infty$, and therefore
\begin{equation}\label{eq:9}
\int_0^\infty|g(y)|y^{-k-1}\dd y<\infty,
\qquad k=0,1,2,\ldots.
\end{equation}
For every $k\ge0$,
\[
|\phi_\alpha^{(k)}(t)|\lesssim_k t^{-\alpha-k},\qquad t>0.
\]
Near $0$ this follows from the Laurent expansion of $\phi_\alpha$; for $t\ge1$ its derivatives decay exponentially and hence satisfy the displayed estimate after increasing the constant. For $x\ge0$ and $0<y\le1$,
\[
|\phi_\alpha^{(k)}(x+y)|y^{\alpha-1}|g(y)|
\lesssim_k(x+y)^{-\alpha-k}y^{\alpha-1}|g(y)|
\le C_k|g(y)|y^{-k-1},
\]
and the right-hand side is integrable by \eqref{eq:9}; for $y\ge1$, the exponential decay of $g$ gives an integrable majorant. Hence differentiation under the integral in \eqref{eq:8} is valid for every $k$ and every $x\ge0$:
\[
\lambda g^{(k)}(x)=\int_0^\infty \phi_\alpha^{(k)}(x+y)g(y)y^{\alpha-1}\dd y.
\]
Thus $g\in C^\infty[0,\infty)$. If some derivative of $g$ at $0$ were non-zero, let $k$ be the smallest integer with $g^{(k)}(0)\ne0$. Taylor's formula would then give $|g(x)|\ge cx^k$ for all sufficiently small $x>0$. Therefore
\[
|x^{-N}f(x)|=x^{\rho-N}|g(x)|\ge cx^{\rho+k-N}.
\]
For an integer $N>\rho+k+1/2$, the right-hand side is not square-integrable at $0$, contradicting alternative (i). Hence
\begin{equation}\label{eq:10}
g^{(k)}(0)=0,\qquad k=0,1,2,\ldots.
\end{equation}

We next derive a quantitative estimate from \eqref{eq:10}. On $\Re z>0$, $\zeta(1+z)$ has no zeros; we use the holomorphic branch of $\zeta(1+z)^\alpha$ determined by the analytic logarithm which agrees with the real logarithm on $(0,\infty)$. If $\alpha\notin\N$, put
\[
\beta=\lfloor\alpha\rfloor+1-\alpha\in(0,1).
\]
The expansion of $(z\zeta(1+z))^\alpha$ at $z=0$ gives
\[
\phi_\alpha(z)=\sum_{j=0}^{\lfloor\alpha\rfloor}a_jz^{j-\alpha}+R(z),
\qquad a_0=1,
\]
and, near $z=0$,
\[
R(z)=-1+z^\beta B(z),
\]
where $B$ is holomorphic near $0$. If $\alpha\in\N$, then $\alpha\ge2$ and
\[
\phi_\alpha(z)=\sum_{j=0}^{\alpha-1}a_jz^{j-\alpha}+R(z),
\qquad a_0=1,
\]
where $R$ is holomorphic near $0$.

For every index $j$ occurring in these displays, define
\[
(T_jg)(x)=\int_0^\infty(x+y)^{j-\alpha}g(y)y^{\alpha-1}\dd y,
\]
and define
\[
(T_Rg)(x)=\int_0^\infty R(x+y)g(y)y^{\alpha-1}\dd y.
\]
Both integrals are absolutely convergent for every $x\ge0$. Indeed, every relevant $j$ satisfies $0\le j<\alpha$. For $0<y\le1$,
\[
(x+y)^{j-\alpha}|g(y)|y^{\alpha-1}
\le |g(y)|y^{j-1}\le|g(y)|y^{-1},
\]
which is integrable by \eqref{eq:9}. For $y\ge1$,
\[
(x+y)^{j-\alpha}|g(y)|y^{\alpha-1}\le|g(y)|y^{j-1},
\]
which is integrable because $g$ decays exponentially. The function $R(t)$ is bounded on $(0,\infty)$: this follows from its local form near $0$, and from the decay of $\phi_\alpha(t)$ and of every $t^{j-\alpha}$ occurring in the expansion as $t\to\infty$. Hence
\[
|R(x+y)g(y)y^{\alpha-1}|\lesssim|g(y)|y^{\alpha-1},
\]
and this is integrable by \eqref{eq:9} near $0$ and by exponential decay at infinity.

For $n\ge0$, define
\[
J_nu(x)=u(x)-\sum_{k=0}^n\frac{u^{(k)}(0)}{k!}x^k.
\]
By \eqref{eq:10}, $J_ng=g$. Moreover, \eqref{eq:9} and the exponential decay of $g$ imply that
\[
\mathfrak M g(w)=\int_0^\infty g(y)y^{w-1}\dd y
\]
is absolutely convergent for every $w\in\mathbb C$: near $0$ choose an integer $k\ge-\Re w$ and use \eqref{eq:9}, while at infinity use the exponential decay.

For every relevant $j$, differentiation under the integral is justified by \eqref{eq:9} and the exponential decay of $g$. Thus
\[
(T_jg)^{(n+1)}(t)
=(-1)^{n+1}\frac{\Gamma(\alpha-j+n+1)}{\Gamma(\alpha-j)}
\int_0^\infty\frac{g(y)y^{\alpha-1}}{(t+y)^{\alpha-j+n+1}}\dd y.
\]
Taylor's integral formula gives
\[
J_n(T_jg)(x)=\frac1{n!}\int_0^x(x-t)^n(T_jg)^{(n+1)}(t)\dd t.
\]
As $x\downarrow0$, $J_n(T_jg)(x)=O(x^{n+1})$. As $x\to\infty$, $T_jg(x)=O(x^{j-\alpha})$, while the polynomial subtracted in $J_n$ has degree at most $n$; hence $J_n(T_jg)(x)=O(x^n)$. Therefore $\mathfrak M(J_nT_jg)(s)$ is absolutely convergent for
\[
-n-1<\Re s<-n.
\]
For such $s$, Fubini's theorem gives
\[
\mathfrak M(J_nT_jg)(s)
=\frac1{n!}\int_0^\infty (T_jg)^{(n+1)}(t)
\left(\int_t^\infty x^{s-1}(x-t)^n\dd x\right)dt.
\]
With $x=t/u$,
\[
\int_t^\infty x^{s-1}(x-t)^n\dd x
=t^{s+n}\frac{\Gamma(-s-n)\Gamma(n+1)}{\Gamma(1-s)}.
\]
Substitution of the formula for $(T_jg)^{(n+1)}$ gives
\[
\begin{aligned}
\mathfrak M(J_nT_jg)(s)
&=(-1)^{n+1}\frac{\Gamma(-s-n)}{\Gamma(1-s)}
\frac{\Gamma(\alpha-j+n+1)}{\Gamma(\alpha-j)}\\
&\quad\times
\int_0^\infty g(y)y^{\alpha-1}
\left(\int_0^\infty\frac{t^{s+n}}{(t+y)^{\alpha-j+n+1}}\dd t\right)dy.
\end{aligned}
\]
With $t=yu$,
\[
\int_0^\infty\frac{t^{s+n}}{(t+y)^{\alpha-j+n+1}}\dd t
=y^{s+j-\alpha}
\frac{\Gamma(s+n+1)\Gamma(\alpha-j-s)}{\Gamma(\alpha-j+n+1)}.
\]
Here $\Re(s+n+1)>0$ and $\Re(\alpha-j-s)>0$. Since
\[
\Gamma(-s-n)\Gamma(s+n+1)=\frac{(-1)^{n+1}\pi}{\sin\pi s},
\qquad
\Gamma(s)\Gamma(1-s)=\frac\pi{\sin\pi s},
\]
we obtain
\begin{equation}\label{eq:11}
\mathfrak M(J_nT_jg)(s)
=\frac{\Gamma(s)\Gamma(\alpha-j-s)}{\Gamma(\alpha-j)}\mathfrak M g(s+j),
\qquad -n-1<\Re s<-n.
\end{equation}
Applying $J_n$ to \eqref{eq:8}, using $J_ng=g$, and then \eqref{eq:11}, we obtain, if $\alpha\notin\N$,
\begin{equation}\label{eq:12}
\left(\lambda-\frac{\Gamma(s)\Gamma(\alpha-s)}{\Gamma(\alpha)}\right)\mathfrak M g(s)
=
\sum_{j=1}^{\lfloor\alpha\rfloor}
a_j\frac{\Gamma(s)\Gamma(\alpha-j-s)}{\Gamma(\alpha-j)}\mathfrak M g(s+j)
+\mathfrak M(J_nT_Rg)(s),
\end{equation}
and, if $\alpha\in\N$,
\begin{equation}\label{eq:13}
\left(\lambda-\frac{\Gamma(s)\Gamma(\alpha-s)}{\Gamma(\alpha)}\right)\mathfrak M g(s)
=
\sum_{j=1}^{\alpha-1}
a_j\frac{\Gamma(s)\Gamma(\alpha-j-s)}{\Gamma(\alpha-j)}\mathfrak M g(s+j)
+\mathfrak M(J_nT_Rg)(s),
\end{equation}
where $-n-1<\Re s<-n$.

We first estimate $\mathfrak M(J_nT_Rg)$. Assume $\alpha\notin\N$ and keep
\[
\beta=\lfloor\alpha\rfloor+1-\alpha\in(0,1).
\]
Choose $r>0$ so small that $B$ is holomorphic in $|w|<4r$. Choose $\eta>0$ such that
\[
\eta<\frac{r}{2(1+r)},
\qquad
(\alpha-1)\arctan\eta<\frac\pi8.
\]

First let $t\ge r$. For $w$ on the circle $|w-t|=\eta(1+t)$,
\[
\Re w\ge t-\eta(1+t).
\]
Since $t\ge r$,
\[
\eta(1+t)\le\eta\frac{1+r}{r}t<\frac t2,
\]
and hence
\[
\Re w>\frac t2\ge\frac r2.
\]
Thus all these circles lie in the fixed half-plane $\Re w\ge r/2$. In that half-plane,
\[
|\zeta(1+w)|\le\zeta\left(1+\frac r2\right),
\]
and hence $|\phi_\alpha(w)|\lesssim1$. For each $j$ occurring in the local expansion, $j-\alpha<0$ and $|w|\ge r/2$, so $|w^{j-\alpha}|\lesssim1$. Consequently $|R(w)|\lesssim1$ on all these circles. Cauchy's integral formula gives
\[
R^{(n+1)}(t)
=\frac{(n+1)!}{2\pi i}
\int_{|w-t|=\eta(1+t)}\frac{R(w)}{(w-t)^{n+2}}\dd w,
\]
and therefore
\[
\begin{aligned}
|R^{(n+1)}(t)|
&\le\frac{(n+1)!}{2\pi}
\int_{|w-t|=\eta(1+t)}
\frac{|R(w)|}{|w-t|^{n+2}}|dw|\\
&\lesssim (n+1)![\eta(1+t)]^{-n-1}.
\end{aligned}
\]
Since $n\ge1$,
\[
|R^{(n+1)}(t)|\lesssim(n+1)!\eta^{-n-1}(1+t)^{-2},
\qquad t\ge r.
\]

Now let $0<t<r$. From the local expansion,
\[
R(t)=-1+t^\beta B(t).
\]
Since $B$ is holomorphic in $|w|<4r$,
\[
\sup_{|w|\le3r}|B(w)|<\infty.
\]
For fixed $0<t<r$, the circle $|w-t|=2r$ lies in $|w|<3r$. Cauchy's integral formula therefore gives
\[
|B^{(k)}(t)|\lesssim\frac{k!}{(2r)^k},\qquad k\ge0.
\]
For $m\ge1$,
\[
\frac{\mathrm d^m}{\mathrm d t^m}t^\beta
=\frac{\Gamma(\beta+1)}{\Gamma(\beta+1-m)}t^{\beta-m}.
\]
By the Gamma reflection formula,
\[
|\Gamma(\beta+1-m)^{-1}|
=\frac{\sin(\pi\beta)}\pi\Gamma(m-\beta),
\]
and the Gamma ratio estimate gives
\[
\frac{\Gamma(m-\beta)}{\Gamma(m+1)}\lesssim m^{-\beta-1}.
\]
Hence
\[
\left|\frac{\mathrm d^m}{\mathrm d t^m}t^\beta\right|
\lesssim m!m^{-\beta-1}t^{\beta-m}.
\]
Leibniz' formula gives
\[
R^{(n+1)}(t)
=\sum_{m=0}^{n+1}\binom{n+1}{m}
\frac{\mathrm d^m}{\mathrm d t^m}t^\beta\,B^{(n+1-m)}(t).
\]
For $m=0$,
\[
t^\beta|B^{(n+1)}(t)|
\lesssim(n+1)!t^{\beta-n-1}\left(\frac t{2r}\right)^{n+1}.
\]
For $1\le m\le n+1$,
\[
\binom{n+1}{m}
\left|\frac{\mathrm d^m}{\mathrm d t^m}t^\beta\right|
|B^{(n+1-m)}(t)|
\lesssim
(n+1)!t^{\beta-n-1}m^{-\beta-1}
\left(\frac t{2r}\right)^{n+1-m}.
\]
Since $0<t/(2r)<1/2$, reindexing by $k=n+1-m$ and using
\[
\frac{n+1}{n+1-k}\le k+1,
\qquad0\le k\le n,
\]
we obtain
\[
\begin{aligned}
\sum_{m=1}^{n+1}m^{-\beta-1}
\left(\frac t{2r}\right)^{n+1-m}
&=
\sum_{k=0}^n(n+1-k)^{-\beta-1}
\left(\frac t{2r}\right)^k\\
&\le(n+1)^{-\beta-1}
\sum_{k=0}^\infty(k+1)^{\beta+1}2^{-k}
\lesssim n^{-\beta-1}.
\end{aligned}
\]
Moreover, $2^{-(n+1)}\lesssim n^{-\beta-1}$, so the $m=0$ contribution satisfies the same bound. Therefore
\[
|R^{(n+1)}(t)|
\lesssim(n+1)!n^{-\beta-1}t^{\beta-n-1},
\qquad0<t<r.
\]
Choose a fixed $C_*>1$ with $C_*\ge\eta^{-1}$. Combining the estimates for $0<t<r$ and $t\ge r$ gives
\begin{equation}\label{eq:14}
|R^{(n+1)}(t)|
\lesssim(n+1)!\left[
 n^{-\beta-1}t^{\beta-n-1}\ind_{(0,r)}(t)
+C_*^{n+1}(1+t)^{-2}
\right].
\end{equation}

For each fixed $n$, \eqref{eq:14}, \eqref{eq:9}, and the exponential decay of $g$ justify differentiation under $T_R$ up to order $n+1$. Hence
\[
(T_Rg)^{(n+1)}(x)
=\int_0^\infty R^{(n+1)}(x+y)g(y)y^{\alpha-1}\dd y.
\]
Substituting \eqref{eq:14} and using
\[
y^{\alpha-1}|g(y)|=y^{n-\beta}|y^{-n+\lfloor\alpha\rfloor}g(y)|,
\]
we obtain
\[
\begin{aligned}
|(T_Rg)^{(n+1)}(x)|
&\lesssim(n+1)!n^{-\beta-1}
\int_0^\infty\ind_{\{x+y<r\}}
\frac{y^{n-\beta}}{(x+y)^{n+1-\beta}}
|y^{-n+\lfloor\alpha\rfloor}g(y)|\dd y\\
&\quad +(n+1)!C_*^{n+1}
\int_0^\infty\frac{|g(y)|y^{\alpha-1}}{(1+x+y)^2}\dd y.
\end{aligned}
\]
For every $x>0$,
\[
\begin{aligned}
&\int_0^\infty\ind_{\{x+y<r\}}
\frac{y^{n-\beta}}{(x+y)^{n+1-\beta}}
|y^{-n+\lfloor\alpha\rfloor}g(y)|\dd y\\
&\qquad\le
\int_0^\infty
\frac{y^{n-\beta}}{(x+y)^{n+1-\beta}}
|y^{-n+\lfloor\alpha\rfloor}g(y)|\dd y.
\end{aligned}
\]
For the kernel
\[
K_n(x,y)=\frac{y^{n-\beta}}{(x+y)^{n+1-\beta}},
\]
Schur's test with the weight $x^{-1/2}$ gives
\[
x^{1/2}\int_0^\infty
\frac{y^{n-\beta-1/2}}{(x+y)^{n+1-\beta}}\dd y
=B\left(n+\frac12-\beta,\frac12\right),
\]
and
\[
y^{1/2}\int_0^\infty
\frac{y^{n-\beta}x^{-1/2}}{(x+y)^{n+1-\beta}}\dd x
=B\left(\frac12,n+\frac12-\beta\right).
\]
Both are $O(n^{-1/2})$. For the second integral, put
\[
H(x)=\int_0^\infty\frac{|g(y)|y^{\alpha-1}}{(1+x+y)^2}\dd y.
\]
Equation \eqref{eq:9} with $k=0$ and the exponential decay of $g$ give
\[
\int_0^\infty|g(y)|y^{\alpha-1}\dd y<\infty.
\]
Since $1+x+y\ge1+x$,
\[
H(x)\lesssim(1+x)^{-2},
\]
and therefore $H\in L^2(\mathbb R_+)$. Taking $L^2$ norms gives
\begin{equation}\label{eq:15}
\|(T_Rg)^{(n+1)}\|_2
\lesssim(n+1)!\left(
 n^{-\beta-3/2}\|x^{-n+\lfloor\alpha\rfloor}g\|_2+C_*^{n+1}
\right).
\end{equation}
Taylor's formula gives
\[
J_n(T_Rg)(x)=\frac1{n!}\int_0^x(x-t)^n(T_Rg)^{(n+1)}(t)\dd t.
\]
Thus
\[
x^{-n-1}J_n(T_Rg)(x)
=\frac1{n!}\int_0^\infty
x^{-n-1}(x-t)^n\ind_{\{0<t<x\}}(T_Rg)^{(n+1)}(t)\dd t.
\]
For this kernel, the Schur weight $x^{-1/2}$ gives
\[
x^{1/2}\frac1{n!}\int_0^x x^{-n-1}(x-t)^nt^{-1/2}\dd t
=\frac1{n!}B\left(\frac12,n+1\right),
\]
and
\[
t^{1/2}\frac1{n!}\int_t^\infty x^{-n-3/2}(x-t)^n\dd x
=\frac1{n!}B\left(\frac12,n+1\right).
\]
Therefore
\[
\|x^{-n-1}J_n(T_Rg)\|_2
\le\frac{B(1/2,n+1)}{n!}\|(T_Rg)^{(n+1)}\|_2.
\]
Since $B(1/2,n+1)=O(n^{-1/2})$, \eqref{eq:15} gives
\[
\|x^{-n-1}J_nT_Rg\|_2
\lesssim n^{-\beta-1}\|x^{-n+\lfloor\alpha\rfloor}g\|_2+n^{1/2}C_*^{n+1}.
\]
Choose a fixed $C_1>C_*$. Then $n^{1/2}C_*^{n+1}\lesssim C_1^{n+1}$, and hence
\begin{equation}\label{eq:16}
\|x^{-n-1}J_nT_Rg\|_2
\lesssim n^{-\beta-1}\|x^{-n+\lfloor\alpha\rfloor}g\|_2+C_1^{n+1},
\qquad\alpha\notin\N.
\end{equation}

If $\alpha\in\N$, the function $R$ is holomorphic near $0$. Choose $r>0$ so that it is holomorphic in $|w|<4r$. For $0<t<r$, Cauchy's integral formula on $|w-t|=2r$ gives
\[
R^{(n+1)}(t)=\frac{(n+1)!}{2\pi i}
\int_{|w-t|=2r}\frac{R(w)}{(w-t)^{n+2}}\dd w,
\]
and therefore
\[
|R^{(n+1)}(t)|\lesssim(n+1)!C_0^{n+1}
\]
for a fixed $C_0>1$. For $t\ge r$, the same circles $|w-t|=\eta(1+t)$ used above give
\[
|R^{(n+1)}(t)|\lesssim(n+1)!\eta^{-n-1}(1+t)^{-2}.
\]
Choose a fixed $C_*>1$ with $C_*\ge C_0$ and $C_*\ge\eta^{-1}$. The preceding argument then gives
\[
\|(T_Rg)^{(n+1)}\|_2\lesssim(n+1)!C_*^{n+1},
\]
and the Taylor--Schur estimate gives
\[
\|x^{-n-1}J_nT_Rg\|_2\lesssim n^{1/2}C_*^{n+1}.
\]
After increasing the fixed $C_1>C_*$ if necessary,
\begin{equation}\label{eq:17}
\|x^{-n-1}J_nT_Rg\|_2\lesssim C_1^{n+1},
\qquad\alpha\in\N.
\end{equation}

We now return to \eqref{eq:12} and \eqref{eq:13}. Let $n$ tend to infinity through the even integers and set
\[
s=-n-\frac12+i\tau,
\qquad \tau\in\mathbb R.
\]
By the Gamma reflection formula,
\[
\frac{\Gamma(s)\Gamma(\alpha-s)}{\Gamma(\alpha)}
=\frac\pi{\Gamma(\alpha)\sin\pi s}\frac{\Gamma(\alpha-s)}{\Gamma(1-s)}.
\]
For even $n$,
\[
\sin\pi s=-\cosh(\pi\tau).
\]
First assume $|\tau|\le\eta n$. Then
\[
-s=n+\frac12-i\tau,
\]
so
\[
|-s|\ge n+\frac12,
\qquad
|\arg(-s)|=\arctan\frac{|\tau|}{n+1/2}\le\arctan\eta.
\]
Thus $-s$ remains in a fixed closed sector about the positive real axis. The standard Gamma ratio estimate is uniform in this sector and gives
\[
\frac{\Gamma(\alpha-s)}{\Gamma(1-s)}
=(-s)^{\alpha-1}(1+O(n^{-1})),
\qquad |\tau|\le\eta n,
\]
where the constant in $O(n^{-1})$ is independent of $\tau$ throughout the indicated interval. By the choice of $\eta$,
\[
|\arg((-s)^{\alpha-1})|
\le(\alpha-1)\arctan\eta<\frac\pi8.
\]
For all sufficiently large $n$, the factor $1+O(n^{-1})$ changes the argument by less than $\pi/8$. Hence
\[
\Re\frac{\Gamma(\alpha-s)}{\Gamma(1-s)}>0
\]
for every $|\tau|\le\eta n$. Since $(\sin\pi s)^{-1}$ is a negative real number,
\[
\Re\frac{\Gamma(s)\Gamma(\alpha-s)}{\Gamma(\alpha)}<0
\]
throughout $|\tau|\le\eta n$ for all sufficiently large even $n$. Since $\lambda>0$,
\[
\left|\frac{\Gamma(s)\Gamma(\alpha-s)}{\Gamma(\alpha)}-\lambda\right|
\ge\lambda,
\qquad |\tau|\le\eta n.
\]

Now assume $|\tau|\ge\eta n$. Uniformly in this region,
\[
\left|\frac{\Gamma(\alpha-s)}{\Gamma(1-s)}\right|
\lesssim(n+|\tau|)^{\alpha-1},
\]
while
\[
|\sin\pi s|^{-1}=\cosh(\pi\tau)^{-1}\le2e^{-\pi|\tau|}.
\]
Hence
\[
\left|\frac{\Gamma(s)\Gamma(\alpha-s)}{\Gamma(\alpha)}\right|
\lesssim(n+|\tau|)^{\alpha-1}e^{-\pi|\tau|}.
\]
For $t\ge\eta n$,
\[
\frac{\mathrm d}{\mathrm d t}\log\left((n+t)^{\alpha-1}e^{-\pi t}\right)
=\frac{\alpha-1}{n+t}-\pi<0
\]
when $n$ is sufficiently large. Thus the maximum for $t\ge\eta n$ occurs at $t=\eta n$, and
\[
\sup_{|\tau|\ge\eta n}
\left|\frac{\Gamma(s)\Gamma(\alpha-s)}{\Gamma(\alpha)}\right|
\lesssim((1+\eta)n)^{\alpha-1}e^{-\pi\eta n}\to0.
\]
Consequently, for all sufficiently large even $n$,
\[
\left|\frac{\Gamma(s)\Gamma(\alpha-s)}{\Gamma(\alpha)}-\lambda\right|
\ge\frac\lambda2,
\qquad |\tau|\ge\eta n.
\]
Combining the two regions,
\begin{equation}\label{eq:18}
\inf_{\tau\in\mathbb R}
\left|\frac{\Gamma(s)\Gamma(\alpha-s)}{\Gamma(\alpha)}-\lambda\right|
\ge\frac\lambda2
\end{equation}
for all sufficiently large even $n$.

If $\alpha\notin\N$, let $1\le j\le\lfloor\alpha\rfloor$. If $\alpha\in\N$, let $1\le j\le\alpha-1$. For each such fixed $j$,
\[
\frac{a_j\Gamma(s)\Gamma(\alpha-j-s)/\Gamma(\alpha-j)}
{\Gamma(s)\Gamma(\alpha-s)/\Gamma(\alpha)}
=
\frac{a_j\Gamma(\alpha)}{\Gamma(\alpha-j)}
\prod_{\ell=1}^j\frac1{\alpha-s-\ell}.
\]
Since
\[
\alpha-s-\ell=n+\alpha+\frac12-\ell-i\tau,
\]
for each fixed relevant $j$ and all sufficiently large $n$,
\[
|\alpha-s-\ell|\ge\frac n2,
\qquad1\le\ell\le j,
\qquad \tau\in\mathbb R.
\]
Hence
\[
\sup_{\tau\in\mathbb R}
\left|
\frac{a_j\Gamma(s)\Gamma(\alpha-j-s)/\Gamma(\alpha-j)}
{\Gamma(s)\Gamma(\alpha-s)/\Gamma(\alpha)}
\right|
\lesssim_j n^{-j}.
\]
If
\[
q=\frac{\Gamma(s)\Gamma(\alpha-s)}{\Gamma(\alpha)},
\]
then \eqref{eq:18} gives
\[
\left|\frac{q}{q-\lambda}\right|
=\left|1+\frac\lambda{q-\lambda}\right|\le3.
\]
Therefore
\[
\sup_{\tau\in\mathbb R}
\left|
\frac{a_j\Gamma(s)\Gamma(\alpha-j-s)/\Gamma(\alpha-j)}
{\Gamma(s)\Gamma(\alpha-s)/\Gamma(\alpha)-\lambda}
\right|
\lesssim_j n^{-j}.
\]

On the line $s=-n-1/2+i\tau$,
\[
\int_{\mathbb R}|\mathfrak M g(-n-1/2+i\tau)|^2\,\dd\tau
=2\pi\|x^{-n-1}g\|_2^2,
\]
for every fixed relevant $j$,
\[
\int_{\mathbb R}|\mathfrak M g(-n-1/2+j+i\tau)|^2\,\dd\tau
=2\pi\|x^{-n+j-1}g\|_2^2,
\]
and
\[
\int_{\mathbb R}|\mathfrak M(J_nT_Rg)(-n-1/2+i\tau)|^2\,\dd\tau
=2\pi\|x^{-n-1}J_nT_Rg\|_2^2.
\]
Taking $L^2(\dd\tau)$ norms in \eqref{eq:12}, dividing by the factor controlled in \eqref{eq:18}, and using \eqref{eq:16}, we obtain, for all sufficiently large even $n$,
\begin{equation}\label{eq:19}
\|x^{-n-1}g\|_2
\lesssim
\sum_{j=1}^{\lfloor\alpha\rfloor}n^{-j}\|x^{-n+j-1}g\|_2
+n^{-\beta-1}\|x^{-n+\lfloor\alpha\rfloor}g\|_2
+C_1^{n+1}.
\end{equation}
If $\alpha\in\N$, applying the same argument to \eqref{eq:13} and \eqref{eq:17} gives
\begin{equation}\label{eq:20}
\|x^{-n-1}g\|_2
\lesssim
\sum_{j=1}^{\alpha-1}n^{-j}\|x^{-n+j-1}g\|_2
+C_1^{n+1}.
\end{equation}

Choose $c>0$ so small that $cC_1<1$, and then choose
\[
cC_1<\theta<1.
\]
For every fixed integer $j\ge1$ and all sufficiently large even integers $n$, so that $n-j+1\ge0$, we claim
\begin{equation}\label{eq:21}
c^{n-j+1}\|x^{-n+j-1}g\|_2
\le c^{n+1}\|x^{-n-1}g\|_2+\|g\|_2.
\end{equation}
Indeed,
\[
\begin{aligned}
c^{2(n-j+1)}\|x^{-n+j-1}g\|_2^2
&=\int_0^c\left(\frac cx\right)^{2(n-j+1)}|g(x)|^2\dd x\\
&\quad+\int_c^\infty\left(\frac cx\right)^{2(n-j+1)}|g(x)|^2\dd x.
\end{aligned}
\]
On $0<x<c$, $c/x>1$ and $n-j+1\le n+1$, so
\[
\int_0^c\left(\frac cx\right)^{2(n-j+1)}|g(x)|^2\dd x
\le c^{2n+2}\|x^{-n-1}g\|_2^2.
\]
On $x\ge c$, $0<c/x\le1$ and $n-j+1\ge0$, so
\[
\int_c^\infty\left(\frac cx\right)^{2(n-j+1)}|g(x)|^2\dd x
\le\|g\|_2^2.
\]
Taking square roots and using $\sqrt{a^2+b^2}\le a+b$ proves \eqref{eq:21}.

Suppose first $\alpha\notin\N$. Multiplying \eqref{eq:19} by $c^{n+1}$ and applying \eqref{eq:21} for $1\le j\le\lfloor\alpha\rfloor$ and also for $j=\lfloor\alpha\rfloor+1$, we obtain
\[
\begin{aligned}
c^{n+1}\|x^{-n-1}g\|_2
&\lesssim
\left(\sum_{j=1}^{\lfloor\alpha\rfloor}n^{-j}+n^{-\beta-1}\right)
\left(c^{n+1}\|x^{-n-1}g\|_2+\|g\|_2\right)
+\theta^n.
\end{aligned}
\]
The displayed coefficient tends to $0$. Hence, for all sufficiently large even $n$, the actual coefficient multiplying $c^{n+1}\|x^{-n-1}g\|_2$ is at most $1/2$, and it can be absorbed into the left-hand side. Thus
\[
c^{n+1}\|x^{-n-1}g\|_2
\lesssim
\left(\sum_{j=1}^{\lfloor\alpha\rfloor}n^{-j}+n^{-\beta-1}\right)\|g\|_2+\theta^n\to0.
\]
If $\alpha\in\N$, multiplying \eqref{eq:20} by $c^{n+1}$ and applying \eqref{eq:21} for $1\le j\le\alpha-1$ gives
\[
c^{n+1}\|x^{-n-1}g\|_2
\lesssim
\left(\sum_{j=1}^{\alpha-1}n^{-j}\right)
\left(c^{n+1}\|x^{-n-1}g\|_2+\|g\|_2\right)+\theta^n.
\]
Again the coefficient of $c^{n+1}\|x^{-n-1}g\|_2$ can be absorbed for all sufficiently large even $n$, and therefore
\[
c^{n+1}\|x^{-n-1}g\|_2
\lesssim
\left(\sum_{j=1}^{\alpha-1}n^{-j}\right)\|g\|_2+\theta^n\to0.
\]
Thus, in both cases,
\begin{equation}\label{eq:22}
c^{n+1}\|x^{-n-1}g\|_2\to0
\qquad\text{as }n\to\infty\text{ through the even integers}.
\end{equation}
But
\[
c^{2n+2}\|x^{-n-1}g\|_2^2
=\int_0^\infty\left(\frac cx\right)^{2n+2}|g(x)|^2\dd x
\ge\int_0^c|g(x)|^2\dd x,
\]
because $c/x\ge1$ on $(0,c)$. The left-hand side tends to $0$ by \eqref{eq:22}. Hence
\[
\int_0^c|g(x)|^2\dd x=0.
\]
Since $g$ is continuous,
\[
g(x)=0,\qquad0<x<c.
\]

Fix $x_0>0$ and consider the disk $|z-x_0|<x_0/2$. For $z$ in this disk and $y\ge0$,
\[
\Re(z+y)>\frac{x_0}{2}>0.
\]
The function $z\mapsto \phi_\alpha(z+y)$ is holomorphic there. Moreover, it is uniformly bounded for $z$ in the disk and $y\ge0$, while
\[
\int_0^\infty|g(y)|y^{\alpha-1}\dd y<\infty.
\]
Therefore
\[
z\longmapsto\int_0^\infty \phi_\alpha(z+y)g(y)y^{\alpha-1}\dd y
\]
is holomorphic in the disk. Equation \eqref{eq:8} shows that $g$ is real analytic on $(0,\infty)$. Since $g$ vanishes on the non-empty open interval $(0,c)$, $g\equiv0$ on $(0,\infty)$. Consequently $f(x)=x^\rho g(x)\equiv0$, contradicting \eqref{eq:1}. Alternative (i) is impossible.

\medskip
\noindent\textit{Exclusion of alternative (ii).}
Assume now that alternative (ii) occurs. Write
\[
z=\frac12-x+i\eta,
\qquad x\ge0,
\]
so that \eqref{eq:6} becomes
\begin{equation}\label{eq:23}
\frac{\Gamma(\alpha/2-x+i\eta)\Gamma(\alpha/2+x-i\eta)}{\Gamma(\alpha)}=\lambda.
\end{equation}
By \eqref{eq:Mellin-model},
\[
m_\rho(\xi)=\frac{|\Gamma(\alpha/2+i\xi)|^2}{\Gamma(\alpha)}.
\]
Since
\[
\frac{\mathrm d}{\mathrm d\xi}\log m_\rho(\xi)
=-2\Im\psi\left(\frac\alpha2+i\xi\right)<0,
\qquad\xi>0,
\]
where $\psi=\Gamma'/\Gamma$, the function $m_\rho$ is continuous and strictly decreasing on $[0,\infty)$, with $m_\rho(0)=\Lambda_\alpha$ and $m_\rho(\xi)\to0$. For $0<\nu\le\Lambda_\alpha$, define the unique $\xi_\nu\ge0$ by
\[
m_\rho(\xi_\nu)=\nu.
\]

By Lemma~\ref{lem:embedded-gamma-modulus}, for $x>0$ and $y\in\mathbb R$, whenever the left-hand side is finite,
\begin{equation}\label{eq:24}
|\Gamma(\alpha/2+x+iy)\Gamma(\alpha/2-x-iy)|
>|\Gamma(\alpha/2+iy)|^2,
\qquad x>0.
\end{equation}
Applying \eqref{eq:24} to a solution of \eqref{eq:23} gives
\[
\lambda>m_\rho(\eta),
\qquad |\eta|>\xi_\lambda.
\]
In particular $\eta\ne0$.

We now prove that every solution of \eqref{eq:6} with $\Re z<1/2$ is simple. Differentiating explicitly,
\[
\begin{aligned}
&\frac{\mathrm d}{\mathrm d z}\left[
\frac{\Gamma(\rho+z)\Gamma(1+\rho-z)}{\Gamma(\alpha)}-\lambda
\right]\\
&\qquad=
\frac{\Gamma(\rho+z)\Gamma(1+\rho-z)}{\Gamma(\alpha)}
\bigl[\psi(\rho+z)-\psi(1+\rho-z)\bigr].
\end{aligned}
\]
At a solution
\[
z=\frac12-x+i\eta
\]
of \eqref{eq:23},
\[
\frac{\Gamma(\alpha/2-x+i\eta)\Gamma(\alpha/2+x-i\eta)}{\Gamma(\alpha)}=\lambda,
\]
and therefore the derivative equals
\[
\lambda\left[
\psi\left(\frac\alpha2-x+i\eta\right)
-
\psi\left(\frac\alpha2+x-i\eta\right)
\right].
\]
Since $\lambda>0$, it remains to prove that the bracket is non-zero. Assume first $\eta>0$. For any real $u$, choose an integer $M$ with $u+M>0$. The digamma recurrence formula
\[
\psi(u+i\eta)=\psi(u+M+i\eta)-\sum_{k=0}^{M-1}\frac1{u+k+i\eta}
\]
and the series representation
\[
\Im\psi(u+M+i\eta)
=\sum_{m=0}^\infty\frac{\eta}{(m+u+M)^2+\eta^2}
\]
give
\[
\Im\psi(u+i\eta)
=
\sum_{m=0}^\infty\frac{\eta}{(m+u+M)^2+\eta^2}
+
\sum_{k=0}^{M-1}\frac{\eta}{(u+k)^2+\eta^2}>0.
\]
Moreover,
\[
\psi\left(\frac\alpha2+x-i\eta\right)
=
\overline{\psi\left(\frac\alpha2+x+i\eta\right)}.
\]
Therefore
\[
\begin{aligned}
&\Im\left[
\psi\left(\frac\alpha2-x+i\eta\right)
-
\psi\left(\frac\alpha2+x-i\eta\right)
\right]\\
&\qquad=
\Im\psi\left(\frac\alpha2-x+i\eta\right)
+
\Im\psi\left(\frac\alpha2+x+i\eta\right)>0.
\end{aligned}
\]
For $\eta<0$, complex conjugation gives a strictly negative imaginary part. Hence the bracket is non-zero, and every solution of \eqref{eq:6} with $\Re z<1/2$ is simple.

The finite-depth zero count proved above and alternative (ii) imply that there is a smallest $x_*>0$ for which $\widetilde F$ has a non-removable pole on $\Re z=1/2-x_*$. Write these poles as
\begin{equation}\label{eq:25}
z_j^\pm=\frac12-x_*\pm i\eta_j,
\qquad0<\eta_1<\cdots<\eta_r.
\end{equation}
The poles occur in conjugate pairs. Since $q_N$ is real-valued,
\[
\Psi_N(\overline z)=\overline{\Psi_N(z)}
\]
throughout its strip, while
\[
\mathfrak m_\rho(\overline z)-\lambda
=
\overline{\mathfrak m_\rho(z)-\lambda}.
\]
The polynomial factor in the denominator of \eqref{eq:5} has only real zeros. Thus the non-removable poles in \eqref{eq:25} are paired, and they are simple by the preceding argument. Define
\[
A_j=\lim_{z\to z_j^+}(z-z_j^+)\widetilde F(z)\ne0.
\]
Then
\[
\lim_{z\to z_j^-}(z-z_j^-)\widetilde F(z)=\overline{A_j}.
\]
The strict Gamma inequality gives
\[
\eta_1>\xi_\lambda.
\]
Choose $0<\delta<1/2$ so that
\[
(1-2\delta)\eta_1>\xi_\lambda.
\]
Since $\nu\mapsto\xi_\nu$ is continuous on $(0,\Lambda_\alpha]$, choose fixed $\varepsilon,\gamma>0$ such that
\begin{equation}\label{eq:26}
\lambda-\varepsilon-\gamma>0,
\qquad
\xi_{\lambda-\varepsilon-\gamma}<(1-2\delta)\eta_1.
\end{equation}

We next obtain the leading behavior of $f$ at $0$. By the definition of $x_*$, every possible singularity at a solution of \eqref{eq:6} with depth $d<x_*$ is removable, so the preceding induction yields
\[
x^{-d}f\in L^2,
\qquad0\le d<x_*.
\]
If $x_*\notin\N$, set $N=\lceil x_*\rceil$. Then $N-1<x_*<N$ and $x^{-N+1}f\in L^2$, so $q_N$ and $\Psi_N$ are defined and the line $\Re z=1/2-x_*$ lies strictly inside the analytic strip of $\Psi_N$. If $x_*=N\in\N$, the preceding conclusion $x^{-d}f\in L^2$ for every $d<x_*$ gives $x^{-N+1/2}f\in L^2$; then \eqref{eq:3} with $\delta=1/2$ gives $x^{-N-1/2}q_N\in L^2$, and $\Psi_N$ is analytic for $-N<\Re z<1$. Thus in both cases there exists $\kappa>0$ such that \eqref{eq:5} defines $\widetilde F$ throughout
\[
\frac12-x_*-2\kappa<\Re z<\frac12-x_*+2\kappa.
\]
Using the finiteness of the solutions of \eqref{eq:6} in this larger strip, decrease $\kappa$ so that $z_j^\pm$ are the only non-removable poles in a neighborhood of the closed strip
\[
\sigma_-:=\frac12-x_*-\kappa
\le\Re z\le
\sigma_+:=\frac12-x_*+\kappa.
\]
Put
\[
H(z)=\widetilde F(z)-\sum_{j=1}^r\left(
\frac{A_j}{z-z_j^+}+\frac{\overline{A_j}}{z-z_j^-}
\right).
\]
Then $H$ is analytic on a neighborhood of this closed strip.

On every closed substrip in the analytic domain of $\Psi_N$,
\[
\int_{\mathbb R}\left|\Psi_N\left(\frac12+s+i\tau\right)\right|^2\dd\tau
=2\pi\int_0^\infty|q_N(x)|^2x^{2s}\dd x
\]
is uniformly bounded in $s$. Uniformly for $\sigma\in[\sigma_-,\sigma_+]$ and all sufficiently large $|\tau|$,
\[
|\mathfrak m_\rho(\sigma+i\tau)-\lambda|\ge\frac\lambda2,
\qquad
\prod_{j=0}^{N-1}|\sigma+\rho+j+i\tau|\gtrsim(1+|\tau|)^N.
\]
Therefore
\begin{equation}\label{eq:27}
|\widetilde F(\sigma+i\tau)|
\lesssim(1+|\tau|)^{-N}|\Psi_N(\sigma+i\tau)|
\end{equation}
for all sufficiently large $|\tau|$, uniformly in $\sigma$. The explicitly subtracted fractions satisfy
\[
\sup_{\sigma_-\le\sigma\le\sigma_+}
\sum_{j=1}^r\left(
\frac{|A_j|}{|\sigma+i\tau-z_j^+|}
+
\frac{|A_j|}{|\sigma+i\tau-z_j^-|}
\right)
\lesssim(1+|\tau|)^{-1}.
\]
On the remaining compact set in $\tau$, analyticity of $H$ gives a uniform bound. Hence
\[
\sup_{\sigma_-\le\sigma\le\sigma_+}
\int_{\mathbb R}|H(\sigma+i\tau)|^2\,\dd\tau<\infty.
\]
On the two boundary lines, \eqref{eq:27} and the Cauchy--Schwarz inequality give, for some $T_0>0$ and because $N\ge1$,
\[
\int_{|\tau|\ge T_0}|\widetilde F(\sigma_\pm+i\tau)|\,\dd\tau
\lesssim
\|(1+|\tau|)^{-N}\|_2\|\Psi_N(\sigma_\pm+i\,\cdot)\|_2<\infty.
\]
Since $H$ is analytic in a neighborhood of the closed strip and $\Re z_j^\pm=1/2-x_*\in(\sigma_-,\sigma_+)$, the functions
\[
H(z),
\qquad
\frac{A_j}{z-z_j^+},
\qquad
\frac{\overline{A_j}}{z-z_j^-}
\]
are continuous on each compact segment
\[
\{\sigma_\pm+i\tau:|\tau|\le T_0\}.
\]
Hence $\widetilde F$ is continuous and bounded there, so
\[
\int_{|\tau|\le T_0}|\widetilde F(\sigma_\pm+i\tau)|\,\dd\tau<\infty.
\]
Combining the two ranges,
\begin{equation}\label{eq:28}
\int_{\mathbb R}|\widetilde F(\sigma_\pm+i\tau)|\,\dd\tau<\infty.
\end{equation}
The uniform $L^2$ estimate for $H$ implies
\[
\int_{\mathbb R}\int_{\sigma_-}^{\sigma_+}|H(\sigma+i\tau)|^2\,\dd\sigma\,\dd\tau<\infty.
\]
Equivalently,
\[
\int_0^\infty\int_{\sigma_-}^{\sigma_+}
\left(|H(\sigma+iT)|^2+|H(\sigma-iT)|^2\right)\dd\sigma\dd T<\infty.
\]
There therefore exists a sequence $T_k\to\infty$ such that
\[
\int_{\sigma_-}^{\sigma_+}
\left(|H(\sigma+iT_k)|^2+|H(\sigma-iT_k)|^2\right)\dd\sigma\to0.
\]
Since $[\sigma_-,\sigma_+]$ has finite length, the Cauchy--Schwarz inequality gives
\[
\int_{\sigma_-}^{\sigma_+}|H(\sigma\pm iT_k)|\,\dd\sigma\to0.
\]
For $k$ sufficiently large,
\[
|\sigma\pm iT_k-z_j^\pm|
\ge T_k-\max_{1\le j\le r}\eta_j,
\qquad \sigma_-\le\sigma\le\sigma_+.
\]
Consequently,
\[
\int_{\sigma_-}^{\sigma_+}
\sum_{j=1}^r\left(
\frac{|A_j|}{|\sigma\pm iT_k-z_j^+|}
+
\frac{|A_j|}{|\sigma\pm iT_k-z_j^-|}
\right)\dd\sigma
\lesssim T_k^{-1}.
\]
It follows that
\[
\int_{\sigma_-}^{\sigma_+}|\widetilde F(\sigma\pm iT_k)|\,\dd\sigma\to0.
\]

On $\Re z=\sigma_+$, since
\[
\sigma_+=\frac12-(x_*-\kappa),
\]
the function $\widetilde F(\sigma_++i\tau)$ is the Mellin transform of $f$ on this line. By \eqref{eq:28}, the Mellin inversion integral is absolutely convergent, and the Mellin inversion formula gives, for almost every $x>0$,
\[
f(x)=\frac1{2\pi i}
\int_{\sigma_+-i\infty}^{\sigma_++i\infty}
\widetilde F(z)x^{-z}\dd z.
\]
For sufficiently large $k$, consider the rectangle with vertical sides $\Re z=\sigma_-$ and $\Re z=\sigma_+$ and horizontal sides $\Im z=\pm T_k$. Its only poles of $\widetilde F$ are $z_j^\pm$, and
\[
\operatorname*{Res}_{z=z_j^+}\widetilde F(z)=A_j,
\qquad
\operatorname*{Res}_{z=z_j^-}\widetilde F(z)=\overline{A_j}.
\]
The residue theorem gives
\[
\begin{aligned}
\frac1{2\pi i}
\int_{\sigma_+-iT_k}^{\sigma_++iT_k}\widetilde F(z)x^{-z}\dd z
&=
\sum_{j=1}^r\left(A_jx^{-z_j^+}+\overline{A_j}x^{-z_j^-}\right)\\
&\quad+
\frac1{2\pi i}
\int_{\sigma_--iT_k}^{\sigma_-+iT_k}\widetilde F(z)x^{-z}\dd z
+E_k(x),
\end{aligned}
\]
where $E_k(x)$ is the sum of the two horizontal integrals. For fixed $x>0$,
\[
|x^{-\sigma\mp iT_k}|=x^{-\sigma},
\]
and therefore
\[
|E_k(x)|
\lesssim_x
\int_{\sigma_-}^{\sigma_+}
\left(|\widetilde F(\sigma+iT_k)|+|\widetilde F(\sigma-iT_k)|\right)\dd\sigma\to0.
\]
By \eqref{eq:28}, the two vertical integrals converge absolutely. Letting $k\to\infty$ yields
\[
f(x)=
\sum_{j=1}^r\left(A_jx^{-z_j^+}+\overline{A_j}x^{-z_j^-}\right)
+
\frac1{2\pi i}\int_{\sigma_--i\infty}^{\sigma_-+i\infty}
\widetilde F(z)x^{-z}\dd z
\]
for almost every $x>0$.
By \eqref{eq:28}, the last integral is $O(x^{-\sigma_-})$ for $0<x<1$. Since $-\sigma_-=x_*+\kappa-1/2$,
\begin{equation}\label{eq:29}
f(x)=x^{x_*-1/2}\left[
\sum_{j=1}^r\left(A_je^{-i\eta_j\log x}+\overline{A_j}e^{i\eta_j\log x}\right)
+O(x^\kappa)
\right],
\qquad x\downarrow0.
\end{equation}
Both sides are continuous for $x>0$, so the almost-everywhere equality obtained by inversion holds for the continuous representative chosen at the beginning.

Introduce the unitary logarithmic coordinate
\[
u(t)=e^{t/2}f(e^t),
\qquad
Q(t)=e^{-x_*t}u(t),
\qquad t\in\mathbb R.
\]
Then \eqref{eq:29} becomes
\begin{equation}\label{eq:30}
Q(t)=P(t)+O(e^{\kappa t}),
\qquad t\to-\infty,
\end{equation}
where
\[
P(t)=\sum_{j=1}^r\left(A_je^{-i\eta_jt}+\overline{A_j}e^{i\eta_jt}\right).
\]
By \eqref{eq:30}, $Q$ is bounded as $t\to-\infty$, while \eqref{eq:2} gives $Q(t)\to0$ superexponentially as $t\to+\infty$. Since $Q$ is continuous,
\[
Q\in L^\infty(\mathbb R).
\]
Under this unitary change of variables, \eqref{eq:1} becomes
\[
\int_{\mathbb R}\widetilde k_\alpha(t,s)u(s)\dd s=\lambda u(t),
\]
with kernel
\[
\widetilde k_\alpha(t,s)=e^{\alpha(t+s)/2}
\left(\zeta(1+e^t+e^s)^\alpha-1\right).
\]
Since $\phi_\alpha(q)\lesssim q^{-\alpha}$ for $q>0$,
\begin{equation}\label{eq:31}
0<\widetilde k_\alpha(t,s)
\lesssim\left(2\cosh\frac{t-s}{2}\right)^{-\alpha}
\le e^{-\alpha|t-s|/2}.
\end{equation}

Fix $C_0>1$. For $L>1$, set
\[
I_L=[-L,C_0\log L].
\]
For $t\in I_L$ define
\[
r_L(t)=
-\int_{-\infty}^{-L}\widetilde k_\alpha(t,s)u(s)\dd s
-\int_{C_0\log L}^{\infty}\widetilde k_\alpha(t,s)u(s)\dd s.
\]
For $t\in I_L$, write
\[
t=-L+r,
\qquad0\le r\le L+C_0\log L.
\]
If $s\le-L$, then
\[
s\le-L\le t,
\]
so $|t-s|=t-s$. Since $u(s)=e^{x_*s}Q(s)$ and $Q$ is bounded, \eqref{eq:31} gives
\[
\begin{aligned}
\int_{-\infty}^{-L}\widetilde k_\alpha(t,s)|u(s)|\dd s
&\lesssim e^{-\alpha t/2}\int_{-\infty}^{-L}e^{(\alpha/2+x_*)s}\dd s\\
&\lesssim e^{-x_*L}e^{-\alpha r/2}.
\end{aligned}
\]
Consequently,
\[
\int_{I_L}\left|
\int_{-\infty}^{-L}\widetilde k_\alpha(t,s)u(s)\dd s
\right|^2dt
\lesssim e^{-2x_*L}\int_0^\infty e^{-\alpha r}\dd r
\lesssim e^{-2x_*L}.
\]
For the right tail, \eqref{eq:2} gives
\[
|u(s)|\lesssim e^{\alpha s/2}e^{-(\log2)e^s},
\qquad s\ge0.
\]
For $t\in I_L$ and $s\ge C_0\log L$, we have $s\ge t$, and therefore
\[
\begin{aligned}
\int_{C_0\log L}^\infty \widetilde k_\alpha(t,s)|u(s)|\dd s
&\lesssim e^{\alpha t/2}
\int_{C_0\log L}^\infty e^{-(\log2)e^s}\dd s\\
&\le L^{\alpha C_0/2}
\int_{L^{C_0}}^\infty e^{-(\log2)r}\frac{\dd r}{r}\\
&\lesssim L^{\alpha C_0/2}e^{-(\log2)L^{C_0}/2}
=o(e^{-x_*L}),
\end{aligned}
\]
uniformly in $t\in I_L$. Since $|I_L|^{1/2}\lesssim L^{1/2}$, the $L^2(I_L)$ norm of the right tail is also $o(e^{-x_*L})$. Thus
\begin{equation}\label{eq:32}
\|r_L\|_2+\|r_L\|_\infty\lesssim e^{-x_*L}.
\end{equation}

Now let $A_L$ be the integral operator on $L^2(I_L)$ with kernel $\widetilde k_\alpha$, and put $u_L=u|_{I_L}$. Then
\[
(A_L-\lambda)u_L=r_L.
\]
With the fixed $\varepsilon$ from \eqref{eq:26}, set
\[
E_L=E_{[\lambda-\varepsilon/2,\lambda+\varepsilon/2]}(A_L),
\qquad
v_L=E_Lu_L.
\]
For $g\in\operatorname{Ran}(I-E_L)$,
\[
\|(A_L-\lambda)g\|_2\ge\frac\varepsilon2\|g\|_2.
\]
Taking $g=(I-E_L)u_L$ and using \eqref{eq:32} gives
\begin{equation}\label{eq:33}
\|v_L-u_L\|_2\le\frac2\varepsilon\|r_L\|_2
\lesssim_\varepsilon e^{-x_*L}.
\end{equation}
By \eqref{eq:31}, uniformly in $L$,
\[
\|A_Lw\|_\infty\lesssim\|w\|_2.
\]
Since $\varepsilon<\lambda$, the restriction of $A_L$ to $\operatorname{Ran} E_L$ is invertible and
\[
\left\|(A_L|_{\operatorname{Ran} E_L})^{-1}\right\|_{2\to2}
\le(\lambda-\varepsilon/2)^{-1}\le2/\lambda.
\]
Therefore
\[
\|E_Lr_L\|_\infty
=\left\|A_L(A_L|_{\operatorname{Ran} E_L})^{-1}E_Lr_L\right\|_\infty
\lesssim\|r_L\|_2,
\]
and hence
\[
\|(I-E_L)r_L\|_\infty
\le\|r_L\|_\infty+\|E_Lr_L\|_\infty
\lesssim e^{-x_*L}.
\]
From
\[
\lambda(u_L-v_L)=A_L(u_L-v_L)-(I-E_L)r_L
\]
and \eqref{eq:33},
\begin{equation}\label{eq:34}
\|v_L-u_L\|_\infty\lesssim_\varepsilon e^{-x_*L}.
\end{equation}
Define on all of $\mathbb R$
\[
V_L(t)=e^{-x_*t}v_L(t)\ind_{I_L}(t).
\]
For $t\in I_L$, the definition of $Q$ and \eqref{eq:34} give
\[
|V_L(t)|
\le|Q(t)|+e^{-x_*t}|v_L(t)-u_L(t)|
\lesssim\|Q\|_\infty+e^{-x_*(L+t)}.
\]
Since $t\ge-L$ on $I_L$,
\[
\sup_{L>1}\|V_L\|_\infty<\infty.
\]
Moreover, from the definitions and \eqref{eq:34}, for
\[
-(1-\delta)L\le t\le C_0\log L
\]
we have
\begin{equation}\label{eq:35}
|V_L(t)-Q(t)|
=e^{-x_*t}|v_L(t)-u_L(t)|
\lesssim_\varepsilon e^{-\delta x_*L}.
\end{equation}

For a real function $v$, let $S(v)$ be the supremum of the integers $m$ for which there exist $t_0<\cdots<t_m$ with all $v(t_j)\ne0$ and alternating signs. For $h>0$, put
\[
G_h(t)=\frac1{\sqrt{4\pi h}}e^{-t^2/(4h)}.
\]
By Schoenberg's theorem \cite[Theorem~1]{Schoenberg1950}, applied to the Gaussian P\'olya frequency function $G_h$,
\begin{equation}\label{eq:36}
S(G_h*V)\le S(V)
\end{equation}
for every compactly supported real bounded measurable function $V$.

From the explicit trigonometric polynomial $P$,
\[
(G_h*P)(t)
=\sum_{j=1}^re^{-h\eta_j^2}
\left(A_je^{-i\eta_jt}+\overline{A_j}e^{i\eta_jt}\right).
\]
Since $0<\eta_1<\cdots<\eta_r$ and $A_1\ne0$,
\[
\sup_{t\in\mathbb R}
\left|
e^{h\eta_1^2}(G_h*P)(t)
-2|A_1|\cos(\eta_1t-\arg A_1)
\right|
\le2\sum_{j=2}^r|A_j|e^{-h(\eta_j^2-\eta_1^2)}\to0,
\qquad h\to\infty.
\]
Fix $h$ so large that the right-hand side is $<|A_1|$. Whenever
\[
\eta_1t-\arg A_1=k\pi,
\qquad k\in\mathbb Z,
\]
we then have
\[
\operatorname{sgn}(G_h*P)(t)=(-1)^k,
\qquad
|(G_h*P)(t)|\ge e^{-h\eta_1^2}|A_1|.
\]
By \eqref{eq:30} and boundedness of $Q$ and $P$,
\[
(G_h*Q)(t)-(G_h*P)(t)
=\int_{\mathbb R}G_h(y)(Q(t-y)-P(t-y))\dd y\to0,
\qquad t\to-\infty,
\]
by the dominated convergence theorem. Choose $T>0$ so that
\[
|(G_h*Q)(t)-(G_h*P)(t)|
<\frac12e^{-h\eta_1^2}|A_1|,
\qquad t\le-T.
\]
Thus, whenever $\eta_1t-\arg A_1=k\pi$ and $t\le-T$,
\[
\operatorname{sgn}(G_h*Q)(t)=(-1)^k.
\]

For all sufficiently large $L$, put
\[
J_L=[-(1-2\delta)L,-T].
\]
For $t\in J_L$,
\[
\begin{aligned}
|(G_h*(V_L-Q))(t)|
&\le\int_{-(1-\delta)L}^{C_0\log L}
G_h(t-s)|V_L(s)-Q(s)|\dd s\\
&\quad+\int_{s<-(1-\delta)L}G_h(t-s)(|V_L(s)|+|Q(s)|)\dd s\\
&\quad+\int_{s>C_0\log L}G_h(t-s)(|V_L(s)|+|Q(s)|)\dd s.
\end{aligned}
\]
By \eqref{eq:35} and the uniform bounds on $V_L$ and $Q$, the first integral is $\lesssim_\varepsilon e^{-\delta x_*L}$. On the second region, $|t-s|\ge\delta L$, and on the third region, $|t-s|\ge C_0\log L+T$. Hence, uniformly for $t\in J_L$,
\[
|(G_h*(V_L-Q))(t)|
\lesssim_\varepsilon e^{-\delta x_*L}
+\int_{|y|\ge\delta L}G_h(y)\dd y
+\int_{|y|\ge C_0\log L+T}G_h(y)\dd y.
\]
The right-hand side tends to $0$ as $L\to\infty$, and therefore
\[
\sup_{t\in J_L}|(G_h*V_L)(t)-(G_h*Q)(t)|\to0.
\]
At every point in $J_L$ satisfying $\eta_1t-\arg A_1=k\pi$,
\[
|(G_h*Q)(t)|\ge\frac12e^{-h\eta_1^2}|A_1|,
\qquad
\operatorname{sgn}(G_h*Q)(t)=(-1)^k.
\]
Hence, for all sufficiently large $L$,
\[
\sup_{t\in J_L}|(G_h*V_L)(t)-(G_h*Q)(t)|
<\frac14e^{-h\eta_1^2}|A_1|,
\]
so $G_h*V_L$ has the same alternating signs at all these consecutive points. The consecutive points $\eta_1t-\arg A_1=k\pi$ have spacing $\pi/\eta_1$, while
\[
|J_L|=(1-2\delta)L-T.
\]
Hence their number in $J_L$ is
\[
\frac{(1-2\delta)\eta_1}{\pi}L+O(1),
\qquad L\to\infty,
\]
where the $O(1)$ term is independent of $L$. Therefore
\[
S(G_h*V_L)\ge\frac{(1-2\delta)\eta_1}{\pi}L+O(1).
\]
On $I_L$, $V_L=e^{-x_*t}v_L$ and the factor is strictly positive; outside $I_L$, $V_L=0$, and zero values cannot be used in a strict sign-alternating sequence. Hence $S(V_L)=S(v_L)$. By \eqref{eq:36},
\begin{equation}\label{eq:37}
S(v_L)\ge\frac{(1-2\delta)\eta_1}{\pi}L+O(1).
\end{equation}

By \eqref{eq:strict-total-positivity-kalpha} and the identity
\[
\widetilde k_\alpha(t,s)=e^{(t+s)/2}k_\alpha(e^t,e^s),
\]
multiplication of rows and columns by positive factors gives, for
\(t_1<\cdots<t_r\) and \(s_1<\cdots<s_r\),
\begin{equation}\label{eq:strict-total-positivity-log-kernel}
\det(\widetilde k_\alpha(t_i,s_j))_{i,j=1}^r>0.
\end{equation}

Write $I_L=[a_L,b_L]$ and $\ell_L=b_L-a_L$. The unitary map from $L^2(I_L)$ to $L^2(0,1)$ transforms $A_L$ into the integral operator with kernel
\[
\widehat k_{\alpha,L}(x,y)=\ell_L\widetilde k_\alpha(a_L+\ell_Lx,a_L+\ell_Ly).
\]
For \(0<x_1<\cdots<x_r<1\) and \(0<y_1<\cdots<y_r<1\),
\[
\det(\widehat k_{\alpha,L}(x_i,y_j))_{i,j=1}^r
=\ell_L^r\det(\widetilde k_\alpha(a_L+\ell_Lx_i,a_L+\ell_Ly_j))_{i,j=1}^r>0.
\]
The preceding determinant identity, together with \eqref{eq:strict-total-positivity-log-kernel}, verifies the hypotheses of \cite[Theorem~4.1, p.~490]{Pinkus1996}. Hence the positive eigenvalues of $A_L$ are simple. Let $e_0,e_1,\ldots$ be corresponding real eigenfunctions, ordered by decreasing positive eigenvalues. If $Z(w)$ denotes the number of distinct zeros of $w$ on $I_L$, then \cite[Theorem~4.1, p.~490]{Pinkus1996} yields, for every non-zero $(b_m,\ldots,b_n)$,
\[
m\le S\left(\sum_{j=m}^nb_je_j\right)
\le Z\left(\sum_{j=m}^nb_je_j\right)
\le n.
\]
Define
\[
N_L(\nu)=\#\{\text{eigenvalues of }A_L\text{ in }(\nu,\infty)\},
\qquad\nu>0,
\]
with multiplicities. Since $\|u_L\|_{L^2(I_L)}\to\|u\|_{L^2(\mathbb R)}>0$ and \eqref{eq:33} holds, $v_L\ne0$ for all sufficiently large $L$. Furthermore,
\[
\operatorname{Ran} E_L\subset
\operatorname{span}\{\text{the first }N_L(\lambda-\varepsilon)\text{ positive eigenfunctions of }A_L\}.
\]
Taking $m=0$ in the preceding inequality gives
\begin{equation}\label{eq:39}
S(v_L)\le N_L(\lambda-\varepsilon)-1.
\end{equation}

It remains to estimate $N_L$. Let $C_L$ be the integral operator on $L^2(I_L)$ with convolution kernel
\[
\kappa_\alpha(t-s)=
\left(2\cosh\frac{t-s}{2}\right)^{-\alpha}.
\]
Its Fourier transform is
\[
\int_{\mathbb R}\kappa_\alpha(t)e^{-i\xi t}\dd t
=\frac{|\Gamma(\alpha/2+i\xi)|^2}{\Gamma(\alpha)}
=m_\rho(\xi).
\]
The kernel is continuous and positive definite on the compact interval $I_L$. By Mercer's theorem, $C_L\ge0$ is trace class and
\[
\operatorname{Tr} C_L=|I_L|\kappa_\alpha(0)<\infty.
\]
Consequently all powers $C_L^m$ are trace class. For $m\ge2$,
\[
\operatorname{Tr} C_L^m
=\int_{I_L^m}\prod_{j=1}^m\kappa_\alpha(x_j-x_{j+1})\dd x_1\cdots dx_m,
\qquad x_{m+1}=x_1.
\]
Translate $I_L$ to $[0,\ell]$, where
\[
\ell=|I_L|=L+C_0\log L.
\]
Put
\[
y_1=0,
\qquad y_j=x_j-x_1\quad(2\le j\le m),
\qquad y_{m+1}=y_1=0.
\]
The Jacobian is $1$, and
\[
x_j-x_{j+1}=y_j-y_{j+1}.
\]
For fixed $(y_2,\ldots,y_m)$, the conditions $0\le x_j=x_1+y_j\le\ell$ for every $j$ are equivalent to
\[
-y_j\le x_1\le\ell-y_j,
\qquad j=1,\ldots,m.
\]
Thus
\[
-\min_jy_j\le x_1\le\ell-\max_jy_j,
\]
and the admissible $x_1$-interval has length
\[
\left(\ell-\max_jy_j+\min_jy_j\right)_+.
\]
Therefore
\[
\operatorname{Tr} C_L^m
=\int_{\mathbb R^{m-1}}
\left(\ell-\max_jy_j+\min_jy_j\right)_+
\prod_{j=1}^m\kappa_\alpha(y_j-y_{j+1})\dd y_2\cdots dy_m.
\]
Replacing this length by $\ell$ produces
\[
\ell\int_{\mathbb R^{m-1}}
\prod_{j=1}^m\kappa_\alpha(y_j-y_{j+1})\dd y_2\cdots dy_m
=\frac\ell{2\pi}\int_{\mathbb R}m_\rho(\xi)^m\dd\xi
\]
by the Fourier inversion formula. Put
\[
R(y)=\max_jy_j-\min_jy_j\ge0.
\]
Since
\[
0\le\ell-(\ell-R(y))_+=\min\{\ell,R(y)\}\le R(y),
\]
the replacement error is bounded by
\[
\int_{\mathbb R^{m-1}}
\left(\max_jy_j-\min_jy_j\right)
\prod_{j=1}^m|\kappa_\alpha(y_j-y_{j+1})|\dd y_2\cdots dy_m.
\]
Since
\[
\max_jy_j-\min_jy_j\le\sum_{j=1}^m|y_j-y_{j+1}|
\]
and $\kappa_\alpha(t)\lesssim e^{-\alpha|t|/2}$, this integral is finite and independent of $L$. For $m=1$,
\[
\operatorname{Tr} C_L=\ell \kappa_\alpha(0)=\frac\ell{2\pi}\int_{\mathbb R}m_\rho(\xi)\dd\xi.
\]
Hence, for every fixed integer $m\ge1$,
\begin{equation}\label{eq:40}
\operatorname{Tr} C_L^m
=\frac{|I_L|}{2\pi}\int_{\mathbb R}m_\rho(\xi)^m\dd\xi+O(1),
\qquad L\to\infty,
\end{equation}
where the $O(1)$ term is independent of $L$.

If
\[
p(x)=\sum_{m=1}^Ma_mx^m,
\qquad p(0)=0,
\]
then $p(C_L)=\sum_{m=1}^Ma_mC_L^m$, and \eqref{eq:40} gives
\begin{equation}\label{eq:41}
\operatorname{Tr} p(C_L)
=\frac{|I_L|}{2\pi}\int_{\mathbb R}p(m_\rho(\xi))\dd\xi+O(1).
\end{equation}
The condition $p(0)=0$ is essential here: a non-zero constant term would introduce a multiple of the identity on the infinite-dimensional space $L^2(I_L)$, which is not trace class.

Since $C_L\ge0$ and $\|C_L\|\le\Lambda_\alpha$, its spectrum lies in $[0,\Lambda_\alpha]$. Let $\varphi\in C([0,\Lambda_\alpha])$ and suppose that $\varphi=0$ on $[0,a]$ for some $a>0$. Then $\varphi(x)/x$ for $x>0$, extended by $0$ at $x=0$, is continuous. Given $\varepsilon_0>0$, choose a polynomial $q$ such that
\[
\sup_{0\le x\le\Lambda_\alpha}
\left|\frac{\varphi(x)}x-q(x)\right|\le\varepsilon_0,
\qquad
p(x)=xq(x).
\]
Then
\[
|\varphi(x)-p(x)|\le\varepsilon_0x,
\qquad0\le x\le\Lambda_\alpha.
\]
By the continuous functional calculus,
\[
-\varepsilon_0C_L
\le\varphi(C_L)-p(C_L)
\le\varepsilon_0C_L.
\]
Therefore
\[
|\operatorname{Tr}\varphi(C_L)-\operatorname{Tr} p(C_L)|
\le\varepsilon_0\operatorname{Tr} C_L
=\varepsilon_0|I_L|\kappa_\alpha(0),
\]
while
\[
\frac1{2\pi}\int_{\mathbb R}
|\varphi(m_\rho(\xi))-p(m_\rho(\xi))|\dd\xi
\le\varepsilon_0\kappa_\alpha(0).
\]
Together with \eqref{eq:41} and $|I_L|/L\to1$, these estimates give
\[
\limsup_{L\to\infty}
\left|
\frac1L\operatorname{Tr}\varphi(C_L)
-\frac1{2\pi}\int_{\mathbb R}\varphi(m_\rho(\xi))\dd\xi
\right|
\le2\varepsilon_0\kappa_\alpha(0).
\]
Letting $\varepsilon_0\downarrow0$ gives
\begin{equation}\label{eq:42}
\frac1L\operatorname{Tr}\varphi(C_L)
\longrightarrow
\frac1{2\pi}\int_{\mathbb R}\varphi(m_\rho(\xi))\dd\xi,
\qquad L\to\infty.
\end{equation}

Fix $0<\nu<\Lambda_\alpha$. Choose $a>0$ small enough that $0<\nu-a<\nu+a<\Lambda_\alpha$, and choose continuous functions $\varphi_a^-,\varphi_a^+$ on $[0,\Lambda_\alpha]$, both zero near $0$, such that
\[
0\le\varphi_a^-(x)
\le\ind_{(\nu,\infty)}(x)
\le\varphi_a^+(x)\le1,
\qquad0\le x\le\Lambda_\alpha,
\]
with $\varphi_a^-$ equal to $0$ on $[0,\nu]$ and $1$ on $[\nu+a,\Lambda_\alpha]$, and $\varphi_a^+$ equal to $0$ on $[0,\nu-a]$ and $1$ on $[\nu,\Lambda_\alpha]$. Then
\[
\operatorname{Tr}\varphi_a^-(C_L)
\le\operatorname{Tr} E_{(\nu,\infty)}(C_L)
\le\operatorname{Tr}\varphi_a^+(C_L).
\]
Since
\[
\operatorname{Tr} E_{(\nu,\infty)}(C_L)=N((\nu,\infty);C_L),
\]
\eqref{eq:42} gives
\[
\frac1{2\pi}\int_{\mathbb R}\varphi_a^-(m_\rho(\xi))\dd\xi
\le\liminf_{L\to\infty}\frac{N((\nu,\infty);C_L)}L
\]
and
\[
\limsup_{L\to\infty}\frac{N((\nu,\infty);C_L)}L
\le\frac1{2\pi}\int_{\mathbb R}\varphi_a^+(m_\rho(\xi))\dd\xi.
\]
The level set $\{\xi:m_\rho(\xi)=\nu\}=\{-\xi_\nu,\xi_\nu\}$ has measure zero. As $a\downarrow0$, both outer integrals converge to
\[
\frac1{2\pi}|\{\xi:m_\rho(\xi)>\nu\}|.
\]
Hence
\[
\frac1L N((\nu,\infty);C_L)
\longrightarrow
\frac1{2\pi}|\{\xi:m_\rho(\xi)>\nu\}|.
\]
Because $m_\rho$ is even and strictly decreasing on $(0,\infty)$,
\[
\{\xi:m_\rho(\xi)>\nu\}=(-\xi_\nu,\xi_\nu).
\]
Hence
\begin{equation}\label{eq:43}
N((\nu,\infty);C_L)
=\frac{\xi_\nu}{\pi}L+o(L),
\qquad L\to\infty.
\end{equation}

Finally compare $A_L$ and $C_L$. The kernel of $A_L-C_L$ is
\[
e^{\alpha(t+s)/2}r_\alpha(e^t+e^s).
\]
The estimates for $r_\alpha$ with $N=0$ give
\[
|r_\alpha(q)|\lesssim q^{1-\alpha},\qquad0<q\le1,
\qquad
|r_\alpha(q)|\lesssim q^{-\alpha},\qquad q\ge1.
\]
Put
\[
\sigma=\frac{t+s}{2},
\qquad
\tau=t-s,
\qquad
e^t+e^s=2e^\sigma\cosh(\tau/2).
\]
Since $t,s\in I_L=[-L,C_0\log L]$,
\[
|\tau|\le L+C_0\log L,
\qquad
-L+\frac{|\tau|}{2}\le\sigma\le C_0\log L-\frac{|\tau|}{2}.
\]
Hence
\[
\begin{aligned}
\|A_L-C_L\|_{\mathrm{HS}}^2
&=
\int_{|\tau|\le L+C_0\log L}
\int_{-L+|\tau|/2}^{C_0\log L-|\tau|/2}
 e^{2\alpha\sigma}
\left|r_\alpha\left(2e^\sigma\cosh(\tau/2)\right)\right|^2
\,\dd\sigma\,\dd\tau.
\end{aligned}
\]
On the part where
\[
2e^\sigma\cosh(\tau/2)\le1,
\]
we have
\[
e^{2\alpha\sigma}
\left|r_\alpha\left(2e^\sigma\cosh(\tau/2)\right)\right|^2
\lesssim e^{2\sigma}\bigl(\cosh(\tau/2)\bigr)^{2-2\alpha}.
\]
Therefore
\[
\int_{-\infty}^{-\log(2\cosh(\tau/2))}
 e^{2\sigma}\bigl(\cosh(\tau/2)\bigr)^{2-2\alpha}\,\dd\sigma
\lesssim\bigl(\cosh(\tau/2)\bigr)^{-2\alpha}.
\]
On the part where
\[
2e^\sigma\cosh(\tau/2)\ge1,
\]
we have
\[
e^{2\alpha\sigma}
\left|r_\alpha\left(2e^\sigma\cosh(\tau/2)\right)\right|^2
\lesssim\bigl(\cosh(\tau/2)\bigr)^{-2\alpha}.
\]
If this part of the inner integral is non-empty, its $\sigma$-length is at most
\[
\begin{aligned}
&C_0\log L-\frac{|\tau|}{2}
+\log\left(2\cosh(\tau/2)\right)\\
&\qquad=
C_0\log L+\log(1+e^{-|\tau|})
\le C_0\log L+\log2.
\end{aligned}
\]
Consequently,
\begin{equation}\label{eq:44}
\|A_L-C_L\|_{\mathrm{HS}}^2
\lesssim(1+\log L)
\int_{\mathbb R}\bigl(\cosh(\tau/2)\bigr)^{-2\alpha}\,\dd\tau
\lesssim1+\log L=o(L).
\end{equation}

For a compact operator $D$, let $n(\gamma;D)$ be the number of singular values of $D$ larger than $\gamma$. The min--max principle gives
\[
N_L(\lambda-\varepsilon)
\le N((\lambda-\varepsilon-\gamma,\infty);C_L)
+n(\gamma;A_L-C_L).
\]
By \eqref{eq:44},
\[
n(\gamma;A_L-C_L)
\le\gamma^{-2}\|A_L-C_L\|_{\mathrm{HS}}^2=o(L).
\]
Using \eqref{eq:43} and \eqref{eq:26},
\begin{equation}\label{eq:45}
N_L(\lambda-\varepsilon)
\le\frac{\xi_{\lambda-\varepsilon-\gamma}}\pi L+o(L).
\end{equation}
Combining \eqref{eq:37}, \eqref{eq:39}, and \eqref{eq:45}, with $\delta,\varepsilon,\gamma,h,T$ fixed, gives
\[
\frac{(1-2\delta)\eta_1}{\pi}L+O(1)
\le\frac{\xi_{\lambda-\varepsilon-\gamma}}\pi L+o(L),
\qquad L\to\infty,
\]
Both alternatives in \eqref{eq:7} have now been excluded, contradicting \eqref{eq:1}. Therefore
\[
\sigma_{\mathrm p}(K_\alpha)\cap(0,\Lambda_\alpha]=\varnothing.
\]
By Theorem~\ref{thm:Ta-unitary-Ka},
\[
\sigma_{\mathrm p}(T_\alpha)\cap(0,\Lambda_\alpha]=\varnothing.
\]
The same theorem also gives
\[
\ker T_\alpha=\{0\},
\]
so \(0\) is not an eigenvalue of \(T_\alpha\). Consequently,
\[
\sigma_{\mathrm p}(T_\alpha)\cap[0,\Lambda_\alpha]=\varnothing.
\]
Thus \(T_\alpha\) has no eigenvalues in its essential spectrum.
\end{proof}

\section{Boundedness and compactness of weighted multiplicative Hankel forms}

\begin{remark}\label{rem:borderline-weighted-form}
Let \(\alpha>1\) and set
\[ \varrho_0(n)=\frac{d_\alpha(n)}{\sqrt n\,(\log n)^\alpha}, \qquad n\geq2. \]
The corresponding weighted multiplicative Hankel form on
\(\ell^2_{d_\alpha}(\N_{\geq2})\) in each variable is
\[ \Hankel_{\varrho_0,d_\alpha}(a,b)=\sum_{m,n\geq2}\frac{a_mb_n}{\sqrt{mn}\,(\log(mn))^\alpha}. \]
By part~(1) of Theorem~\ref{thm:complete-spectrum-Ta}, this form is bounded.  It is not compact, as follows from
part~(2) of the same theorem.

More generally, let \(\varrho=(\varrho_n)_{n\geq1}\), and let
\(\Hankel_{\varrho,d_\alpha}\) denote the corresponding weighted
multiplicative Hankel form.  The critical sequence \(\varrho_0\) gives the
following comparison criteria:
\begin{enumerate}
\item If \(\varrho_n=O(\varrho_0(n))\), $n\longrightarrow\infty$ , then
\(\Hankel_{\varrho,d_\alpha}\) is bounded.
\item If \(\varrho_n=o(\varrho_0(n))\), $n\longrightarrow\infty$ ,then
\(\Hankel_{\varrho,d_\alpha}\) is compact.
\item If
\[ \frac{\varrho_n}{\varrho_0(n)}\longrightarrow+\infty, \qquad n\to\infty, \]
then \(\Hankel_{\varrho,d_\alpha}\) is unbounded.
\end{enumerate}
The preceding comparison statements do not give a coefficient characterization of bounded weighted multiplicative Hankel forms.  Indeed, fix
\[
  \frac1{\sqrt2}<r<1
\]
and define
\[
  \varrho_n=
  \begin{cases}
    d_\alpha(2^k)r^k,& n=2^k,\quad k=0,1,2,\ldots,\\[1mm]
    0,& \text{otherwise}.
  \end{cases}
\]
For finitely supported sequences \(a,b\in\ell^2_{d_\alpha}(\N)\),
\[
\begin{aligned}
  \Hankel_{\varrho,d_\alpha}(a,b)
  &=\sum_{m,n\geq1}a_mb_n\frac{\varrho_{mn}}{d_\alpha(mn)}\\
  &=\sum_{j,k\geq0}a_{2^j}b_{2^k}
    \frac{\varrho_{2^{j+k}}}{d_\alpha(2^{j+k})}\\
  &=\sum_{j,k\geq0}a_{2^j}b_{2^k}r^{j+k}\\
  &=\left(\sum_{j\geq0}a_{2^j}r^j\right)
    \left(\sum_{k\geq0}b_{2^k}r^k\right).
\end{aligned}
\]
Thus \(\Hankel_{\varrho,d_\alpha}\) has rank one.  Moreover, the Cauchy--Schwarz inequality gives
\[
\begin{aligned}
  \left|\sum_{j\geq0}a_{2^j}r^j\right|
  &\leq\left(\sum_{j\geq0}\frac{|a_{2^j}|^2}{d_\alpha(2^j)}\right)^{1/2}
    \left(\sum_{j\geq0}d_\alpha(2^j)r^{2j}\right)^{1/2}\\
  &\leq\|a\|_{\ell^2_{d_\alpha}}
    \left(\sum_{j\geq0}d_\alpha(2^j)r^{2j}\right)^{1/2}.
\end{aligned}
\]
Since
\[
  \sum_{j=0}^{\infty}d_\alpha(2^j)z^j=(1-z)^{-\alpha},\qquad |z|<1,
\]
we have
\[
  \sum_{j\geq0}d_\alpha(2^j)r^{2j}=(1-r^2)^{-\alpha}<\infty.
\]
Consequently,
\[
  |\Hankel_{\varrho,d_\alpha}(a,b)|
  \leq(1-r^2)^{-\alpha}
  \|a\|_{\ell^2_{d_\alpha}}\|b\|_{\ell^2_{d_\alpha}},
\]
and hence \(\Hankel_{\varrho,d_\alpha}\) is a bounded rank-one weighted multiplicative Hankel form.

On the other hand, for \(n=2^k\), \(k\geq1\),
\[
\begin{aligned}
  \frac{\varrho_{2^k}}{\varrho_0(2^k)}
  &=\frac{\varrho_{2^k}}{d_\alpha(2^k)/(\sqrt{2^k}(\log 2^k)^\alpha)}\\
  &=r^k\sqrt{2^k}(k\log2)^\alpha\\
  &=(r\sqrt2)^k(k\log2)^\alpha\longrightarrow\infty.
\end{aligned}
\]
Thus the coefficients of a bounded weighted multiplicative Hankel form can be arbitrarily larger than
\[
  \frac{d_\alpha(n)}{\sqrt n\,(\log n)^\alpha}
\]
along a sparse subsequence. 
\end{remark}

The next theorem identifies the finite positive measures that generate bounded weighted multiplicative Hankel forms.

\begin{theorem}\label{thm:measure-generated-Helson}
Let \(\alpha>1\), and let \(\nu\) be a finite positive Borel measure on
\((1/2,\infty)\).  Define
\[
  \varrho(n)=d_\alpha(n)\int_{1/2}^\infty n^{-s}\dd\nu(s),
  \qquad n\geq1,
\]
and let \(\Hankel_{\varrho,d_\alpha}\) be the weighted multiplicative
Hankel form induced by \(\varrho\).  Then the following conditions are
equivalent.
\begin{enumerate}
\item The form \(\Hankel_{\varrho,d_\alpha}\) is a bounded bilinear form on
\(\ell^2_{d_\alpha}(\N)\).
\item There is a constant \(C>0\) such that
\[
  \nu\bigl((1/2,1/2+\varepsilon]\bigr)\leq C\varepsilon^\alpha,
  \qquad \varepsilon\longrightarrow0^+.
\]
\item There is a constant \(C>0\) such that
\[
  \varrho(n)\leq C\frac{d_\alpha(n)}{\sqrt n\,(\log n)^\alpha},
  \qquad n\geq2.
\]
\item The operator
\[
  (\mathcal K_{\alpha,\nu}f)(t)
  =\int_{1/2}^\infty\zeta(s+t)^\alpha f(s)\dd\nu(s),
  \qquad t>\frac12,
\]
is bounded on \(L^2(\nu)\).
\end{enumerate}

\end{theorem}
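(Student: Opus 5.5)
The starting point is the integral representation of the form. For finitely supported \(a,b\), put \(f(s)=\sum_m a_m m^{-s}\) and \(g(s)=\sum_n b_n n^{-s}\). Fubini gives
\[
  \Hankel_{\varrho,d_\alpha}(a,b)
  =\sum_{m,n\geq1}a_mb_n\int_{1/2}^\infty (mn)^{-s}\dd\nu(s)
  =\int_{1/2}^\infty f(s)g(s)\dd\nu(s).
\]
All four conditions therefore express, in different ways, that \(\nu\) is a Carleson measure for \(\A_\alpha^2\) on the real half-line \((1/2,\infty)\). I would prove the cycle \((1)\Rightarrow(2)\Rightarrow(3)\Rightarrow(1)\) and then, separately, \((1)\Leftrightarrow(4)\).

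\emph{\((1)\Rightarrow(2)\).} I test the form with reproducing kernels. For \(w=1/2+\varepsilon\), take \(f_w(s)=\zeta(s+w)^\alpha\). Its coefficients \(d_\alpha(n)n^{-w}\) are positive, and \(\|f_w\|_{\A_\alpha^2}^2=\zeta(1+2\varepsilon)^\alpha\asymp\varepsilon^{-\alpha}\). Boundedness of the form applied with \(a=b\) gives
\[
  \int_{1/2}^\infty f_w^2\dd\nu\leq C\zeta(1+2\varepsilon)^\alpha .
\]
On \((1/2,1/2+\varepsilon]\) we have \(s+w\leq1+2\varepsilon\), so \(f_w(s)\geq\zeta(1+2\varepsilon)^\alpha\) because \(\zeta\) is decreasing. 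Hence
\[
  \nu\bigl((1/2,1/2+\varepsilon]\bigr)\leq C\zeta(1+2\varepsilon)^{-\alpha}\lesssim\varepsilon^\alpha .
\]

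\emph{\((2)\Rightarrow(3)\).} Write \(n^{-s}=n^{-1/2}e^{-(s-1/2)\log n}\) and set \(G(\varepsilon)=\nu((1/2,1/2+\varepsilon])\). Integrating by parts in the distribution function gives
\[
  \int n^{-s}\dd\nu=n^{-1/2}\log n\int_0^\infty e^{-\varepsilon\log n}G(\varepsilon)\dd\varepsilon .
\]
Condition (2) gives \(G(\varepsilon)\leq C\varepsilon^\alpha\) for small \(\varepsilon\); finiteness of \(\nu\) bounds \(G\) by \(C\varepsilon^\alpha\) for all \(\varepsilon\), after enlarging \(C\). Therefore
\[
  \int n^{-s}\dd\nu\leq C\Gamma(\alpha+1)\,n^{-1/2}(\log n)^{-\alpha},
\]
which is (3) after multiplying by \(d_\alpha(n)\).

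\emph{\((3)\Rightarrow(1)\).} All entries \(\varrho(mn)/d_\alpha(mn)\) are nonnegative. For \(m,n\geq2\) they are dominated by \(C/(\sqrt{mn}(\log mn)^\alpha)\), which is the matrix of \(T_\alpha/\Gamma(\alpha)\) in the \(e_n\)-normalization. Boundedness of that block then follows from part~(1) of Theorem~\ref{thm:complete-spectrum-Ta}, or equivalently from criterion (1) of Remark~\ref{rem:borderline-weighted-form}, by entrywise domination of nonnegative matrices.

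The row and column with \(m=1\) or \(n=1\) must be treated separately. The corner entry is \(\nu(1/2,\infty)<\infty\). The row \(\bigl(\varrho(n)/d_\alpha(n)\bigr)_{n\ge2}\) must be a bounded functional on \(\ell^2_{d_\alpha}\), which amounts to
\[
  \sum_{n\geq2}d_\alpha(n)\,n^{-1}(\log n)^{-2\alpha}<\infty .
\]
This follows from \(\sum_{n\leq x}d_\alpha(n)\asymp x(\log x)^{\alpha-1}\) and partial summation, since the tail then behaves like \(\int^\infty L^{-\alpha-1}\dd L\). I expect this bookkeeping, together with justifying that entrywise domination by the nonnegative matrix \(M_{d_\alpha}(\varrho_0)\) transfers operator boundedness, to be the only delicate point of the cycle.

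\emph{\((1)\Leftrightarrow(4)\).} Define \(J:\A_\alpha^2\to L^2(\nu)\) by \(Jf=f|_{(1/2,\infty)}\), first on Dirichlet polynomials. Because \(\nu\geq0\) and the form is positive on sequences with nonnegative entries, (1) is equivalent to boundedness of \(J\). To see this, use \(|f(s)|\leq\sum_m|a_m|m^{-s}\) for real \(s\), so that \(\int|f|^2\dd\nu\leq\Hankel_{\varrho,d_\alpha}(|a|,|a|)\); conversely, Cauchy--Schwarz in \(L^2(\nu)\) bounds the bilinear form by \(\|Jf\|\|Jg\|\).

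Using the reproducing kernel \(K_w(s)=\zeta(s+\overline w)^\alpha\), one computes \(J^*h=\int\zeta(\cdot+t)^\alpha h(t)\dd\nu(t)\). This gives \(JJ^*=\mathcal K_{\alpha,\nu}\), first on bounded compactly supported \(h\), in the same way as \(N_\alpha N_\alpha^*=K_\alpha\) was computed in Theorem~\ref{thm:Ta-unitary-Ka}. Hence \(J\) is bounded if and only if \(\mathcal K_{\alpha,\nu}\) is bounded.

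The step I expect to need most care is the direction \((4)\Rightarrow(1)\). There I would argue via the nonnegative kernel: bound \(\langle\mathcal K_{\alpha,\nu}h,h\rangle=\|J^*h\|^2\) on a dense class of \(h\), deduce that \(J^*\) is bounded, and then extend by density to obtain boundedness of \(J\).
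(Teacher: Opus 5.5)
Your proposal is correct and matches the paper's proof in all essentials. The shared steps are: the reproducing-kernel test for $(1)\Rightarrow(2)$; integration by parts in the distribution function for $(2)\Rightarrow(3)$; domination by the bounded matrix of $T_\alpha$, together with $\sum_{n\geq2}d_\alpha(n)n^{-1}(\log n)^{-2\alpha}<\infty$, for $(3)\Rightarrow(1)$; and the $N^*N$/$NN^*$ factorization for $(1)\Leftrightarrow(4)$.

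The differences are organizational. Your cycle makes the paper's direct proof of $(3)\Rightarrow(2)$ (taking $n=\lfloor e^{1/\varepsilon}\rfloor$) unnecessary. Your explicit entrywise-domination argument and first-row/column bookkeeping spell out what the paper compresses into a single sentence.
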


\begin{proof}
We first prove the equivalence of (1) and (4).  Define
\[ N:c_{00}(\N)\subset\ell^2(\N)\longrightarrow L^2(\nu), \qquad (Na)(s)=\sum_{n\geq1}a_n\sqrt{d_\alpha(n)}\,n^{-s}. \]
Exactly as in the proof of Theorem~\ref{thm:Ta-unitary-Ka}, direct computation on the natural dense subspaces gives
\[ M_{d_\alpha}(\varrho)=N^*N, \qquad \mathcal K_{\alpha,\nu}=NN^*. \]
Thus one of these two operators is bounded if and only if the other is bounded, and in that case their norms are equal.  This proves the equivalence of (1) and (4).

We next prove that (1) implies (2).  For \(\varepsilon>0\), define
\[
  a^{(\varepsilon)}_n=\sqrt{d_\alpha(n)}\,n^{-1/2-\varepsilon},
  \qquad n\geq1.
\]
Then
\[
  \|a^{(\varepsilon)}\|_{\ell^2}^2
  =\sum_{n\geq1}d_\alpha(n)n^{-1-2\varepsilon}
  =\zeta(1+2\varepsilon)^\alpha,
\]
and
\[
  (Na^{(\varepsilon)})(s)
  =\zeta\left(s+\frac12+\varepsilon\right)^\alpha.
\]
For \(1/2<s\leq1/2+\varepsilon\), monotonicity of \(\zeta\) on
\((1,\infty)\) yields
\[
  \zeta\left(s+\frac12+\varepsilon\right)
  \geq\zeta(1+2\varepsilon).
\]
Consequently,
\[
  \zeta(1+2\varepsilon)^{2\alpha}
  \nu\bigl((1/2,1/2+\varepsilon]\bigr)
  \leq\|Na^{(\varepsilon)}\|_{L^2(\nu)}^2
  \leq\|M_{d_\alpha}(\varrho)\|\,
  \zeta(1+2\varepsilon)^\alpha.
\]
Since \(\zeta(1+2\varepsilon)\asymp\varepsilon^{-1}\) as
\(\varepsilon\downarrow0\), condition (2) follows.

We now prove the equivalence of (2) and (3).  Set
\[ F(t)=\nu\bigl((1/2,t]\bigr), \qquad t>\frac12. \]
Assume (2),
we have
\[ F(t)\leq C\left(t-\frac12\right)^\alpha, \qquad t>\frac12. \]
For \(n\geq2\), integrating by parts gives
\[ \int_{1/2}^\infty n^{-s}\dd\nu(s)=(\log n)\int_{1/2}^\infty n^{-s}F(s)\dd s. \]
.  Hence
\[
\begin{split}
\frac{\varrho(n)}{d_\alpha(n)}
&\leq C(\log n)\int_{1/2}^\infty n^{-s}\left(s-\frac12\right)^\alpha\dd s\\
&=\frac{C\Gamma(\alpha+1)}{\sqrt n\,(\log n)^\alpha}.
\end{split}
\]
This proves (3).

Conversely, assume (3).  For every \(\varepsilon>0\) and \(n\geq2\),
\[
\begin{split}
\frac{\varrho(n)}{d_\alpha(n)}
&=\int_{1/2}^\infty n^{-s}\dd\nu(s)\\
&\geq\int_{1/2}^{1/2+\varepsilon}n^{-s}\dd\nu(s)\\
&\geq n^{-1/2-\varepsilon}\nu\bigl((1/2,1/2+\varepsilon]\bigr).
\end{split}
\]
It follows that
\[ \nu\bigl((1/2,1/2+\varepsilon]\bigr)\leq n^{1/2+\varepsilon}\frac{\varrho(n)}{d_\alpha(n)}\leq C\frac{n^\varepsilon}{(\log n)^\alpha}. \]
Take
\(n=\lfloor e^{1/\varepsilon}\rfloor\)   which proves (2)
for small \(\varepsilon\).  

It remains to prove that (3) implies (1).  Recall that
\[
  \varrho_0(n)=\frac{d_\alpha(n)}{\sqrt n\,(\log n)^\alpha},
  \qquad n\geq2.
\]
By part~(1) of Theorem~\ref{thm:complete-spectrum-Ta}, the form induced by
\(\varrho_0\) on sequences supported in \(\N_{\geq2}\) is bounded.
Moreover,
\[
  \sum_{n\geq2}\frac{|\varrho_0(n)|^2}{d_\alpha(n)}
  =\sum_{n\geq2}\frac{d_\alpha(n)}{n(\log n)^{2\alpha}}<\infty,
\]
so \(\varrho_0\in\ell^2_{d_\alpha}(\N_{\geq2})\).  Together with condition (3), these two facts imply (1).
\end{proof}

We finally explain why the measure is required to be concentrated on \((1/2,\infty)\).  If \(\Hankel_{\varrho,d_\alpha}\) is bounded, then necessarily
\[
\varrho\in\ell^2_{d_\alpha}(\N).
\]
Indeed, taking the second argument of the form to be \(e_1\) gives
\[
\left|\sum_{n\geq1}a_n\frac{\varrho(n)}{d_\alpha(n)}\right|=\left|\Hankel_{\varrho,d_\alpha}(a,e_1)\right|\leq C\|a\|_{\ell^2_{d_\alpha}},
\]
and hence \(\sum_{n\geq1}|\varrho(n)|^2/d_\alpha(n)<\infty\).

Suppose, to the contrary, that \(\nu\) is not concentrated on \((1/2,\infty)\).  Then
\[
c:=\nu\bigl(( -\infty,1/2]\bigr)>0.
\]
Since \(\nu\) is positive, for every \(n\geq1\),
\[
\frac{\varrho(n)}{d_\alpha(n)}=\int n^{-s}\dd\nu(s)\geq\int_{(-\infty,1/2]}n^{-s}\dd\nu(s)\geq c n^{-1/2}.
\]
Consequently,
\[
\sum_{n\geq1}\frac{|\varrho(n)|^2}{d_\alpha(n)}\geq c^2\sum_{n\geq1}\frac{d_\alpha(n)}{n}=\infty.
\]
 For the compact weighted multiplicative Hankel form, we have the following Theorem.

\begin{theorem}
Let \(\alpha>1\), and let \(\nu\) be a finite positive Borel measure on
\((1/2,\infty)\).  Define
\[
  \varrho(n)=d_\alpha(n)\int_{1/2}^\infty n^{-s}\dd\nu(s),
  \qquad n\geq1,
\]
and let \(\Hankel_{\varrho,d_\alpha}\) be the weighted multiplicative
Hankel form induced by \(\varrho\).  Then the following conditions are
equivalent.
\begin{enumerate}
\item The form \(\Hankel_{\varrho,d_\alpha}\) is compact on
\(\ell^2_{d_\alpha}(\N)\).
\item $\nu\bigl((1/2,1/2+\varepsilon]\bigr)=o(\varepsilon^\alpha),
  \qquad \varepsilon\longrightarrow0^+.$

\item 
  $\varrho(n)=o(\frac{d_\alpha(n)}{\sqrt n\,(\log n)^\alpha}),
  \qquad n\longrightarrow\infty.$

\item The operator
\[
  (\mathcal K_{\alpha,\nu}f)(t)
  =\int_{1/2}^\infty\zeta(s+t)^\alpha f(s)\dd\nu(s),
  \qquad t>\frac12,
\]
is compact on \(L^2(\nu)\).
\end{enumerate}

\end{theorem}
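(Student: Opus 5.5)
We follow the scheme of the proof of Theorem~\ref{thm:measure-generated-Helson}, with little-$o$ in place of big-$O$. First, (1)$\Leftrightarrow$(4). We use the same operator $N$ with $(Na)(s)=\sum_n a_n\sqrt{d_\alpha(n)}\,n^{-s}$. Then $M_{d_\alpha}(\varrho)=N^*N$ and $\mathcal K_{\alpha,\nu}=NN^*$. For a bounded $N$, the operator $N^*N$ is compact iff $N$ is compact iff $NN^*$ is compact. Boundedness is guaranteed in all four conditions: each of (1)--(4) implies its bounded counterpart in Theorem~\ref{thm:measure-generated-Helson}, since compact operators are bounded and little-$o$ implies big-$O$. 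Next, (2)$\Leftrightarrow$(3) repeats the integration by parts argument. Write $F(t)=\nu((1/2,t])$. If $F(t)\le\varepsilon_0(t-1/2)^\alpha$ for $t<1/2+\delta$, split
\[
\frac{\varrho(n)}{d_\alpha(n)}=(\log n)\int_{1/2}^{1/2+\delta}n^{-s}F(s)\dd s+(\log n)\int_{1/2+\delta}^\infty n^{-s}F(s)\dd s .
\]
The first piece is at most $\varepsilon_0\Gamma(\alpha+1)n^{-1/2}(\log n)^{-\alpha}$. The second is at most $\nu(\mathbb R)\,n^{-1/2-\delta}$, which is $o(n^{-1/2}(\log n)^{-\alpha})$. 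Conversely, take $n=\lfloor e^{1/\varepsilon}\rfloor$ in the inequality $\nu((1/2,1/2+\varepsilon])\le n^{1/2+\varepsilon}\varrho(n)/d_\alpha(n)$.

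For (1)$\Rightarrow$(2), we use the normalized test vectors $b^{(\varepsilon)}=a^{(\varepsilon)}/\|a^{(\varepsilon)}\|$, with $a^{(\varepsilon)}_n=\sqrt{d_\alpha(n)}n^{-1/2-\varepsilon}$. These tend weakly to $0$ as $\varepsilon\downarrow0$, since each fixed coordinate is $O(\zeta(1+2\varepsilon)^{-\alpha/2})\to0$ and the norms are $1$. Compactness of $N$ therefore gives $\|Nb^{(\varepsilon)}\|_{L^2(\nu)}\to0$. The chain of inequalities from the proof of Theorem~\ref{thm:measure-generated-Helson} then becomes
\[
\zeta(1+2\varepsilon)^{\alpha}\nu((1/2,1/2+\varepsilon])\le\|Nb^{(\varepsilon)}\|^2=o(1).
\]
Since $\zeta(1+2\varepsilon)\asymp\varepsilon^{-1}$, this gives (2).

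The main step is (2)$\Rightarrow$(1) (or (3)$\Rightarrow$(1)). We split $\nu=\nu_\delta+\nu^\delta$, where $\nu_\delta=\nu|_{(1/2,1/2+\delta]}$, and correspondingly $\varrho=\varrho_\delta+\varrho^\delta$. Since the matrix $M_{d_\alpha}(\varrho)$ is linear in $\varrho$, it suffices to handle the two pieces. For the tail, $\nu^\delta$ lives on $[1/2+\delta,\infty)$, so $\varrho^\delta(n)\le d_\alpha(n)\nu(\mathbb R)n^{-1/2-\delta}$. In that case $N$ maps into $L^2(\nu^\delta)$ with Hilbert--Schmidt norm
\[
\sum_n d_\alpha(n)\int n^{-2s}\dd\nu^\delta(s)\le\zeta(1+2\delta)^\alpha\nu(\mathbb R)<\infty .
\]
Hence that piece is Hilbert--Schmidt, in particular compact. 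For the head, $\nu_\delta$ satisfies condition (2) of Theorem~\ref{thm:measure-generated-Helson} with constant $C(\delta)=\sup_{\varepsilon\le\delta}\nu((1/2,1/2+\varepsilon])\varepsilon^{-\alpha}$, which tends to $0$ as $\delta\to0$ by hypothesis. The obstacle is that Theorem~\ref{thm:measure-generated-Helson} is qualitative, so we need a quantitative bound $\|M_{d_\alpha}(\varrho_\delta)\|\lesssim C(\delta)$. We plan to obtain it from (3)$\Rightarrow$(1) there: the head satisfies $\varrho_\delta(n)\le C(\delta)\Gamma(\alpha+1)\varrho_0(n)$. Because the matrix entries are non-negative and dominated entrywise, the operator norm is monotone under entrywise domination. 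This yields
\[
\|M_{d_\alpha}(\varrho_\delta)\|\le C(\delta)\Gamma(\alpha+1)\,\|M_{d_\alpha}(\varrho_0)\|+\text{(the }m\text{ or }n=1\text{ row/column term)} .
\]
The boundary row and column are controlled by $\|\varrho_\delta\|_{\ell^2_{d_\alpha}}\lesssim C(\delta)$. Then $M_{d_\alpha}(\varrho)$ is a norm limit of compact operators as $\delta\to0$, which proves (1).
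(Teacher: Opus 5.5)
Your proof is correct. The paper omits this proof and only says it is analogous to Widom's Theorem 3.2, and your argument supplies exactly that Widom-type proof: the $N^*N$/$NN^*$ factorization gives (1)$\Leftrightarrow$(4), integration by parts gives (2)$\Leftrightarrow$(3), normalized weakly null kernel vectors give necessity, and for sufficiency you split $\nu$ into a head and a tail, where the tail makes $N$ Hilbert--Schmidt and the head has norm $O(C(\delta))$ by entrywise domination by the bounded $\varrho_0$-matrix on $\mathbb N_{\geq2}$ plus the first row and column. One point deserves explicit mention: $\nu_\delta((1/2,t])\le C(\delta)(t-1/2)^\alpha$ holds for all $t>1/2$, because for $t>1/2+\delta$ the left side equals $\nu((1/2,1/2+\delta])\le C(\delta)\delta^\alpha$, and this is what lets the integration-by-parts bound run with the same constant $C(\delta)$.
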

The proof is analogous to \cite[Theorem 3.2]{Widom1966} and is omitted.

\section*{Declaration of generative AI and AI-assisted technologies in the manuscript preparation process}

During the development of this work, the authors used OpenAI’s ChatGPT (GPT-5.6) as an auxiliary research tool. In the proof excluding non-zero eigenvalues from the essential spectrum, the key idea used to rule out the second alternative was suggested by GPT-5.6 during discussions with the authors. In particular, it suggested using strict total positivity to obtain the contradiction. The authors subsequently studied the relevant theory and verified the argument in detail. This also led to the use of strict total positivity and exterior powers in the proof that the eigenvalues above the essential spectrum are simple. All mathematical statements, proofs, references, and the final presentation were checked and revised by the authors, who take full responsibility for the content of the paper.

\bibliographystyle{amsplain}
\bibliography{ref}

\end{document}